\numberwithin{equation}{section}
\newtheorem{thm}{Theorem}[section]
\newtheorem*{Theorem*}{Theorem}
\newtheorem{cor}[thm]{Corollary}
\newtheorem{lem}[thm]{Lemma}
\newtheorem{prop}[thm]{Proposition}
\newtheorem{theoremalpha}{Theorem}
\theoremstyle{definition}
\newtheorem{Example}[thm]{Example}
\newtheorem{rem}[thm]{Remark}
\DeclareMathOperator{\curv}{curv}
\DeclareMathOperator{\diam}{diam}
\DeclareMathOperator{\Iso}{Isom}
\DeclareMathOperator{\img}{Image}
\DeclareMathOperator{\codim}{codim}
\DeclareMathOperator{\Home}{\mathrm{Homeo}}
\DeclareMathOperator{\vol}{vol}
\DeclareMathOperator{\rad}{rad}
\DeclareMathOperator{\Pol}{Pol}
\newcommand{\N}{\mathbb{N}} 
\newcommand{\R}{\mathbb{R}} 
\newcommand{\Z}{\mathbb{Z}} 
\newcommand{\C}{\mathbb{C}} 
\newcommand{\Sp}{S} 
\newcommand{\RP}{\mathbb{R}P} 
\newcommand{\Id}{\mathrm{Id}}
\newcommand{\Tau}{\mathrm{T}}
\newcommand{\prin}{\mathrm{prin}}
\newcommand{\fol}{\mathcal{F}}
\newcommand{\p}{\pi}
\newcommand{\Rho}{\mathrm{P}}
\newcommand\restr[2]{{
		\left.\kern-\nulldelimiterspace 
		#1 
		\right|_{#2} 
}}
\begin{document}
\allowdisplaybreaks

\newcommand{\arXivNumber}{2407.03534}

\renewcommand{\PaperNumber}{106}

\FirstPageHeading
	
\ShortArticleName{Myers--Steenrod Theorems for Metric and Singular Riemannian Foliations}
	
\ArticleName{Myers--Steenrod Theorems for Metric\\ and Singular Riemannian Foliations}
	
\Author{Diego CORRO~$^{\rm ab}$ and Fernando GALAZ-GARC\'IA~$^{\rm c}$}
	
\AuthorNameForHeading{D.~Corro and F.~Galaz-Garc\'ia}
	
\Address{$^{\rm a)}$~Fakult\"at f\"ur Mathematik, Karlsruher Institut f\"ur Technologie, Germany}

\Address{$^{\rm b)}$~School of Mathematics, Cardiff University, UK}
\EmailD{\mail{diego.corro.math@gmail.com}}
\URLaddressD{\url{https://www.diegocorro.com/}}
	
\Address{$^{\rm c)}$~Department of Mathematical Sciences, Durham University, UK}
\EmailD{\mail{fernando.galaz-garcia@durham.ac.uk}}
\URLaddressD{\url{https://www.durham.ac.uk/staff/fernando-galaz-garcia/}}
	
\ArticleDates{Received November 04, 2024, in final form December 01, 2025; Published online December 16, 2025}	

\Abstract{We prove that the group of isometries preserving a metric foliation on a closed Alexandrov space $X$ is a closed subgroup of the isometry group of $X$. We obtain a sharp upper bound for the dimension of this subgroup and show that, when equality holds, the foliations that realize this upper bound are induced by fiber bundles whose fibers are round spheres or projective spaces. As a corollary, singular Riemannian foliations that realize the upper bound are induced by smooth fiber bundles whose fibers are round spheres or projective spaces.}

\Keywords{Alexandrov space; submetry; isometry group; singular Riemannian foliation; Lie group}
\Classification{53C12; 53C20; 53C21; 53C23; 53C24; 51K10}
	
\section{Main results}
The Myers--Steenrod theorem \cite{MyersSteenrod} states that the isometry group of a Riemannian $n$-manifold is a Lie group whose dimension is at most $n(n+1)/2$. When the manifold is compact, its isometry group must also be compact, as established by van Dantzig and van der Waerden~\cite{van_Dantzig_1928}. The application of the theory of compact transformation groups in Riemannian geometry \mbox{\cite{AlexandrinoBettiol2015,Bredon,Grove2002,Kobayashi}} is grounded on these two fundamental results, which also hold for other classes of metric spaces, such as Finsler manifolds \cite{DengHour2002}, Alexandrov spaces \cite{FukayaYamaguchi1994, GalazGarciaGuijarro2013} or $\mathrm{RCD}$ spaces \cite{GuijarroSantos2018, Sosa2018}.

Singular Riemannian foliations generalize both isometric compact Lie group actions and Riemannian submersions, which induce decompositions into embedded submanifolds of lower dimension, and represent a generalized notion of symmetry on Riemannian manifolds \cite{AlexandrinoRadeschi2017, Corro, CorroMoreno2022, GalazGarciaRadeschi2015, Moreno2019}. Not all singular Riemannian foliations stem from Lie group actions
(see, for example,~\cite{Radeschi2014}). Nevertheless, certain isometries of a Riemannian manifold $M$ with a singular Riemannian foliation~$\fol$ may induce residual symmetry by preserving the foliation's leaves. These \emph{foliated isometries} generate isometries of the leaf space $M/\fol$. Our first main result is an analog of the Myers--Steenrod theorem for the group of foliated isometries of a Riemannian manifold with a~singular Riemannian foliation.

\begin{theoremalpha}\label{T:MAIN_THM_SRF}
		Let $(M,\fol)$ be a singular Riemannian foliation with closed leaves on a complete connected Riemannian manifold. Then the following assertions hold:
		\begin{enumerate}[label=$(\roman*)$]\itemsep=0pt
			\item\label{C:MAIN_THM_i} The group $\Iso(M,\fol)$ of foliated isometries is a Lie group and is compact if $M$ is compact.
			\item\label{C:MAIN_THM_ii} If $M$ has dimension $n\geq 1$ and $\fol$ has codimension $0\leq k\leq n$, then
				\begin{equation}
				\dim(\Iso(M,\fol))\leq\frac{k(k+1)}{2} + \frac{(n-k)(n-k+1)}{2}.\tag{\emph{a}}\label{EQ:DIM_BOUND}		
				\end{equation}
			\item\label{C:MAIN_THM_iii} If equality holds in inequality~\eqref{EQ:DIM_BOUND}, then $M$ is foliated-diffeomorphic to a fiber bundle $F\to M\to B$, where $B$ is diffeomorphic to $\R^k$, $\RP^k$, or $S^k$, and the fibers are round spheres or real projective spaces, Euclidean space, or a hyperbolic space. Moreover, when equipped with the leaf-projection metric induced by $M$, the base space $B$ is isometric to $k$-dimensional Euclidean space, hyperbolic space with constant negative sectional curvature, a round real projective space, or a round sphere. 	
		\end{enumerate}
	\end{theoremalpha}

The leaf space $M/\fol$ of a singular Riemannian foliation $(M,\fol)$ with closed leaves has a~natural distance function that makes $M/\fol$ a locally compact length space. The curvature of~$M/\fol$ is locally bounded from below in the triangle comparison sense, which links the geometry of the leaf space with that of the manifold. This fact plays a central role in proving Theorem~\ref{T:MAIN_THM_SRF}.
For an isometric action of a compact Lie group $G$ on a complete Riemannian manifold $M$ with sectional curvature uniformly bounded from below by $k\in \R$, the orbit space $M/G$ equipped with the orbital distance function is an Alexandrov space with curvature bounded below by $k$. Imposing further conditions, such as positive or non-negative sectional curvature on $M$, leads to significant constraints on the manifold's topology and has been an active research topic in Riemannian geometry \cite{Grove2002,Grove2017,Wilking2007}.
Note that the orbit projection map $\pi\colon M\to M/G$ is a~proper \emph{submetry}, i.e., for every $p\in M$, any closed metric ball $B(p,r)$ of radius $r$ centered at $p$ maps onto the metric ball $B(\pi(p),r)$ in $M/G$. Submetries, introduced by Berestovskii in \cite{Berestovskii1987} as a metric generalization of Riemannian submersions, have been the focus of systematic study in metric geometry \cite{BeretovskiiGuijarro2000,GuijarroWalschap2011,KapovitchLytchak2022,Lytchak,Lytchak2024,LytchakWilking2024}.

Motivated by the preceding considerations, we also investigate the foliated isometries of the metric foliations $\fol$ whose leaves are the fibers of a submetry $\pi\colon X\to Y$ between Alexandrov spaces. Each leaf $\pi^{-1}(y)$ is closed, since it is the inverse image of the closed set $\{y\}$ under the continuous map $\pi$.
We consider the group $\Iso(X,\fol)$ of foliated isometries and obtain the following analog of Theorem~\ref{T:MAIN_THM_SRF}. For parts~(ii)--(iii), we assume $\pi$ is proper, in which case every fiber is compact.

	\begin{theoremalpha}\label{T:MAIN_THM}
		Let $\pi\colon X\to Y$ be a submetry between connected Alexandrov spaces and set $\fol = \big\{\p^{-1}(y)\mid y\in Y\big\}$. Then the following assertions hold:
		\begin{enumerate}[label=$(\roman*)$]\itemsep=0pt
			\item\label{T:MAIN_THM_i} The group $\Iso(X,\fol)$ of foliated isometries of $(X,\fol)$ is a Lie group and is compact if $X$ is compact.
 \end{enumerate}
 Assume further that $\pi$ is proper for parts $(ii)$ and $(iii)$.
 \begin{enumerate}[label=$(\roman*)$]\itemsep=0pt
 \setcounter{enumi}{1}
			\item\label{T:MAIN_THM_ii}
 If $X$ has dimension $n\geq 1$ and $Y$ has dimension $0\leq k \leq n$, then
			\begin{equation}
				\dim(\Iso(X,\fol))\leq\frac{k(k+1)}{2} + \frac{(n-k)(n-k+1)}{2}.\tag{\emph{b}}\label{EQ:DIM_BOUND_ALEX}
			\end{equation}
\item\label{T:MAIN_THM_iii}If equality holds in inequality~\eqref{EQ:DIM_BOUND_ALEX}, then
$\pi$ is the composition of a submetry $\tilde{\pi}\colon X\to Z$ with connected fibers, and a submetry $\pi_D\colon Z\to Y$ whose fibers are finite discrete spaces. The spaces $X$, $Y$, $Z$ are Riemannian manifolds, $X$ is homogeneous, and the submetry $\tilde{\pi}\colon {X\to Z}$ is a smooth Riemannian submersion. Moreover, the conclusions of Theorem~\ref{T:MAIN_THM_SRF}\,\ref{C:MAIN_THM_iii} hold for the foliation induced by $\tilde{\pi}$. In general the fibers of $\pi$ are a finite disjoint union of round spheres or projective spaces, the base space $Y$ is isometric to the $k$-dimensional Euclidean space, hyperbolic space with constant negative sectional curvature, a round real projective space, or a round sphere.
More specifically, when $Y$ is homeomorphic to $\R^k$ for $k\geq0$, or $Y$ is homeomorphic to $\Sp^k$ for $k\geq 2$, then the fibers of $\pi$ are connected. When $Y$ is homeomorphic to $\RP^k$ for $k\geq 2$, the fibers of $\pi$ have at most two connected components. When $Y$ is homeomorphic to $\Sp^1$, then the fibers of $\pi$ can have $m\geq 1$ connected components. 	
\end{enumerate}
	\end{theoremalpha}

The upper bound for the dimension of the group of foliated isometries in Theorems~\ref{T:MAIN_THM_SRF} and \ref{T:MAIN_THM} consists of two summands. The first summand, $k(k+1)/2$, bounds the dimension of the image of the Lie group morphism $\Psi\colon \Iso(M,\fol)\to \Iso(M/\fol)$ induced by the projection $\pi$ (see Section~\ref{s:foliated.maps}).
This image consists of the isometries of $M$ that descend to non-trivial isometries of the leaf space. The second summand, $(n-k)(n-k+1)/2$, bounds the dimension of the group of foliated isometries $h$ that leave the leaves invariant, i.e., $h(L)\subset L$ for any leaf $L\in \fol$.

Applying Theorem~\ref{T:MAIN_THM_SRF} to the trivial foliations $\fol = \{M\}$ or $\fol = \{{p}\mid p\in M\}$ consisting, respectively, of a single leaf or leaves that contain only one point, yields the classical upper bound~${n(n+1)/2}$ in the Riemannian Myers--Steenrod theorem. For Alexandrov spaces, Theorem~\ref{T:MAIN_THM} applied to the trivial submetries $\mathrm{id}\colon X \to X$ and $\pi\colon X \to \{\mathrm{pt}\}$ yields the bound $n(n+1)/2$ in the Myers--Steenrod theorem for Alexandrov spaces in \cite{GalazGarciaGuijarro2013}.

As in the Myers--Steenrod theorem for the setting of Riemannian, Alexandrov, and $\mathrm{RCD}$-spaces, the upper bound in Theorems~\ref{T:MAIN_THM_SRF} and \ref{T:MAIN_THM} is sharp. Let $(M,g)$ be the Riemannian product~${(N\times P, g_1\times g_2)}$ of two Riemannian manifolds $\bigl(N^{n-k},g_1\bigr)$, $\bigl(P^k,g_2\bigr)$ isometric to round spheres or round projective spaces
with $k,n-k\geq 1$,
and the Riemannian foliation $\fol$ whose leaves are $N$. Then $\dim(\Iso(M,g)) =\dim( \Iso(N,g_1))+\dim(\Iso(P,g_2))$ (see \cite[Corollary 1]{EschenburgHeintze1998}), and thus $\dim(\Iso(M,\fol)) = \dim(\Iso(M))$. This realizes the upper bound in Theorem~\ref{T:MAIN_THM_SRF}\,\eqref{EQ:DIM_BOUND} and Theorem~\ref{T:MAIN_THM}\,\eqref{EQ:DIM_BOUND_ALEX}.

As a nontrivial bundle example, consider the Hopf fibration $\fol$ given by the orbits of the scalar multiplication action of $S^1\subset \C$ on the unit sphere $S^3\subset \C^2$.
Since this $S^1$-action commutes with the standard $\mathrm{U}(2)$-action on $S^3$, we have $\mathrm{U}(2)\subset \Iso\bigl(S^3,\fol\bigr)$.
By Theorem~\ref{T:MAIN_THM_SRF}\,\ref{C:MAIN_THM_ii},
\[
\dim \mathrm{U}(2) = 4\leq \dim \bigl(\Iso\bigl(S^3,\fol\bigr)\bigr)\leq 4.
\]
Hence, $\Iso\bigl(S^3,\fol\bigr)$ has maximal dimension while the foliation is given by a nontrivial fiber bundle.

When equality holds in inequality~\eqref{EQ:DIM_BOUND}, $M$ is a fiber bundle $F\to M\to B$ and we may assume that the leaves $F$ carry a metric of constant sectional curvature equal to one. The main theorems in \cite{CorroGuentherGarciaKordass2020,FarrellGangKnopfOntaneda2017} imply that, when $B$ is not diffeomorphic to $\R^k$, the bundle has a linear structure group. In particular, when the leaves are diffeomorphic to $S^{n-k}$, the classification of such bundles is the same as the classification of vector bundles over $B$. In the case when the leaves are diffeomorphic to $\RP^{n-k}$, in some cases depending on the values of $k$, we can show that the conditions of \cite[Theorem 1]{BeckerGottlieb1973} hold, and thus the classification reduces to the classification of vector bundles over $B$. In general when the leaves are diffeomorphic to $\RP^{n-k}$, by \cite{CorroGuentherGarciaKordass2020}, the structure group is the so-called \emph{projective orthogonal group} $\mathrm{PO}(n-k)=\mathrm{O}(n-k)/\{\mathrm{Id},-\mathrm{Id}\}$, and thus the classification corresponds to $[B,B\mathrm{PO}(n-k)]$, the collection of maps from $B$ to the classifying space $B\mathrm{PO}(n-k)$ up to homotopy.

When $B$ is diffeomorphic to $\R^k$, the manifold $M$ is diffeomorphic to $\R P^{n-k}\times \R^k$ or $S^{n-k}\times \R^k$, since $\R^k$ is contractible. Observe that in Theorem~\ref{T:MAIN_THM}\,\ref{T:MAIN_THM_iii} in the case when $Y$ is homeomorphic to $\R^k$, we conclude that $X$ is homeomorphic to $S^{n-k}\times\R^k$ or $\RP^{n-k}\times\R^k$.

The map $\Psi\colon\Iso(M,\fol)\to \Iso(M/\fol)$ is generally not surjective. However, if $M$ is an $n$-dimensional Euclidean vector space with a foliation $\fol$ induced by a linear isometric action by a~compact Lie group $G$, any isometry in $\Iso(M/G)^0$ lifts to a foliated isometry in~$\Iso(M,G)$~\cite{Mendes2020}. Here, $\Iso(M/G)^0$ denotes the connected component containing the identity. The proof of Theorem~\ref{T:MAIN_THM} yields an upper bound on the dimension of the orbits of $G$ in terms of the dimension of $M/G$. Specifically, if $M/G$ has dimension $0<k<n$, then an orbit has dimension at most~${(n-k)(n-k+1)/2}$.

\begin{rem}
We point out that in the proof of Theorem~\ref{T:MAIN_THM_SRF}, we only use the transnormal properties of the closed foliation. Nonetheless, we state our results for singular Riemannian foliations, as there are no known examples of transnormal systems that are not smooth foliations, and it is conjectured that any transnormal system is a singular Riemannian foliation (see \cite[Final remarks]{Wilking2007}).
See also the work of Lytchak and Wilking \cite{LytchakWilking2024} for the smoothness of submetries between Riemannian manifolds.
\end{rem}

\begin{rem}
In the proof of Theorems~\ref{T:MAIN_THM_SRF}\,\ref{C:MAIN_THM_i} and \ref{T:MAIN_THM}\,\ref{T:MAIN_THM_i}, we crucially rely on the assumption that the leaf and fiber spaces are Hausdorff (see, for example, the proof of Proposition~\ref{P: Iso(Y) closed implies Iso(X,F) closed}).
To prove Theorem~\ref{T:MAIN_THM_SRF}\,\ref{C:MAIN_THM_ii}, we require the leaf space to be an Alexandrov space. Both conditions are satisfied if the singular Riemannian foliation has closed leaves. However, this requirement can be relaxed by asking that the leaves of the foliation be globally equidistant instead of just locally equidistant.
\end{rem}

\begin{rem}
We do not know if the upper bound on the dimension stated in Theorem~\ref{T:MAIN_THM_SRF}\,\ref{C:MAIN_THM_ii} holds for singular Riemannian foliations without closed leaves. Even if such a bound exists, we cannot obtain a rigidity conclusion as in Theorem~\ref{T:MAIN_THM_SRF}\,\ref{C:MAIN_THM_iii}. This is illustrated by the irrational flow on the $2$-dimensional flat torus: the group of foliated isometries has dimension two, which is the dimension of the isometry group of the $2$-dimensional flat torus. In contrast, when we assume that the leaves are closed and the singular Riemannian foliation is not trivial, Theorem~\ref{T:MAIN_THM_SRF}\,\ref{C:MAIN_THM_iii} implies that the foliation is given by a circle bundle over the circle.
\end{rem}

\begin{rem}
In Theorem~\ref{T:MAIN_THM}\,\ref{T:MAIN_THM_iii}, there are examples of foliations with disconnected fibers. Namely, consider the product of a round $2$-sphere with a round $3$-sphere, and consider the $3$-sphere as leaves. As remarked above, the group of foliated isometries has maximal dimension. Now consider the antipodal action of $\Z_2$ on $S^2$. With this, we get a fibration of $S^2\times S^3$ over~$\R P^2$ whose fibers are two disjoint copies of a round $S^3$. The difference with Theorem~\ref{T:MAIN_THM_SRF} is that we ask the leaves of a singular Riemannian foliation to be connected.
\end{rem}

\begin{rem}
\label{R:classification.problem}
 We note a natural classification problem: \emph{classify, up to foliated isometry, all pairs~$(X,\fol)$ with $\Iso(X,\fol)$ of maximal dimension.} Our results provide initial structure constraints for this question.
\end{rem}

Our article is organized as follows. Section~\ref{S:PRELIMINARIES} presents basic material on singular Riemannian foliations, Alexandrov spaces, and submetries, as well as auxiliary results used in the proofs of our main theorems. Section~\ref{S: proof of Main corollary} contains the proof of Theorem~\ref{T:MAIN_THM_SRF}. Finally, in Section~\ref{S:PROOF_MAIN_THM}, we prove Theorem~\ref{T:MAIN_THM}.

\section{Preliminaries}
\label{S:PRELIMINARIES}

In this section, we collect basic definitions and results on singular Riemannian foliations, Alexandrov spaces, and submetries we will use in the proof of our main theorems. We refer the reader to \cite{AlexandrinoBriquetToeben2013,BuragoBuragoIvanov,GromollWalschap2009,KapovitchLytchak2022,Lytchak} for further basic results on these subjects. We will assume all spaces are connected, unless stated otherwise.

\subsection{Singular Riemannian foliations}
\label{S: Singular Riemannian foliations}
A \emph{singular Riemannian foliation} $(M,\fol)$ on a complete Riemannian manifold $M$ is a partition of the manifold into a collection $\fol = \{L_x\mid x\in M\}$ of disjoint connected, complete, immersed submanifolds $L_x$, called \emph{leaves}, satisfying the following conditions:
\begin{enumerate}\itemsep=0pt
\item[(i)] If $\gamma\colon [a,b] \to M$ is a geodesic perpendicular to the leaf $L_{\gamma(a)}$, then $\gamma$ is perpendicular to~$L_{\gamma(t)}$ for all $t\in [a,b]$.
\item[(ii)] For each $p\in M$, there exists local smooth vector fields spanning the tangent spaces of the leaves.	
\end{enumerate}

We call any leaf of maximal dimension a \emph{regular} leaf; leaves that are not regular are called \emph{singular}. Given any Riemannian manifold $M$, the foliation consisting of one leaf $\fol = \{M\}$ and the foliation where each leaf consists of just a point $\fol = \{\{x\}\mid x\in M\}$ are trivial examples of singular Riemannian foliations.
Other examples of singular Riemannian foliations are given by the partition of a complete Riemannian manifold $M$ by the orbits of an isometric action of a~compact Lie group. The partition of $M$ into the fibers of a Riemannian submersion~${f\colon M\to N}$ yields a further example of a singular Riemannian foliation. Note that there are infinitely many examples of singular Riemannian foliations which are not given by group actions nor from Riemannian submersions \cite{FerusKarcherMuenzner1981,Radeschi2014}.
	
Let $(M,\fol)$ be a singular Riemannian foliation. Then we have a singular distribution $H\subset M$, called the \emph{horizontal distribution}, given by setting $H_x = \nu_x L_x$, the normal tangent space to the leaf $L_x$ at $x$. The \emph{codimension of the foliation}, denoted by $\codim (\fol)$, is the codimension of any regular leaf in $M$. We say $(M,\fol)$ is \emph{closed} if all leaves are closed in $M$.
The \emph{leaf space} of the foliation is the set of equivalence classes $M^\ast = M/\fol$ equipped with the quotient topology. We have a natural projection map $\pi\colon M\to M^\ast$ which is continuous with respect to the quotient topology. Given a subset $A\subset M$, we let $A^\ast = \pi(A)$.

Let $(M,\fol)$ be a closed singular Riemannian foliation. Fix $x\in M$ and consider the normal space $\nu_x L_x$ to the tangent space $T_xL_x\subset T_x M$ at $x$.
Next, for $\varepsilon > 0$ sufficiently small, define~${\nu_x^\varepsilon L_x = (\nu_x L_x) \cap B_{\varepsilon}(0)}$, where $B_{\varepsilon}(0)$ is the closed ball of radius $\varepsilon$ in $T_x M$.
Set $S_x = \exp_{x}(\nu_x^\varepsilon L_x)$.
The intersection of the leaves of $\fol$ with $S_x$ induces a foliation $\fol |_{S_x}$ on $S_x$ whose leaves are the connected components of the intersection between the leaves of $\fol$ and $S_x$.
Although $\fol|_{S_x}$ may not be a singular Riemannian foliation with respect to the induced metric of~$M$ on $S_x$ (the leaves of $\fol|_{S_x}$ may not be equidistant with respect to the induced metric), the pull-back foliation~${\fol^x = \exp_x^{\ast}(\fol|_{S_x})}$ is a singular Riemannian foliation on $\nu_x^\varepsilon L_x$ equipped with the Euclidean metric $g^\perp_x:=(g_x)|_{\nu_x L_x}$ (see \cite[Proposition~6.5]{Molino}).
The foliation $(\nu_x^\varepsilon L_x, \fol^x)$ is called the \emph{infinitesimal foliation at $x$}.

The infinitesimal foliation $(\nu_x^\varepsilon L_x, \fol^x)$ is invariant under homotheties fixing the origin (see~\cite[Lemma~6.2]{Molino}). Furthermore, the origin $\{0\}\subset \nu_x^\varepsilon L_x$ is a leaf of the infinitesimal foliation.
Since the leaves of $\fol^x$ are equidistant, the origin being a leaf implies that any leaf of $\fol^x$ is at a~constant distance from $\{0\}$. Therefore, each leaf of the infinitesimal foliation is contained in a~round sphere centered at the origin.
Hence, we may consider the infinitesimal foliation restricted to the unit normal sphere of $\nu_x L_x$, denoted by $S^{\bot}_x$, resulting in a foliated round sphere $\bigl(S^{\bot}_x, \fol^x\bigr)$ with respect to the standard round metric of $S^{\bot}_x$. This foliation is also called the \emph{infinitesimal foliation}. Henceforth, when referring to the ``infinitesimal foliation", we will mean $\bigl(S^{\bot}_x, \fol^x\bigr)$. Note that there is no loss of generality in doing so, since $(\nu_x L_x, \fol^x)$ is invariant under homothetic transformations and thus one may recover it from $\bigl(S^{\bot}_x, \fol^x\bigr)$.

Let $L$ be a closed leaf of a singular Riemannian foliation $(M, \mathcal{F})$,
and $\gamma\colon [0,1] \to L$ a piecewise smooth curve with $\gamma(0) = x $. By \cite{Mendes2020}, there exists a continuous map
$G\colon [0,1] \times \nu_p L \to \nu L$
such that
\begin{enumerate}\itemsep=0pt
 \item[(a)] \( G(t, v) \in \nu_{\gamma(t)} L \) for every \( (t, v) \in [0,1] \times \nu_x L \).
 \item[(b)] For every \( t \in [0,1] \), the restriction
\smash{$
 G|_{\{t\} \times \nu_x L}\colon\ \nu_x L \to \nu_{\gamma(t)} L
$}
 is a linear isometry preserving the leaves of \( \nu L \).
 \item[(c)] For every \( s \in \mathbb{R} \), \( \exp_{\gamma(t)} (sG(t, v)) \) belongs to the same leaf as \( \exp_x (sv) \).
\end{enumerate}

We denote by $\mathrm{O}\bigl(S^\perp_x,\fol^x\bigr)$ the group of \emph{foliated isometries} of the infinitesimal foliation, i.e., the isometries which preserve the infinitesimal foliation.
For each loop $\gamma$ at $x$, the map $G_\gamma\colon {\nu_x L\to \nu_x L}$ given by $G_\gamma(v) = G(1,v)$ is a foliated linear isometry (see \cite[Corollary~15]{MendesRadeschi2019}).
Therefore, we have a group homomorphism $\rho\colon\Omega(L,x)\to \mathrm{O}\bigl(S^\perp_x,\fol^x\bigr)$ from the loop space of $L_x$ at $x$ to the foliated isometries of the infinitesimal foliation by setting $\rho(\gamma) = G_\gamma$.

An isometry in $\mathrm{O}\bigl(S^\perp_x,\fol^x\bigr)$ may map a leaf to a different leaf. By $\mathrm{O}(\fol^{x})$ we denote the foliated isometries preserving the foliation, i.e., isometries $f\in \mathrm{O}\bigl(S^\perp_x,\fol^x\bigr)$ such that for any leaf~$\mathcal{L}$ of~$\bigl(S^\perp_x,\fol^x\bigr)$, we have $f(\mathcal{L})\subset \mathcal{L}$. The natural action of~$\mathrm{O}\bigl(S^\perp_{x},\fol^x\bigr)$ on the quotient~$\Sp^\perp_x/\fol^x$ has kernel~$\mathrm{O}(\fol^x)$.
Using the fact that if two loops $\gamma_1$ and $\gamma_2$ based at $x$ are homotopic, then~$G_{\gamma_1}^{-1}\circ G_{\gamma_2}$ is in the kernel of the action of $\mathrm{O}\bigl(\Sp^\perp_x,\fol^x\bigr)$ on $\Sp^\perp_x/\fol^x$, one may show that there is a well-defined group homomorphism
\[
	\rho\colon\ \pi_1(L,x)\to \mathrm{O}\bigl(\Sp^\perp_x,\fol^x\bigr)/\mathrm{O}(\fol^x),
\]
given by $\rho[\gamma] = [G_\gamma]$ (see, for example, \cite[Lemma 2.4 and Proposition 2.5]{Corro}). We define the \emph{holonomy of the leaf} $L$ as the image $\Gamma_L< \mathrm{O}\bigl(\Sp^\perp_x,\fol^x\bigr)/\mathrm{O}(\fol^{x})$ of $\pi_1(L,x)$ under the homomorphism $\rho$. When we consider the holonomy of a leaf $L_x$ through a point $x\in M$, we denote it by~$\Gamma_x$. A regular leaf $L$ is a \emph{principal leaf} if its holonomy group is trivial, and \emph{exceptional} otherwise. The set $M_\prin\subset M$ consisting of the union of principal leaves is an open and dense subset of $M$ (see for example \cite[Proposition 2.8]{Corro}).

\subsection{Alexandrov spaces}
\label{S: Alexandrov spaces}
An \emph{Alexandrov space $(X, d)$ with curvature bounded below by $k\in \R$} is a complete length space of finite Hausdorff dimension with curvature bounded below in the triangle comparison sense. Namely, for each $x\in X$, there is an open neighborhood $U\subset X$ of $x$, such that, for each geodesic triangle $\triangle$ contained in $U$, there exists a geodesic triangle $\widetilde{\triangle}$ in $M^2_k$, the $2$-dimensional model space of constant sectional curvature $k$, with edges having the same lengths as the edges of $\triangle$ satisfying the following condition: If $y$ is a vertex of $\triangle$, $\widetilde{y}$ is the corresponding vertex in~$\widetilde{\triangle}$, and~$w$ is any point in the opposing edge in~$\triangle$ with corresponding point~$\widetilde{w}$ in the opposing edge in~$\widetilde{\triangle}$, then
$
	d(y,w) \geq d_{M^2_k}(\widetilde{y},\widetilde{w})$.
As is customary, we will abbreviate \emph{curvature bounded below by $k$} by writing $\curv\geq k$. A~complete Riemannian manifold $M$ with a uniform lower bound for the sectional curvature is an example of an Alexandrov space.

Let $(X,d)$ be a metric space and fix three points $x,y,z\in X$. We define the \emph{comparison angle~$\measuredangle(yxz)$ at $x$} as
\[
	\measuredangle (y,x,z) = \arccos\left(\frac{d(x,y)^2+d(x,z)^2-d(y,z)^2}{2d(x,y)d(x,z)} \right).
\]
Now consider two continuous curves $c_1\colon [0,1]\to X$ and $c_2\colon [0,1]\to X$ with $c_1(0) = c_2(0) = {x\in X}$. We define the \emph{angle between $c_1$ and $c_2$} as
\[
	\angle (c_1,c_2) = \lim_{s,t\to 0}	\measuredangle (c_1(s),x,c_2(t)),
\]
provided the limit exists. When $X$ is an Alexandrov space and the curves $c_1$ and $c_2$ are geodesics, the angle $\angle (c_1,c_2)$ exists (see \cite[Proposition 4.3.2]{BuragoBuragoIvanov}).

Given two geodesics $c_1\colon [0,1]\to X$ and $c_2\colon [0,1]\to X$ in an Alexandrov space $X$ with common start point $x\in X$, we say that $c_1$ is \emph{equivalent} to $c_2$ if $\angle (c_1,c_2) =0$. Let $\widetilde{\Sigma}_x(X)$ the set of equivalence classes of geodesics starting at $x$. We define a metric on this set by setting the distance between two classes to be the angle formed between any two representatives of each class. The \emph{space of directions $\Sigma_x(X)$ of $X$ at $x$} is the metric completion of $\widetilde{\Sigma}_x(X)$. By \cite[Corollaries 7.10 and 7.11]{BuragoGromovPerelman1992}, the metric space $\Sigma_x(X)$ is an Alexandrov space of $\curv \geq 1$.

By \cite[Theorem~10.4.1]{BuragoBuragoIvanov}, any Alexandrov space $\Sigma$ with $\curv \geq 1$ has $\diam(\Sigma)\leq \pi$. Let~$(X,d)$ be a metric space with $\diam(X)\leq \pi$. The \emph{Euclidean cone over $X$}, denoted by $CX$, is the set~${X\times [0,\infty)/(x,0)\sim (y,0)}$ equipped with the metric
\[
	d_C([x,t],[y,s]) = \sqrt{t^2+s^2-2ts\cos(d(x,y))}.
\]
Given an Alexandrov space $X$ and a point $x\in X$, we define the \emph{tangent cone of $X$ at $x$} as $T_x X = C\Sigma_x(X)$. We denote the vertex of $T_xX$ by $0$.

The following basic example illustrates a connection between the theories of Alexandrov spaces and of closed singular Riemannian foliations.

\begin{Example}	
\label{ex:quotient.of.srf.is.alex.space}
The leaf space $M^\ast$ of a singular Riemannian foliation $(M,\fol)$ on a complete Riemannian manifold with closed leaves inherits a complete metric $d^\ast$ from $M$, known as the \emph{leaf-projection metric}.
For $x^\ast,y^\ast\in M^\ast$, the distance $d^\ast(x^\ast,y^\ast)$ is defined as
$
	 d^\ast(x^\ast,y^\ast) = d(L_x,L_y)$,
the distance between the leaves $L_x$ and $L_y$ considered as subsets of $M$.
Equipped with the metric $d^\ast$, the leaf space $M^\ast$ has curvature locally bounded below in the triangle comparison sense discussed above. Specifically, if $U\subset M$ is an open neighborhood with sectional curvature bounded below by $k_U\in \R$, then the projection $U^\ast \subset M^\ast$ has curvature bounded below by~$k_U$ (see \cite{LytchakThorbergsson2010}). The Hausdorff dimension of $M^*$ is equal to the codimension of $\fol$. Hence, if $M$ has sectional curvature uniformly bounded below by~$k\in \R$, and $\fol$ is closed, then $M^*$ is an Alexandrov space with $\curv \geq k$.
\end{Example}

\subsection{Submetries from proper metric spaces}
\label{S: Submetries between proper metric spaces}	
Below, we collect several results from \cite{Lytchak} on general submetries between Alexandrov spaces (see also \cite{GuijarroWalschap2011} and \cite{KapovitchLytchak2022}).

A map $p\colon X \to Y$ between two metric spaces is a \emph{submetry} if, for any $\varepsilon>0$ and any $x\in X$, we have $p(B_\varepsilon(x)) =B_\varepsilon(p(x))$. In other words, $p$ maps closed balls of radius $\varepsilon$ in $X$ onto closed balls of radius $\varepsilon$ in $Y$. Recall that a metric space $X$ is \emph{proper} if every closed ball in $X$ is compact. Every proper metric space is complete.

Let $(X,d)$ be a metric space and $\fol = \{L_\alpha\mid \alpha\in \Lambda\}$ a partition of $X$.

The foliation $\fol$ is \emph{equidistant} if for all leaves $L_\alpha,L_\beta\in \fol$ and all $x_\alpha \in L_\alpha$, we have $d(L_\alpha,L_\beta) = d(x_\alpha,L_\beta)$, where the first distance is the distance between subsets of $M$.
We will say that $\fol$ is a \emph{metric foliation} if it is an equidistant partition.
	
An element $L_\alpha\in \fol$ of a metric foliation is a \emph{leaf}. For any $x\in X$, we denote by $L_x$ the leaf of~$\fol$ containing $x$, and refer to $L_x$ as the \emph{leaf through $x$}. The \emph{leaf space} is the quotient space~${X/\fol}$ whose elements are the leaves of the foliation. As for singular Riemannian foliations, we set $X^* = X/\fol$. We define a metric on $X^*$ by letting $d(x^*,y^*) = d(L_x,L_y)$ for any $x^*,y^*\in X^*$. The leaf-projection map $p\colon X\to X^*$ is then a submetry.
	
When $X$ is a proper metric space, the leaves of a metric foliation of $X$ are closed subsets of $X$. Moreover, if $p\colon X\to Y$ is a submetry between metric spaces, then the partition $\fol = \big\{p^{-1}(y)\mid y\in Y\big\}$ is a metric foliation of $X$ with closed leaves (see \cite[p.~19]{Lytchak}).

\begin{lem}[\protect{\cite[Lemma~4.7]{Lytchak}}]
\label{L: characterization proper submetries}
Let $p\colon X\to Y$ be a submetry. If $X$ is a proper metric space, then the following assertions are equivalent:
\begin{enumerate}\itemsep=0pt
		\item[$(1)$] The map $p$ is proper.
		\item[$(2)$] The fibers of $p$ are compact.
		\item[$(3)$] There is a compact fiber.
\end{enumerate}
\end{lem}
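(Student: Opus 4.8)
\textbf{Proof strategy for Lemma~\ref{L: characterization proper submetries}.}
The plan is to prove the cyclic chain of implications $(1)\Rightarrow(2)\Rightarrow(3)\Rightarrow(1)$. The implications $(1)\Rightarrow(2)$ and $(2)\Rightarrow(3)$ are essentially formal: for $(1)\Rightarrow(2)$, each fiber $p^{-1}(y)$ is the preimage of the compact set $\{y\}$ under the proper map $p$, hence compact; for $(2)\Rightarrow(3)$, the space $Y$ is nonempty, so at least one fiber exists, and it is compact by hypothesis. (If one wants to be careful about $Y=\varnothing$, that case only occurs when $X=\varnothing$, where all three statements hold trivially.) So the only substantive content is $(3)\Rightarrow(1)$, and this is where I would concentrate the argument.

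For $(3)\Rightarrow(1)$, suppose $F_0 = p^{-1}(y_0)$ is compact for some $y_0\in Y$. I want to show that $p^{-1}(K)$ is compact for every compact $K\subset Y$. Since $X$ is proper, it suffices to show $p^{-1}(K)$ is bounded and closed; closedness is automatic once we know $p$ is continuous (which follows from the submetry property: $p$ is $1$-Lipschitz, since $p(B_\varepsilon(x))=B_\varepsilon(p(x))$ forces $d(p(x),p(x'))\le d(x,x')$). So the heart of the matter is boundedness: I claim $p^{-1}(K)$ is contained in some bounded subset of $X$. First observe that $K$ is bounded in $Y$, say $K\subset B_R(y_0)$ for some $R>0$. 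The key step is to promote the fiberwise bound to a uniform bound: I claim $p^{-1}(B_R(y_0)) \subset \Tub(F_0, R) = \{x\in X : d(x,F_0)\le R\}$, and that the latter set is bounded because $F_0$ is compact (hence bounded, say of diameter $D$), giving $p^{-1}(B_R(y_0))\subset B_{D+R}(x_0)$ for any fixed $x_0\in F_0$.

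The crucial inclusion $p^{-1}(B_R(y_0))\subset \Tub(F_0,R)$ is exactly where the submetry hypothesis does the work, and this is the main obstacle. The point is the following: if $p(x)\in B_R(y_0)$, i.e.\ $d(p(x),y_0)\le R$, I must produce a point of $F_0$ within distance $R$ of $x$. The subtle direction of the submetry definition — that $p$ maps the \emph{closed} ball $B_R(x)$ \emph{onto} the \emph{closed} ball $B_R(p(x))$ — is what delivers this: since $y_0\in B_R(p(x))= p(B_R(x))$, there exists $x'\in B_R(x)$ with $p(x')=y_0$, hence $x'\in F_0$ and $d(x,x')\le R$. This shows $d(x,F_0)\le R$, proving the inclusion. (One should note that properness of $X$ guarantees the closed balls $B_R(x)$ are the genuinely relevant objects and that $F_0$ compact gives $\diam F_0<\infty$ via compactness in the proper space $X$.) Combining: $p^{-1}(K)\subset p^{-1}(B_R(y_0))\subset B_{D+R}(x_0)$, a bounded set, and it is closed by continuity of $p$; since $X$ is proper, a closed bounded set is compact, so $p^{-1}(K)$ is compact. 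Therefore $p$ is proper, completing the cycle.
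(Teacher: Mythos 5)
Your proof is correct. Note that the paper does not reproduce a proof of this statement — it is quoted as Lytchak's Lemma~4.7 — so there is no in-text argument to compare against, but your cyclic chain is the natural one, and the key step $(3)\Rightarrow(1)$ is handled exactly as one would expect: the defining ``onto closed balls'' property of a submetry places $p^{-1}(B_R(y_0))$ inside the $R$-tube around the compact fiber $F_0$, and properness of $X$ then upgrades closed-and-bounded to compact.
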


Let $p\colon X	\to Y$ be a submetry between two metric spaces. Two points $x_1,x_2\in X$ are \emph{near $($with respect to $p)$} if $d(x_1,x_2) = d(p(x_1),p(x_2))$. The points $x_1$, $x_2$ are near if and only if they realize the distance between the leaves $L_{x_1}$ and $L_{x_2}$ in $X$.
	
Consider a length space $X$, a metric space $Y$, and a submetry $p\colon X\to Y$. Let $\gamma$ be a~geodesic in $Y$ through a point $y\in Y$. Given $x\in p^{-1}(y)$, there exists a geodesic $\tilde{\gamma}$ through $x$ that is mapped by $p$ isometrically onto $\gamma$. We call $\tilde{\gamma}$ the \emph{horizontal lift} of $\gamma$.	A geodesic in $X$ is \emph{horizontal } if it is mapped by $p$ isometrically onto a geodesic in $Y$. A shortest path between two points $x_1,x_2\in X$ is horizontal if and only if the points $x_1$, $x_2$ are near (see \cite[p.~21]{Lytchak}).

\begin{thm}[\protect{\cite[Theorem~7.2]{Lytchak}}]
\label{T: Fibers of submetry have the same metric properties as ambient space}
Let $p\colon X\to Y$ be a submetry between metric spaces. Then the following assertions hold:
\begin{enumerate}\itemsep=0pt
	\item[$(1)$] The components of each fiber $F_y = p^{-1}(y)$ are at distance at least $\varepsilon (y)$ from one another.
	\item[$(2)$] The intrinsic metric on each component of a fiber $F_y$ is locally Lipschitz-equivalent to the induced metric.
\end{enumerate}
\end{thm}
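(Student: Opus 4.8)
The plan is to derive both assertions from a single \emph{uniform local quasiconvexity} property of the fibers and then to establish that property. Precisely, the aim is: for every $y\in Y$ there exist constants $\varepsilon(y)>0$ and $L(y)\geq 1$ such that any two points $x_1,x_2\in F_y$ with $d(x_1,x_2)<\varepsilon(y)$ can be joined by a path contained in $F_y$ of length at most $L(y)\,d(x_1,x_2)$. Granting this, assertion~(2) is immediate: such a joining path is connected and meets the component $C$ of $x_1$, hence lies in $C$, so on $\varepsilon(y)$-balls the induced metric $d|_C$ and the intrinsic metric $d^{\mathrm{int}}_C$ satisfy $d|_C\leq d^{\mathrm{int}}_C\leq L(y)\,d|_C$ (the first inequality because $X$ is a length space). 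Assertion~(1) also follows: if $C_1\neq C_2$ were distinct components of $F_y$ with $x_i\in C_i$ and $d(x_1,x_2)<\varepsilon(y)$, the joining path would be a connected subset of $F_y$ meeting both, forcing $C_1=C_2$; hence $d(C_1,C_2)\geq\varepsilon(y)$, which is the separation statement with this $\varepsilon(y)$.

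To establish the quasiconvexity property I would fix $x_0\in F_y$, take $x_1,x_2\in F_y$ near $x_0$, set $\delta=d(x_1,x_2)$, and pick a unit-speed shortest path $\sigma\colon[0,\delta]\to X$ from $x_1$ to $x_2$ (using, as in our applications, that $X$ is a proper length space, so that $Y$ is proper as well and shortest paths exist). Since $p$ is $1$-Lipschitz, $c:=p\circ\sigma$ is a loop at $y$ with $d(c(t),y)\leq\min(t,\delta-t)$; by the identity $d(z,F_y)=d(p(z),y)$ valid for submetries, the same bound holds for $d(\sigma(t),F_y)$. Because $X$ is a length space, a shortest path from $c(t)$ to $y$ admits a horizontal lift starting at $\sigma(t)$, and its endpoint $h(t)$ lies in $F_y$ with $d(\sigma(t),h(t))=d(c(t),y)$. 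The assignment $t\mapsto h(t)$ traces a curve in $F_y$ from $x_1$ to $x_2$, and it remains to arrange that it (or a suitable modification) is continuous, rectifiable, and of length at most $L(y)\,\delta$.

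This length estimate is the crux and is genuinely nontrivial. The crude bound $d(h(t),h(t'))\leq d(\sigma(t),\sigma(t'))+d(c(t),y)+d(c(t'),y)$ is worthless, since the projected loop $c$ can stray to distance of order $\delta/2$ from $y$ --- a first-order deviation --- so the corresponding sum over a fine partition of $[0,\delta]$ diverges; for the same reason a naive ``project the midpoint back to the fiber'' recursion fails to contract. To overcome this one has to use the infinitesimal structure of $p$ near $x_0$: one shows that, over a small enough ball around $y$, the submetry $p$ is uniformly close to a blow-up submetry $T_{x_0}X\to T_yY$ between tangent cones, whose fiber over the vertex of $T_yY$ is a sub-cone of $T_{x_0}X$ that one shows --- using the structure of blow-up submetries, and noting that for a self-similar set quasiconvexity at one scale gives it at all scales --- to be uniformly locally quasiconvex; a Reifenberg-type stability comparison then transfers uniform local quasiconvexity, with $L(y)$ close to $1$, from this infinitesimal model to $F_y$ itself, and simultaneously yields the uniform separation constant $\varepsilon(y)$ needed for~(1). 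Making the comparison quantitative, with constants uniform near $y$, and analysing the blow-up submetry and its vertex-fiber, is where the real work lies; the details are carried out in \cite[Section~7]{Lytchak}.
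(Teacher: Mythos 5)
This statement is Theorem~7.2 of Lytchak's reference \cite{Lytchak}; the present paper cites it without proof, so there is no in-paper argument against which to check your attempt. Your reduction of both assertions to a single uniform local quasiconvexity property of $F_y$ is correct and clean: a joining path of length at most $L(y)\,d(x_1,x_2)$ inside $F_y$ that starts in a component $C$ must stay in $C$, which gives both the bi-Lipschitz comparison on small balls of $C$ and the $\varepsilon(y)$-separation of distinct components. Your diagnosis that the naive telescoping estimate for the ``project-and-relift'' curve $t\mapsto h(t)$ is useless because $c=p\circ\sigma$ strays from $y$ to first order is also accurate, and it correctly identifies the infinitesimal structure of $p$ as the only plausible source of a sharper estimate.

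That said, there is a genuine gap: the decisive step --- uniform local quasiconvexity of $F_y$, with $\varepsilon(y)$ and $L(y)$ uniform over the fiber --- is not proved. You describe the intended strategy (blow up to a homogeneous submetry between tangent cones, analyze the vertex fiber, transfer back by a Reifenberg-type stability comparison) but carry out none of its steps and explicitly defer to Lytchak; that deferred argument is the entire substance of the theorem, not a routine detail. Two secondary issues compound this. The construction $t\mapsto h(t)$ is not well defined as written: horizontal lifts need not be unique, and even after a selection, continuity and rectifiability of $h$ are not established --- you flag this (``or a suitable modification'') but do not resolve it, and the blow-up argument you gesture at bypasses $h$ altogether rather than repairing it. Finally, note that as quoted in the paper the theorem is stated merely for ``metric spaces'', which is too weak for either conclusion to hold; your sketch correctly supplements this with properness and a length structure (an Alexandrov space in the paper's applications), but it would be worth recording explicitly that these are genuine extra hypotheses rather than conveniences.
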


\subsection{Submetries between Alexandrov spaces}
\label{SS:submetries.between.alexandrov.spaces}
When the metric space $X$ is an Alexandrov space, we may gain further insight into the local structure of submetries. To do so, we first recall several notions and results that will enable us to describe the space of directions of a family of leaves. We follow \cite{Lytchak}.

Recall that a~map~${f\colon (X,d_X)\to (Y,d_Y)}$ between metric spaces is \emph{Lipschitz} if there exists a~real number~${K>0}$ such that $d_Y(f(x_1),f(x_2))\leq K d_X(x_1,x_2)$ for any $x_1,x_2\in X$. In this case, we say $f$ is \emph{$K$-Lipschitz}.

Let $\Sigma$ be an $n$-dimensional Alexandrov space with curvature bounded below by $1$.
Given $A\subset \Sigma$, we let
\[
\Pol(A) = \{v\in \Sigma \mid d_{\Sigma}(v,A)\geq \pi/2\}
\]
and refer to it as the \emph{polar set of $A$}.
Two points $v,w\in \Sigma$ are \emph{antipodal} if $d_{\Sigma}(v,w) =\pi$. Toponogov's comparison theorem implies that $\Pol(A)$ is a totally convex subset of $\Sigma$. Hence, $\Pol(A)$ is an Alexandrov space with $\curv\geq 1$.

Let $X$ and $Y$ be metric spaces, and let $CX$ and $CY$ be their respective topological cones. A~map $f\colon CX\to CY$ is \emph{homogeneous} if
$
	f([t,x]) = [t,f(x)]
$
for all $t\in [0,\infty)$, and all $x\in X$. As mentioned in Section~\ref{S: Alexandrov spaces}, if $\Sigma$ is an $n$-dimensional Alexandrov space with $\curv\geq 1$, then~$C\Sigma$ may be endowed with a metric $d$ such that $(C\Sigma,d)$ is an $(n+1)$-dimensional Alexandrov space of non-negative curvature. We call $(C\Sigma,d)$ the \emph{Euclidean cone} over $\Sigma$. We will refer to points in~$C\Sigma$ as \emph{vectors}and to points in $\Sigma$ as \emph{directions}.
Given a vector $v=[t,u]$ in $(C\Sigma,d)$, we will refer to $t$ as the \emph{magnitude} of $v$ 	and will set $|v| = t$. Let $p\colon C\Sigma \to C\Tau$ be a submetry between Euclidean cones. A vector $h\in C\Sigma$ is \emph{horizontal $($with respect to $p)$} if $|p(h)| = |h|$.
 A~subset~$A$ of an Alexandrov space $X$ is \emph{totally convex} if, for any two points $x,y\in A$, every (minimizing) geodesic joining $x$ and $y$ is contained in~$A$. If $\curv(X)\geq 1$, we require this only for pairs with~$d(x,y)<\pi$. In particular, $\Sp^0$ is always a totally convex subset in any round unit sphere.
	
\begin{prop}[\protect{\cite[Proposition 6.4 and Lemma 6.5]{Lytchak}}]
		\label{P: Horizontal and Vertical spaces of space of directions of submetry}
Let $\Sigma$ and $\Tau$ be Alexandrov spaces with $\curv\geq 1$ and let $C\Sigma$, and $C\Tau$ be their respective Euclidean cones. If $p\colon C\Sigma\to C\Tau$ is a~homogeneous submetry, then the following assertions hold:
\begin{enumerate}\itemsep=0pt
	\item[$(1)$] The preimage of the vertex $o\in C\Tau$ is a totally convex subcone $CV$ of $C\Sigma$ defined over a~totally convex subset $V\subset \Sigma$.
	\item[$(2)$] The cone $CH\subset C\Sigma$ over $H = \Pol(V)\subset \Sigma$, the polar set of $V$, is the set of horizontal vectors.
	\item[$(3)$] Let $H=\Pol(V)$ as in item $(2)$. Then $\Pol(H)=V$.
\end{enumerate}
\end{prop}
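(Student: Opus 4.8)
The plan is to deduce all three parts from the two defining features of a submetry $p$ — it is $1$-Lipschitz, and it lifts balls, $p(\bar B_\varepsilon(v))=\bar B_\varepsilon(p(v))$ — together with homogeneity, which forces $p$ to commute with the cone dilations, so that $p(0)=o$ and $|p(v)|\le|v|$. The keystone will be the identity
\begin{equation*}
|p(v)|=d\bigl(v,\,p^{-1}(o)\bigr),\qquad v\in C\Sigma.\tag{$\star$}
\end{equation*}
Here ``$\le$'' is $|p(v)|=d(p(v),p(q))\le d(v,q)$ for any $q\in p^{-1}(o)$, and ``$\ge$'' follows by lifting the ball $\bar B_{|p(v)|}(v)$, whose image $\bar B_{|p(v)|}(p(v))$ contains $o$.

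By homogeneity $p^{-1}(o)$ is dilation‑invariant, hence is the cone $CV$ over $V:=p^{-1}(o)\cap\Sigma$, a closed (hence compact) subset of $\Sigma$; I would defer the total convexity of $V$ and $CV$ to the end. Item~(2) is then cone trigonometry: minimizing $d_C([1,u],[t,w])^2=1+t^2-2t\cos d_\Sigma(u,w)$ over $t\ge 0$ and $w\in V$ gives $d([1,u],CV)=\sin d_\Sigma(u,V)$ when $d_\Sigma(u,V)\le\pi/2$ and $d([1,u],CV)=1$ otherwise, so by $(\star)$ a unit vector $[1,u]$ is horizontal iff $d([1,u],CV)=1$ iff $d_\Sigma(u,V)\ge\pi/2$ iff $u\in\Pol(V)=H$; since the set of horizontal vectors is dilation‑invariant, it equals $CH$.

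For item~(3), the inclusion $V\subseteq\Pol(H)$ is formal: every $h\in H=\Pol(V)$ has $d_\Sigma(h,u)\ge\pi/2$ for all $u\in V$. For the reverse inclusion, suppose $u\in\Pol(H)$ but $u\notin V$, so $s:=|p([1,u])|\in(0,1)$ and $p([1,u])=[s,\bar u]$; the aim is to exhibit a horizontal direction $e\in H$ with $d_\Sigma(u,e)<\pi/2$, a contradiction. Applying the horizontal‑lift property to the radial ray $r\mapsto[r,\bar u]$ of $C\Tau$ at the point $[1,u]\in p^{-1}([s,\bar u])$ yields a geodesic ray $\eta\colon[0,\infty)\to C\Sigma$ with $\eta(s)=[1,u]$, $p(\eta(r))=[r,\bar u]$, and $\eta(0)\in p^{-1}(o)=CV$. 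Since $p$ is $1$-Lipschitz with $p(0)=o$, for $r\ge s$ we get $r=|p(\eta(r))|\le|\eta(r)|\le 1+(r-s)$; writing $\eta(r)=[\,|\eta(r)|,e_r\,]$, homogeneity gives $|p([1,e_r])|=r/|\eta(r)|\to 1$, so the directions $e_r$ become asymptotically horizontal and converge to the direction $e\in\Sigma$ of $\eta$ at infinity, with $e\in H$ by~(2). The remaining point $d_\Sigma(u,e)<\pi/2$ is the heart of the matter: in the Euclidean model $C\Sigma=\R^n$ (so $p$ is, say, a linear isometric quotient), the shadow $r\mapsto e_r$ is a great‑circle arc meeting $V$ perpendicularly at $\eta(0)$ along which $d_\Sigma(\cdot,V)$ grows at unit rate from $\arcsin s$ at $u$ to $\pi/2$ at $e$, giving $d_\Sigma(u,e)=\arccos s<\pi/2$; the general case I would obtain by an induction on dimension, passing to the submetry induced on the space of directions $\Sigma_{\eta(0)}(C\Sigma)$ at the foot point $\eta(0)\in CV$. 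Once $\Pol(H)=V$ is known, $V=\Pol(H)$ is a polar set, hence totally convex, and therefore so is the cone $CV$, which completes~(1).

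The step I expect to be the genuine obstacle is the strict inequality $d_\Sigma(u,e)<\pi/2$ in~(3). Items~(1) and~(2) used only that $p$ is a $1$-Lipschitz map with the ball‑lifting property, whereas here one must bound an angle \emph{from above}, which the Toponogov comparison in non‑negative curvature does not provide; the way around is to use the full submetry hypothesis inductively along spaces of directions. Equivalently, one can try to establish the Pythagorean‑type identity $d(v,CV)^2+d(v,CH)^2=|v|^2$ on $C\Sigma$, which makes~(3) immediate (for $v=[1,u]$ it reads $u\in\Pol(H)\iff d([1,u],CH)=1\iff d([1,u],CV)=0\iff u\in V$) and which in the cone reduces to the spherical statement $d_\Sigma(u,V)+d_\Sigma(u,\Pol(V))=\pi/2$ whenever both distances are at most $\pi/2$.
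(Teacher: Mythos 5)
The paper only quotes this proposition from Lytchak and supplies no proof, so there is nothing in the paper to compare against; I will assess your argument on its own.

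Your identity $(\star)$, the identification $p^{-1}(o)=CV$, and the cone trigonometry giving item~(2) are all correct, and they are the natural route. The forward inclusion $V\subseteq\Pol(H)$ in item~(3) is also fine, and deferring the total convexity of $V$ until after $V=\Pol(H)$ is established is a legitimate logical order. One small point: you should lift only the forward ray $\alpha(r)=[r,\bar u]$ for $r\in[s,\infty)$ starting at $[1,u]$; lifting the backward segment to $\eta(0)\in CV$ and gluing the two pieces into a single geodesic on $[0,\infty)$ is not automatic, and in fact the backward half is never used in the estimate.

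The step you flag, $d_\Sigma(u,e)<\pi/2$, is indeed where the content is, but it does not need the Pythagorean identity or an induction on dimension over spaces of directions. The Euclidean‑model computation you wrote down transfers verbatim to the general cone by developing the geodesic: a unit‑speed geodesic $\eta$ in $C\Sigma$ that avoids the apex unrolls isometrically onto a straight line in $\R^2$, and the arc length of its spherical shadow $\sigma(r)=e_r$ in $\Sigma$ is exactly the increment of the polar angle in the unrolled picture. Concretely, with $|\eta(s)|=1$ one has
\[
|\eta(r)|^{2}=1+2b\,(r-s)+(r-s)^{2},\qquad b=\cos\angle\bigl(\text{radial direction at }\eta(s),\ \eta'(s)\bigr),
\]
and the $1$-Lipschitz bound $|\eta(r)|\ge|p(\eta(r))|=r$ for all $r\ge s$ forces $b\ge s>0$. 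The total polar angle swept from $r=s$ to $r=\infty$ is $\arccos b\le\arccos s<\pi/2$, so the shadow $\sigma$ has length $<\pi/2$ and hence $d_\Sigma(u,e)\le\arccos s<\pi/2$, the desired contradiction. (Convergence of $e_r$ to $e$ and the fact that $e$ is horizontal follow, as you say, from the finite length of $\sigma$ and the closedness of the horizontal set.) With this the argument is complete; I cannot say whether Lytchak's own proof takes this route, but I see no remaining gap in yours once the unrolling step is inserted.
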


Let $V$ and $H$ be as in Proposition~\ref{P: Horizontal and Vertical spaces of space of directions of submetry}. 
Given $v\in V$, consider the set
\[
	H^v = \{h\in \Sigma \mid d_{\Sigma}(h,v) = \pi/2\}. 
\]
Since $v\in V\subset\Sigma$ and $d_\Sigma(h,v)=\pi/2$ implies $d_\Sigma(h,V)\ge\pi/2$,
we have $H^v\subset \Pol(V)=H$. Thus, every $h\in H^v$ is a horizontal direction.

The set $\widetilde{H}= \bigcap_{v\in V} H^v$ is the set of all points $h\in H$ that have distance equal to $\pi/2$ to any point in $V$. 
Since $H$ and $V$ are totally convex subsets of $\Sigma$, they are Alexandrov spaces of~${\curv \geq 1}$. Thus, their \emph{spherical join} $H*V$ is again an Alexandrov space with $\curv\geq 1$.

Moreover, there exists a $1$-Lipschitz map $P_{H}\colon \Sigma \to H\ast V$ for which the following assertions are equivalent.

\begin{prop}[\protect{\cite[Proposition 6.14 ]{Lytchak}}]
\label{P:LYTCHAK_SPANNING_SUBMETRY}
The following assertions are equivalent:
\begin{enumerate}\itemsep=0pt
\item[$(1)$] $P_H$ is surjective.
\item[$(2)$] $H = \widetilde{H}$.
\item[$(3)$] $P_H$ is a submetry.
\end{enumerate}
\end{prop}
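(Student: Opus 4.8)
The plan is to prove the chain of implications $(1)\Rightarrow(2)\Rightarrow(3)\Rightarrow(1)$, exploiting the geometry of the spherical join $H\ast V$ and the $1$-Lipschitz projection $P_H\colon\Sigma\to H\ast V$ supplied by the discussion preceding the statement. The key structural fact I would use throughout is that in $H\ast V$, a point is written $\cos(t)\,h + \sin(t)\,v$ with $h\in H$, $v\in V$, $t\in[0,\pi/2]$, and that $d_{H\ast V}$ is computed by the standard join formula; in particular the ``copy'' of $H$ sitting inside $H\ast V$ at parameter $t=0$ consists of points at distance exactly $\pi/2$ from every point of the copy of $V$ at $t=\pi/2$. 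Under $P_H$, by construction the subsets $V$ and $H$ of $\Sigma$ are mapped isometrically onto these respective copies inside $H\ast V$, and $P_H$ restricted to the ``horizontal'' part is compatible with the join structure.

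For $(1)\Rightarrow(2)$: one inclusion, $\widetilde H\subset H$, is immediate from the definition ($\widetilde H=\bigcap_{v\in V}H^v$ consists of points of $H$ at distance $\pi/2$ from all of $V$). For the reverse inclusion, suppose $h\in H=\Pol(V)$; I want to show $d(h,v)=\pi/2$ for every $v\in V$, i.e.\ that no such distance can exceed $\pi/2$ (it is already $\geq\pi/2$ since $h\in\Pol(V)$). Here is where surjectivity of $P_H$ enters: if some $d(h,v_0)>\pi/2$, then I would locate a point $z$ in $H\ast V$ — lying on a join segment from (the image of) $h$ through a genuinely ``interior'' parameter — that lies outside the image $P_H(\Sigma)$. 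Concretely, because $P_H$ is $1$-Lipschitz and maps $H$ and $V$ isometrically onto their join copies, a point $w\in\Sigma$ with $P_H(w)=\cos(t)h+\sin(t)v_0$ for $0<t<\pi/2$ would have to satisfy $d(w,h)\le t$ and $d(w,v_0)\le \pi/2 - $ (something), forcing by the triangle inequality $d(h,v_0)\le\pi/2$, a contradiction. So surjectivity of $P_H$ near that join segment forces $d(h,v_0)=\pi/2$; since $h\in H$ and $v_0\in V$ were arbitrary, $H\subset\widetilde H$, giving $H=\widetilde H$.

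For $(2)\Rightarrow(3)$: assuming $H=\widetilde H$, I would show $P_H$ is a submetry by verifying the ball condition $P_H(B_r(x))=B_r(P_H(x))$ for all $x\in\Sigma$, $r>0$. The inclusion $\subseteq$ is just $1$-Lipschitzness. For $\supseteq$, given a target point $\cos(t)h+\sin(t)v$ within distance $r$ of $P_H(x)$, I would produce a preimage in $B_r(x)$: using $H=\widetilde H$ one knows $H$ and $V$ are ``orthogonal'' in $\Sigma$ in the strongest sense (every point of $H$ is at distance exactly $\pi/2$ from every point of $V$), which by Toponogov comparison and the first variation formula lets one build, for $h\in H$ and $v\in V$, a geodesic in $\Sigma$ realizing the join parameter $t$ — i.e.\ $\Sigma$ contains an isometrically embedded copy of the relevant join segment, and $P_H$ is the identity on it. Assembling these geodesics and using the split of distances afforded by $\widetilde H=H$ recovers the required preimage. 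Thus $P_H$ is a submetry.

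Finally $(3)\Rightarrow(1)$ is essentially immediate: a submetry between metric spaces is surjective onto its target (closed balls map onto closed balls, so the image is all of $H\ast V$ once one ball is), hence $P_H$ surjective. The main obstacle I anticipate is $(1)\Rightarrow(2)$ and the reverse direction of the ball condition in $(2)\Rightarrow(3)$: both require turning a statement about the join $H\ast V$ into the existence of an actual geodesic segment inside the Alexandrov space $\Sigma$ realizing the prescribed join parameter, which is where one genuinely needs the $\curv\ge1$ hypothesis on $\Sigma$, the total convexity of $H=\Pol(V)$ and $V=\Pol(H)$ (Proposition~\ref{P: Horizontal and Vertical spaces of space of directions of submetry}(3)), and a careful first-variation/comparison argument rather than soft point-set reasoning. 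I would isolate that geometric construction — ``if $d(h,v)=\pi/2$ for all $v\in V$ then the join segment from $h$ to any $v$ embeds isometrically in $\Sigma$'' — as a lemma and prove the proposition from it.
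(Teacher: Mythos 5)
This proposition is not proved in the paper: it is quoted directly from Lytchak's work as \cite[Proposition~6.14]{Lytchak}, so there is no in-paper argument to compare against, and your attempt must be judged on its own terms. The overall plan $(1)\Rightarrow(2)\Rightarrow(3)\Rightarrow(1)$ and the final step $(3)\Rightarrow(1)$ (submetries are surjective) are fine, but the first two implications have genuine gaps.

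In $(1)\Rightarrow(2)$ the decisive inequality runs the wrong way. A $1$-Lipschitz map $f$ satisfies $d(f(a),f(b))\leq d(a,b)$, so if $P_H$ is $1$-Lipschitz and fixes $H$ pointwise, then for any preimage $w$ of $\cos(t)h+\sin(t)v_0$ you get $t = d_{H\ast V}\bigl(P_H(w),h\bigr)\leq d_\Sigma(w,h)$, i.e.\ $d(w,h)\geq t$, and similarly $d(w,v_0)\geq \pi/2-t$. Adding these gives $d(w,h)+d(w,v_0)\geq \pi/2$, which is perfectly consistent with $d(h,v_0)>\pi/2$ and produces no contradiction via the triangle inequality. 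To salvage the argument one must use the actual formula defining $P_H$ (which encodes the exact distances $d(w,H)$, $d(w,V)$ and the choice of nearest-point directions), not mere $1$-Lipschitzness — and even then the nearest point of $w$ in $H$ need not coincide with the $h$ appearing in $P_H(w)$, which is precisely where nonsurjectivity must be detected. As written, the step fails. The implication $(2)\Rightarrow(3)$ is likewise incomplete: you correctly identify the real obstacle — showing that when every $h\in H$ is at distance exactly $\pi/2$ from every $v\in V$, the join segments are realized isometrically inside $\Sigma$ and the ball condition $P_H(B_r(x))\supseteq B_r(P_H(x))$ follows — but you defer it to an unproved lemma rather than supply the comparison/rigidity argument in $\curv\geq 1$ that Lytchak's proof actually carries out, and realizing individual join segments is by itself not enough to control distances from an arbitrary $x\in\Sigma$ to the constructed preimages.
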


If the $1$-Lipschitz map $P_H\colon \Sigma\to H\ast V$ satisfies any of the conditions in Proposition~\ref{P:LYTCHAK_SPANNING_SUBMETRY}, we say that the set $H\subset \Sigma$ is \emph{almost spanning}. Moreover, if $P_{H}$ is an isometry, we say that $H$ is \emph{spanning}. We call a homogeneous submetry $p\colon C\Sigma\to C\Tau$ \emph{$($almost$)$ spanning} if the horizontal set $H\subset \Sigma$ is (almost) spanning.
	
By considering the Euclidean cones over the spaces $\Sigma$, $H\ast V$, $H$, and $\Tau$, and the maps between them discussed above, we may represent a homogeneous submetry $p\colon C\Sigma\to C\Tau$ as a~composition
\[
	C\Sigma \overset{CP_H}{\longrightarrow} CH\times CV \overset{\mathrm{pr}_{CH}}{\longrightarrow} CH\overset{p}{\longrightarrow} C\Tau,
\]
where $\mathrm{pr}_{CH}$ is the projection onto the first factor of the metric product $CH\times CV$, and observe that $\mathrm{pr}_{CH}$ and $p$ are submetries.

\begin{lem}[\protect{\cite[Lemma~6.15]{Lytchak}}]
\label{L: submetry restricted to horizontal part is an isometry implies space of directions splits into horizontal and vertical}
Let $p\colon C\Sigma\to C\Tau$ be a homogeneous submetry and let $H\subset \Sigma$ be the set of horizontal directions.
If the restriction $\restr{p}{H}\colon H\to \Tau$ is an isometry, then $H$ is almost-spanning.
\end{lem}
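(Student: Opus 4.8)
The plan is to show that the hypothesis forces $H = \widetilde H$ and then invoke Proposition~\ref{P:LYTCHAK_SPANNING_SUBMETRY}. Recall from Proposition~\ref{P: Horizontal and Vertical spaces of space of directions of submetry} that $V\subset\Sigma$ is the totally convex set over which $p^{-1}(o)$ is the cone, and $H=\Pol(V)$, with $\Pol(H)=V$. We always have $\widetilde H\subset H$ by definition, so the content is the reverse inclusion: every horizontal direction $h\in H$ has $d(h,v)=\pi/2$ for \emph{all} $v\in V$, not merely $d(h,v)\ge\pi/2$ for some. The key observation is that, since $\restr{p}{H}$ is an isometry onto $\Tau$, the horizontal cone $CH$ is mapped isometrically onto $C\Tau$; in particular the submetry $p$, pre-composed with the horizontal lift $CH\hookrightarrow C\Sigma$, is distance-preserving. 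I would exploit this together with the factorization $C\Sigma\xrightarrow{CP_H}CH\times CV\to CH\xrightarrow{p}C\Tau$ of the submetry displayed above.

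First I would set up the geometry at unit magnitude: take $h\in H$ with $|h|=1$ and $v\in V$ with $|v|=1$, regarded as unit vectors in $C\Sigma$. Their distance in $C\Sigma$ is $\sqrt{2-2\cos d(h,v)}$, where $d(h,v)$ is the angular distance in $\Sigma$; showing $d(h,v)=\pi/2$ is equivalent to showing $h$ and $v$ are orthogonal as vectors in $C\Sigma$, i.e.\ the triangle $0,h,v$ is right-angled at $0$. Now $h$ is horizontal, so $|p(h)|=1$; the vertical direction $v$ lies in $p^{-1}(o)$, so $p(v)=o$, and $p$ being $1$-Lipschitz gives $d_{C\Tau}(p(h),p(v))=|p(h)|=1$. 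On the other hand $p$ is a submetry and $h$ realizes the distance from its fiber to the vertex fiber — indeed $d(h,CV)\le d(h,0)=1=|p(h)|=d_{C\Tau}(p(h),o)$, and a submetry cannot decrease fiber distances, so $d(h,CV)=1=d(h,0)$. This says the segment from $h$ to the vertex is the shortest path from $h$ to the vertical cone $CV$; by first variation / the characterization of near points in an Alexandrov cone, this forces the segment $\overline{h0}$ to meet $CV$ orthogonally, hence $\angle(h,v)\ge\pi/2$ for every $v\in V$ at unit magnitude. Combined with $h\in\Pol(V)$ giving the same inequality, the real work is the opposite bound.

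For the reverse inequality I would use that $\restr{p}{H}$ is an \emph{isometry}, not just a submetry, to pin the angle down exactly. Pick $h\in H$, $v\in V$, both unit. Consider the point $w\in\Sigma$ lying on a geodesic from $h$ toward $v$ (exists since $\Sigma$ has $\curv\ge1$ and $\diam\le\pi$) at small angular distance $t$ from $h$; write $w=[1,u_w]$ in $C\Sigma$. Apply $CP_H$: the horizontal component $P_H(w)\in H$ and vertical component in $V$ vary, and a Toponogov comparison in the cone controls $|CP_H^{\mathrm{hor}}(w)|$ in terms of $t$ and $\angle(h,v)$. The point is that because $\restr{p}{H}$ is an isometry, the map $p\circ CP_H^{\mathrm{hor}}$ must coincide, on $h$ and on nearby points, with the isometric image, which forbids $\angle(h,v)$ from being strictly less than $\pi/2$ — if it were, the horizontal projection would strictly decrease the magnitude of a horizontal perturbation of $h$, contradicting that $p$ restricted to $H$ preserves all distances and hence all magnitudes. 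Concretely, I would compare $d(h,v)$ with $d(p(h),p(v))=d(p(h),o)=|p(h)|$ after perturbing: choose a horizontal geodesic in $\Sigma$ through $h$, lift the corresponding geodesic in $\Tau$, and observe that the second-order term in the cone distance between the lifted point and $v$ must match the Euclidean right-angle expansion $\sqrt{s^2+1}$, which pins $\angle(h,v)=\pi/2$ exactly.

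Having established $d(h,v)=\pi/2$ for all $h\in H$ and all $v\in V$, we get $H\subset\widetilde H$, hence $H=\widetilde H$; by the equivalence in Proposition~\ref{P:LYTCHAK_SPANNING_SUBMETRY} the projection $P_H\colon\Sigma\to H\ast V$ is a submetry (equivalently surjective), which is exactly the statement that $H$ is almost spanning, and hence that $p\colon C\Sigma\to C\Tau$ is almost spanning. \textbf{The main obstacle} I anticipate is the second-order comparison argument in the middle step: converting the qualitative fact ``$\restr{p}{H}$ is an isometry'' into the exact equality $\angle(h,v)=\pi/2$ requires carefully choosing test geodesics and using the rigidity case of Toponogov's theorem in the non-negatively curved cone $C\Sigma$, together with the fact that $P_H$ is $1$-Lipschitz with the factorization given; getting the rigidity (equality forces the splitting $CH\times CV$ locally) is the crux, and it is precisely where the ``isometry'' hypothesis, as opposed to merely ``submetry'', is used.
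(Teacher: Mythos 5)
The paper does not prove this lemma; it is cited verbatim from \cite[Lemma~6.15]{Lytchak} with no in-text argument, so there is no ``paper's proof'' to compare against. Taking your sketch on its own terms, the reduction is correct: by Proposition~\ref{P:LYTCHAK_SPANNING_SUBMETRY}, ``$H$ almost spanning'' means $H=\widetilde{H}$, and since $\widetilde{H}\subset H$ always holds, it suffices to prove $d(h,v)=\pi/2$ for every $h\in H$ and $v\in V$. You also correctly observe that $d(h,v)\ge\pi/2$ is immediate from $h\in\Pol(V)$, which makes your second paragraph (the near-point/first-variation discussion) a re-derivation of something already known; it can be cut.

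The gap is in the third paragraph, the bound $d(h,v)\le\pi/2$, which is the whole content. First, there is a sign confusion: you claim the isometry hypothesis ``forbids $\angle(h,v)$ from being strictly less than $\pi/2$,'' but $<\pi/2$ is already excluded by polarity; what must be ruled out is $d(h,v)>\pi/2$. Second, the proposed mechanism does not close. Writing $\theta=d(h,v)$ and taking $w$ on the geodesic from $h$ to $v$ at angular distance $t$ from $h$, the $1$-Lipschitz property of $p$ together with $|p(h)|=1$ and $p(v)=o$ only gives
\[
1-\sqrt{2-2\cos t}\;\le\;|p([1,w])|\;\le\;\sqrt{2-2\cos(\theta-t)},
\]
and for $\theta>\pi/2$ the lower bound stays strictly below the upper bound for every $t\in[0,\theta]$, at all orders; so a magnitude-matching or second-order Toponogov comparison of this type cannot by itself produce a contradiction. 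The alternative route through $CP_H^{\mathrm{hor}}$ is worse: estimating $|CP_H^{\mathrm{hor}}(w)|$ as if the horizontal/vertical decomposition behaved like a metric-product projection implicitly assumes $CP_H$ is a submetry, which by Proposition~\ref{P:LYTCHAK_SPANNING_SUBMETRY} is exactly the conclusion of the lemma. You flag the rigidity step yourself as the crux; as written the sketch does not establish it, and the display above shows it cannot be a purely local triangle-inequality argument in the cone.
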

	
A homogeneous submetry $p\colon C\Sigma\to C\Tau$ is \emph{regular} if $p\colon H\to \Tau$ is an isometry.

Following \cite[Definition 2.5]{Lytchak}, we will say that $\Sigma$ is \emph{Riemannian} if it is isometric to the unit round $n$-sphere~$\Sp^n$. If $\vol(\Sigma)>(1-\rho)\vol(\Sp^n)$ for a fixed and sufficiently small positive real number $\rho=\rho(n)$, we refer to $\Sigma$ as \emph{extremely thick}. If there exist $n+2$ points~${v_i\in \Sigma}$ with~${d(v_i,v_j)>\pi/2}$, we call $\Sigma$ \emph{thick}. Additionally, we say $\Sigma$ is \emph{round} if $\mathrm{rad}(\Sigma)>\pi/2$, where~$\rad(\Sigma)$, the \emph{radius} of $\Sigma$, is given~by
\[
 	\mathrm{rad}(\Sigma) = \inf_{v\in \Sigma}\sup_{w\in \Sigma}d(v,w).
\]
By Grove and Petersen's radius sphere theorem, every round Alexandrov space is homeomorphic to a sphere (see \cite{GrovePetersen1993}).

\begin{prop}[\protect{\cite[Proposition~6.16]{Lytchak}}]
\label{L: Base space of homogenous submetry is a disk, then p is regular}
Let $p\colon C\Sigma\to C\Tau$ be a homogeneous submetry. Then the following assertions hold:
\begin{enumerate}\itemsep=0pt
\item[$(1)$] If either $V$ or $H$ is a round space, then $p$ is spanning.
\item[$(2)$] If $\Tau$ is a round space, then $p$ is regular and spanning.
\end{enumerate}	
\end{prop}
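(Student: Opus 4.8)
The plan is to reduce item~(2) to item~(1) and then prove item~(1) directly. For the reduction, assume $\Tau$ is round. I would first establish that $p$ is \emph{regular}, i.e.\ that the last arrow $p\colon CH\to C\Tau$ in the factorization $C\Sigma\xrightarrow{CP_H}CH\times CV\xrightarrow{\mathrm{pr}_{CH}}CH\xrightarrow{p}C\Tau$ restricts to an isometry $H\to\Tau$. Granting this, Lemma~\ref{L: submetry restricted to horizontal part is an isometry implies space of directions splits into horizontal and vertical} gives that $H$ is almost-spanning, and since $H$ is isometric to $\Tau$ it is round; item~(1) then upgrades ``almost-spanning'' to ``spanning'', finishing~(2). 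So the two things to prove are: (a) item~(1), and (b) regularity of $p$ when $\Tau$ is round.

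For (a), by the $\Pol$-duality $H=\Pol(V)$, $V=\Pol(H)$ the hypotheses on $H$ and $V$ are symmetric, so I may assume $H$ is round, $\rad(H)>\pi/2$. By Proposition~\ref{P:LYTCHAK_SPANNING_SUBMETRY} the map $P_H\colon\Sigma\to H\ast V$ is $1$-Lipschitz, restricts to the identity on $H$ and on $V$, and $H$ is spanning iff $P_H$ is an isometry. I would first show $H$ is almost-spanning, i.e.\ $H=\widetilde H$: for $h\in H$ and $v\in V$ one has $d(h,v)\geq\pi/2$ since $h\in\Pol(V)$, and a comparison/rigidity argument in $\Sigma$ (using $\curv\geq 1$, $\diam\leq\pi$, and the existence — by roundness — of points of $H$ at distance $>\pi/2$ from $h$) forces $d(h,v)=\pi/2$ for all $v\in V$, so by Proposition~\ref{P:LYTCHAK_SPANNING_SUBMETRY} $P_H$ is a submetry. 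To upgrade this to an isometry, note the fibers of $P_H$ are totally convex: if $x\in P_H^{-1}(h)$ with $h\in H$, then $d(x,v)\geq d(P_H x,v)=\pi/2$ for all $v\in V$, so $x\in\Pol(V)=H$, whence $P_H^{-1}(h)=\{h\}$ (and dually $P_H^{-1}(v)=\{v\}$ for $v\in V$). Finally, since $H$ is round, Grove--Petersen's radius sphere theorem identifies $H$ with a topological sphere and exhibits its rigidity; combined with the point fibers over $H$ and $V$ just established, this propagates to show $P_H$ has single-point fibers throughout $H\ast V$, hence is injective, hence an isometry.

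For (b), the map $p\colon CH\to C\Tau$ is a homogeneous submetry; applying Proposition~\ref{P: Horizontal and Vertical spaces of space of directions of submetry} to it produces its own vertical subcone $CW\subset CH$, horizontal part over $\Pol_{H}(W)$, and a factorization recursing one dimension down in spaces of directions. The crux is that $\rad(\Tau)>\pi/2$ forces $W=\varnothing$ and the fibers of $p|_H\colon H\to\Tau$ to be single points: a point of a nontrivial fiber would lie at distance $\geq\pi/2$ from a round subset of $\Tau$ after pushing distances forward, which is impossible. Then $p|_H$ is a distance-preserving bijection, i.e.\ an isometry, so $p$ is regular, and (b) follows. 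I expect this rigidity step — upgrading a submetry between Alexandrov spaces with $\curv\geq 1$ to an isometry using roundness and the radius sphere theorem, as it occurs in both (a) and (b) — to be the main obstacle; the surrounding bookkeeping with $\Pol$, $\widetilde H$, and the join $H\ast V$ is comparatively routine given the cited results.
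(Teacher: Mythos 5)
This proposition is quoted directly from \cite[Lemma~6.16]{Lytchak}; the paper supplies no proof of its own, so your attempt must be judged on its own terms.

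Your reduction of~(2) to~(1) is sound in outline: if $\Tau$ is round and $p|_H\colon H\to\Tau$ is an isometry, then Lemma~\ref{L: submetry restricted to horizontal part is an isometry implies space of directions splits into horizontal and vertical} gives almost-spanning, $H$ inherits roundness, and~(1) upgrades to spanning. The gaps are exactly at the two places you flag as the ``crux.'' In~(b), the claim that $\rad(\Tau)>\pi/2$ forces $W=\varnothing$ is misdirected: the vertical set of $p\colon CH\to C\Tau$ is empty \emph{automatically}, since by definition of $H$ every $h\in CH$ satisfies $|p(h)|=|h|$, so $p(h)=0$ only when $h=0$, with no appeal to roundness of $\Tau$. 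What roundness must actually deliver is that the fibers of $p|_H\colon H\to\Tau$ are singletons, and your sentence about a point of a nontrivial fiber lying ``at distance $\geq\pi/2$ from a round subset of $\Tau$ after pushing distances forward'' does not cash out --- a fiber is a subset of $H$, and $p$ is $1$-Lipschitz, so pushing forward cannot produce a lower distance bound of $\pi/2$ in $\Tau$.

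In~(a), the step ``$\rad(H)>\pi/2$ forces $d(h,v)=\pi/2$'' does not follow from the comparison you invoke: with $d(h,h')>\pi/2$, $d(h,v)>\pi/2$ and $d(h',v)\geq\pi/2$, the triangle perimeter in $\curv\geq1$ can still be below $2\pi$, so Toponogov gives no contradiction. The second step --- upgrading almost-spanning to spanning by invoking Grove--Petersen to ``propagate'' singleton fibers from $H\cup V$ to all of $H\ast V$ --- is likewise asserted rather than proved; the radius sphere theorem is purely topological, and a $1$-Lipschitz submetry that restricts to the identity on two totally convex polar subsets need not a priori be injective elsewhere. These are precisely the points where Lytchak's argument does the work the paper is deferring to.
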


We now recall the definition of differentiability for Lipschitz functions between Alexandrov spaces (cf.\ \cite[Section 3]{Lytchak}). Given a metric space $(X,d)$ and a real number $r>0$, we denote by~$rX$ the metric space $(X,rd)$.
Let $\{(X_i,x_i)\}_{i=1}^\infty$ and $(X,x)$ be pointed proper metric spaces.
The sequence $\{(X_i,x_i)\}$ \emph{converges in the pointed Gromov--Hausdorff sense to $(X,x)$} if, for each~${R>0}$, the sequence of closed balls $\{B_R(x_i)\}$ converges to $B_R(x)$ in the Gromov--Hausdorff sense. The following theorem gives an alternative characterization of the tangent cone of an Alexandrov space to the one in Section~\ref{S: Alexandrov spaces}.

\begin{thm}[\protect{\cite[Theorem~7.8.1]{BuragoGromovPerelman1992}}]
Let $X$ be an Alexandrov space, fix $x\in X$, and let $T_xX=C\Sigma_x(X)$ be the tangent cone to $X$ at $x$.
Then the pointed metric spaces $(\lambda X,x)$ converge in the pointed Gromov--Hausdorff sense to $(T_xX, 0)$ as $\lambda\to \infty$.
\end{thm}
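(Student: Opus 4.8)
The plan is to produce, for each $R>0$ and all large $\lambda$, an explicit pointed Gromov--Hausdorff approximation between the ball $B_R(x)\subset\lambda X$ and the ball $B_R(0)\subset T_xX=C\Sigma_x(X)$ whose error tends to $0$ as $\lambda\to\infty$; letting first $\lambda\to\infty$ and then $R\to\infty$ then gives the claimed convergence. Since $X$ is a complete, locally compact length space it is geodesic, so each point $p$ with $d(x,p)$ small is joined to $x$ by a minimizing geodesic $[xp]$; I would fix, for every such $p$, one such geodesic, write $\xi_p\in\Sigma_x(X)$ for its equivalence class, and define
\[
  \Phi_\lambda(p)=\bigl[\,\lambda\, d_X(x,p),\ \xi_p\,\bigr]\in C\Sigma_x(X).
\]
I would also record the elementary observation that rescaling by $\lambda$ turns the bound $\curv\geq k$ into $\curv\geq k/\lambda^2$, so on scale $1/\lambda$ the space $X$ is, up to an $O(1/\lambda)$ error, a space of $\curv\geq 0$ — which is exactly the curvature of the cone $T_xX$.

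\textbf{The two estimates.} The first point is that $\Phi_\lambda$ has $\varepsilon$-dense image in $B_R(0)$: the radial coordinate sweeps out all of $[0,R]$ along each $[xp]$, and the directions $\xi_p$ of geodesics issuing from $x$ are dense in $\Sigma_x(X)$ by the definition of $\Sigma_x(X)$ as a metric completion, this density being uniform because $\Sigma_x(X)$ is compact (a finite-dimensional Alexandrov space with $\curv\geq1$, by the corollaries of \cite{BuragoGromovPerelman1992} quoted above). The second point is that $\Phi_\lambda$ is almost distance-preserving. Writing $a=d_X(x,p)$, $b=d_X(x,q)$ and $\gamma=d_{\Sigma_x}(\xi_p,\xi_q)=\angle\,pxq$, the cone metric gives
\[
  d_{T_xX}\bigl(\Phi_\lambda(p),\Phi_\lambda(q)\bigr)=\lambda\sqrt{a^2+b^2-2ab\cos\gamma},
\]
whereas $\lambda\,d_X(p,q)$ equals $\lambda$ times the side opposite the comparison angle $\widetilde{\angle}_k(pxq)$ in the $M_k^2$-triangle with the other two sides $a,b$; here $\widetilde{\angle}_k(pxq)\leq\gamma$, with $\widetilde{\angle}_k(pxq)\to\gamma$ as $a,b\to 0$ by the monotonicity of comparison angles in a space of $\curv\geq k$. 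Since $a,b\leq R/\lambda\to0$, passing from the $M_k^2$ law of cosines to the Euclidean one costs only an $O(R^3/\lambda^2)$ error after multiplying by $\lambda$, so the two displayed quantities differ by at most some $\varepsilon(\lambda,R)\to0$. Hence $\Phi_\lambda$ is an $\varepsilon(\lambda,R)$-Gromov--Hausdorff approximation of pointed spaces, which yields $(\lambda X,x)\to(T_xX,0)$.

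\textbf{Main obstacle.} The genuine work is in the \emph{uniformity}: one needs $\widetilde{\angle}_k(\gamma_1(s)\,x\,\gamma_2(t))\to\angle(\gamma_1,\gamma_2)$ as $s,t\to0$ to hold uniformly over all pairs of geodesics $\gamma_1,\gamma_2$ from $x$ (so that $\varepsilon(\lambda,R)$ is really independent of $p,q$), and one needs the limiting angle to be recognised as the distance in the \emph{completed} space $\Sigma_x(X)$. I would extract both from the compactness of $\Sigma_x(X)$ together with the monotonicity in Toponogov's comparison theorem. Alternatively, the argument can be organised through Gromov precompactness of the family $\{(\lambda X,x):\lambda\geq1\}$ (all members have dimension $\dim X$ and $\curv$ bounded below by $\min(k,0)$), followed by the rigidity statement that any blow-up limit is a metric cone whose cross-section is isometric to $\Sigma_x(X)$ — the self-similarity of such a limit under further rescaling forcing the cone structure, and the geodesics emanating from $x$ exhibiting the identification of the cross-section with $\Sigma_x(X)$.
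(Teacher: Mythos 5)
This statement is quoted by the paper verbatim from Burago--Gromov--Perelman, so there is no in-paper proof to compare against; the comparison must be with the standard argument. Your outline correctly identifies both the natural approximation map $\Phi_\lambda(p)=[\lambda\, d_X(x,p),\xi_p]$ and the genuine difficulty, and the one-sided estimate you give (namely that $\widetilde{\angle}_k(pxq)\leq\gamma$, plus the $O(R^4/\lambda^2)$ law-of-cosines discrepancy between $M^2_k$ and $\R^2$, yields $\lambda\, d_X(p,q)\leq d_{T_xX}(\Phi_\lambda p,\Phi_\lambda q)+\varepsilon$) is sound, as is the density argument for almost-surjectivity.

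The gap is the reverse inequality, and your treatment of it does not close it. You need $\gamma-\widetilde{\angle}_k(\gamma_1(s),x,\gamma_2(t))$ to be small \emph{uniformly} over all pairs of geodesics $\gamma_1,\gamma_2$ emanating from $x$ and all $s,t\leq R/\lambda$, and you assert this follows from ``compactness of $\Sigma_x(X)$ together with monotonicity.'' As written this is an appeal to a Dini-type argument, but Dini is not immediate here: the parameter set is the collection of geodesic directions, which is merely a dense (not closed) subset of $\Sigma_x(X)$; the functions $(s,t)\mapsto\widetilde{\angle}_k(\gamma_1(s),x,\gamma_2(t))$ must be shown to depend continuously on $(\gamma_1,\gamma_2)$ jointly with $(s,t)$ all the way down to $s=t=0$, which is precisely the content one is trying to prove; and in general the monotone limit of continuous functions need not be attained uniformly without such continuity. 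So the sentence ``I would extract both from the compactness\ldots together with the monotonicity'' is not a proof of the step you yourself flag as the main obstacle --- it restates the obstacle. Your alternative route (Gromov precompactness of $\{(\lambda X,x)\}$, then showing every subsequential limit is a metric cone with cross-section $\Sigma_x(X)$) is in fact how the result is proved in BGP and in Burago--Burago--Ivanov, and avoids needing the pointwise uniformity directly; but you only wave at the rigidity step (``self-similarity forcing the cone structure''), which is where the actual work lives: one must show that the limit is invariant under rescaling, that rescaling-invariant nonnegatively curved Alexandrov spaces with a distinguished point are metric cones, and that the cross-section is isometric to $\Sigma_x(X)$ via the (a priori only densely defined) directions of geodesics. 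Until one of these two routes is carried through, the argument is an outline, not a proof.
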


Let $X$, and $Y$ be Alexandrov spaces, $U$ an open subset of $X$, and fix $x\in U$. Let $f\colon U\to Y$ be a Lipschitz map and let
$\{r_j\}$ be a sequence of positive real numbers such that $r_j\to 0$ as~${j\to\infty}$.
Then there exists a limit map
\[
f_{(r_j)} = \lim_\omega (f_j)\colon T_xX \to T_{f(x)}Y,
\]
where $f_j$ is the rescaled function
\[
f_j=f\colon\ \left(\frac{1}{r_j}X,x\right)\to \left(\frac{1}{r_j}Y,f(x)\right).
\]
We say that $f$ is \emph{differentiable at $x$}, if the map $f_{(r_j)}$ is independent of the sequence $\{r_j\}$. We call this uniquely defined Lipschitz function the \emph{differential of $f$ at $x$}, and denote it by ${\rm d}f_x\colon T_xX \to T_{f(x)}Y$. If the differential of $f$ exists at every point $x\in U$, then we say that $f$ is \emph{differentiable}. If $f$ is differentiable at $x\in U$, then its differential ${\rm d}f_x$ is homogeneous. That is, ${\rm d}f_x(tv) = t\,{\rm d}f(v)$ for each real number $t\geq 0$ and each $v\in T_xX$.

The following proposition shows that submetries may transfer geometric properties from the total space to the base space.

\begin{prop}[\protect{\cite[Proposition~4.4]{Lytchak}}]
\label{P: if X alex then Y alex}
Given a submetry $p\colon X\to Y$, the space $Y$ is an Alexandrov space when $X$ is an Alexandrov space.
\end{prop}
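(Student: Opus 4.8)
The claim is that a submetry $p\colon X \to Y$ forces $Y$ to be an Alexandrov space (with the same curvature bound) whenever $X$ is. The plan is to verify the three ingredients in the definition of an Alexandrov space for $Y$: (1) $Y$ is a complete length space; (2) $Y$ has finite Hausdorff dimension; and (3) $Y$ satisfies the local triangle comparison condition for the same lower curvature bound $k$ that $X$ enjoys. The first two are soft and I would dispatch them quickly; the third is the heart of the matter and I expect it to be the main obstacle.

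For the length-space and completeness part, I would argue as follows. Since $p$ is a submetry it is $1$-Lipschitz and surjective, so $Y$ inherits the metric-space structure and is a geodesic-length space: given $y_0, y_1 \in Y$ and a near pair of preimages $x_0 \in p^{-1}(y_0)$, $x_1 \in p^{-1}(y_1)$ realizing $d(x_0,x_1) = d(y_0,y_1)$ (such a pair exists because $X$ is a length space and balls map onto balls), a shortest path in $X$ from $x_0$ to $x_1$ projects to a curve in $Y$ of the same length, which must then be a geodesic in $Y$. Completeness of $Y$ follows because a Cauchy sequence in $Y$ can be lifted point-by-point to a bounded — hence, by local compactness in the Alexandrov setting, eventually precompact — set in $X$, or more cleanly: submetries push forward completeness since $p$ is $1$-Lipschitz and surjective and $X$ is complete (approximate preimages of a Cauchy sequence stay Cauchy up to small error). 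For finiteness of Hausdorff dimension, $1$-Lipschitz surjectivity gives $\dim_{\mathrm H} Y \le \dim_{\mathrm H} X < \infty$.

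The curvature comparison is where I would spend the effort. The strategy is to transport Toponogov-type comparison from $X$ to $Y$ using horizontal lifts of geodesics. Fix $y \in Y$, choose a neighborhood $U \ni y$ whose preimage lies in a region of $X$ satisfying $\mathrm{curv} \ge k$ comparison, and let $\triangle(y_0 y_1 y_2)$ be a small geodesic triangle in $U$. I would lift one side, say the geodesic from $y_0$ to $y_1$, to a horizontal geodesic $\tilde\gamma$ in $X$ starting at some $x_0 \in p^{-1}(y_0)$ and ending at $x_1$; then lift the geodesic from $y_0$ to $y_2$ to a horizontal geodesic starting at the \emph{same} $x_0$, ending at $x_2$. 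The key point is that $d(x_1,x_2) \ge d(y_1,y_2)$ always (since $p$ is $1$-Lipschitz) while the two lifted sides have exactly the model lengths $d(y_0,y_1)$ and $d(y_0,y_2)$, and the comparison angle at $x_0$ for the triangle $(x_0 x_1 x_2)$ is therefore no larger than the comparison angle at $y_0$ for $(y_0 y_1 y_2)$. Applying the curvature-$\ge k$ hinge/triangle comparison in $X$ to $(x_0 x_1 x_2)$, and then using $d(x_1,x_2) \ge d(y_1,y_2)$ together with monotonicity of the model-space distance in the opposite side length, yields the desired comparison inequality for $(y_0 y_1 y_2)$. One can equivalently phrase this via the differential/tangent-cone machinery recalled in the excerpt: $dp_x$ restricted to the horizontal cone is an isometry onto $T_{p(x)}Y$, and since $T_x X$ has $\mathrm{curv}\ge 0$ (it is a euclidean cone over an Alexandrov space of $\mathrm{curv}\ge 1$) and horizontal subcones are totally convex, $T_{p(x)}Y$ is again such a cone — giving the infinitesimal comparison — and then one globalizes.

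The main obstacle I anticipate is handling the lifts carefully so that both sides of the target triangle are lifted from a \emph{common} vertex in $X$ and remain genuinely horizontal (shortest) geodesics in $X$ of the correct lengths; one must ensure the neighborhood $U$ is taken small enough (using properness/local compactness and the local nature of the curvature bound in $X$) that these horizontal lifts exist, stay within the good region, and that the realized near pairs behave consistently. A secondary technical point is the passage from the hinge comparison for the single vertex $y_0$ to the full local triangle comparison characterization — but this is standard once one has angle monotonicity, and I would invoke the equivalence of the hinge and triangle versions of the comparison (as in Burago–Gromov–Perelman) to conclude.
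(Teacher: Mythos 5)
The paper does not prove this proposition; it cites it directly as \cite[Proposition~4.4]{Lytchak}, so there is no internal proof to compare against. Judged on its own, your outline has the right skeleton (lift to near points, use $1$-Lipschitzness, push the comparison in $X$ down to $Y$), but two things in the curvature part need fixing.

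First, the comparison-angle inequality is stated in the wrong direction. With $d(x_0,x_1)=d(y_0,y_1)$, $d(x_0,x_2)=d(y_0,y_2)$, and $d(x_1,x_2)\geq d(y_1,y_2)$, the law-of-cosines formula for $\measuredangle$ gives $\measuredangle(x_1,x_0,x_2)\geq \measuredangle(y_1,y_0,y_2)$, not ``no larger than.'' This is not a cosmetic slip: the correct direction is exactly what lets the argument close, because it is the Alexandrov $(1{+}3)$-point condition that transfers cleanly. If you lift three nearby points $y_1,y_2,y_3$ to near points $x_1,x_2,x_3$ over a fixed $x_0\in p^{-1}(y_0)$, then each comparison angle $\measuredangle_k(x_i,x_0,x_j)$ dominates $\measuredangle_k(y_i,y_0,y_j)$, and their sum in $X$ is at most $2\pi$ by $\curv\geq k$; hence the sum in $Y$ is at most $2\pi$, which is the $(1{+}3)$-point criterion for $\curv\geq k$ in $Y$.

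Second, the final sentence (``yields the desired comparison inequality'') does not follow from the lift you chose. In your setup the two geodesics are lifted from $x_0$, so there is no natural point in $X$ sitting over a point $w$ on the opposite side $[y_1,y_2]$, and the triangle-comparison statement (which is about $d(y_0,w)$) cannot be read off directly from the triangle $(x_0,x_1,x_2)$. Either switch to the $(1{+}3)$-point criterion as above (which only uses the six pairwise distances), or lift the \emph{base} $[y_1,y_2]$ together with $w$ horizontally from a point $x_1\in p^{-1}(y_1)$, choose $x_0\in p^{-1}(y_0)$ near to the lift $\tilde w$, and then combine $\curv\geq k$ in $X$ with the monotonicity of the model-space distance $|\tilde y_0\tilde w|$ in the two side lengths adjacent to $\tilde y_0$. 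Your present text gestures at such a monotonicity, but with the wrong side and without the change of lift it does not apply. As a minor point, $dp_x$ restricted to the horizontal cone is an isometry onto $T_{p(x)}Y$ only at regular points of $p$; in general it is merely a submetry, so the tangent-cone remark cannot be used as stated, and in any case an infinitesimal curvature bound does not globalize without further work.
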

	
Moreover, a submetry $p\colon X\to Y$ from an Alexandrov space has a well-defined differential.

\begin{prop}[\protect{\cite[Proposition 5.1]{Lytchak}}]
Let $p\colon X\to Y$ be a submetry from an Alexandrov space $X$ to $Y$ a metric space. Then $p$ is differentiable and each differential ${\rm d}f_x\colon T_x X\to T_{p(x)} Y$ is a homogeneous submetry.
\end{prop}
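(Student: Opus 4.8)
The plan is to identify $df_x$ with the blow-up limit of $p$ at $x$, and to show that this limit exists and is a submetry. First, by Proposition~\ref{P: if X alex then Y alex} the space $Y$ is an Alexandrov space, so $T_{p(x)}Y=C\Sigma_{p(x)}(Y)$ is well defined and, as $r\to 0$, the pointed spaces $(\tfrac{1}{r}X,x)$ and $(\tfrac{1}{r}Y,p(x))$ converge in the pointed Gromov--Hausdorff sense to $T_xX$ and to $T_{p(x)}Y$. For each $r>0$ the rescaled map $p_r=p\colon\tfrac{1}{r}X\to\tfrac{1}{r}Y$ is again a submetry, since multiplying all distances by $1/r$ turns $B_{r\varepsilon}(x)$ into $B_\varepsilon(x)$; and every submetry is $1$-Lipschitz, because $d(x_1,x_2)=\delta$ forces $x_2\in B_\delta(x_1)$ and hence $p(x_2)\in B_\delta(p(x_1))$. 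Thus differentiability of $p$ at $x$ amounts exactly to showing that the family $\{p_r\}_{r>0}$ has a single Gromov--Hausdorff limit as $r\to 0$.

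Next I would check that blow-up limits exist and are submetries. Given a sequence $r_j\to 0$, the maps $p_{r_j}$ are uniformly $1$-Lipschitz and the source and target spaces converge to proper limits, so by an Arzel\`a--Ascoli argument for maps between Gromov--Hausdorff converging spaces, after passing to a subsequence $p_{r_j}$ converges to a $1$-Lipschitz map $q\colon T_xX\to T_{p(x)}Y$ with $q(0)=0$. Any such limit is again a submetry: for $\xi\in T_xX$ and $\varepsilon>0$ pick $\xi_j\to\xi$ in $\tfrac{1}{r_j}X$, and for any $\eta\in B_\varepsilon(q(\xi))$ pick $\eta_j\to\eta$ in $\tfrac{1}{r_j}Y$; since $p_{r_j}$ is a submetry there are $\zeta_j\in B_{\varepsilon+o(1)}(\xi_j)$ with $p_{r_j}(\zeta_j)=\eta_j$, and a compactness argument in the proper limit spaces produces a subsequential limit $\zeta\in B_\varepsilon(\xi)$ with $q(\zeta)=\eta$; combined with $1$-Lipschitzness this gives $q(B_\varepsilon(\xi))=B_\varepsilon(q(\xi))$.

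The heart of the matter is \emph{uniqueness} of the blow-up limit, and here the submetry hypothesis --- rather than mere $1$-Lipschitzness --- is essential, since a general $1$-Lipschitz map between Alexandrov spaces need not be differentiable. I would show that any blow-up limit $q$ is pinned down by infinitesimal data of $p$ at $x$ that is independent of the sequence. For every geodesic $\gamma$ in $Y$ issuing from $p(x)$, fix a horizontal lift $\tilde\gamma$ through $x$; it is a geodesic with $p\circ\tilde\gamma=\gamma$ isometrically. Rescaling by $1/r_j$, the geodesics $\tilde\gamma$ and $\gamma$ blow up to the rays $s\mapsto s\tilde\xi$ in $T_xX$ and $s\mapsto s\xi$ in $T_{p(x)}Y$, where $\tilde\xi$, $\xi$ are their directions, and the exact identity $p\circ\tilde\gamma=\gamma$ survives in the limit, forcing $q(s\tilde\xi)=s\xi$ for all $s\geq 0$. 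Hence $q$ is determined, independently of the sequence, on the subcone over $H:=\overline{\{\tilde\xi\}}$, and $H$ lies in the set of horizontal vectors of $q$. Using the submetry property of $q$ together with the structural results recalled above for homogeneous submetries between Euclidean cones --- Proposition~\ref{P: Horizontal and Vertical spaces of space of directions of submetry} ($q^{-1}(0)=CV$ with $V=\Pol(H)$ and $H$ the full horizontal set), Lemma~\ref{L: submetry restricted to horizontal part is an isometry implies space of directions splits into horizontal and vertical}, Proposition~\ref{L: Base space of homogenous submetry is a disk, then p is regular}, and the canonical factorization $C\Sigma_x\to CH\times CV\to CH\to C\Sigma_{p(x)}$ --- one identifies $H$ with the horizontal set of $q$, $V=\Pol(H)$ with its vertical kernel, and the factor $CH\to C\Sigma_{p(x)}$ with the map already fixed on the (dense, by metric openness) set of lifted directions. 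This shows $q$ is unique, hence equal to $df_x$; being a Gromov--Hausdorff limit of the submetries $p_{r_j}$ it is a submetry, and being the differential of a Lipschitz map it is homogeneous, as noted above. Since $x$ was arbitrary, $p$ is differentiable.

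The main obstacle is the uniqueness step. Concretely, one must rule out oscillation of $p$ along geodesics --- i.e.\ show that the radial speed of $p\circ\gamma$ at $p(x)$ exists --- which is exactly where metric openness, rather than just $1$-Lipschitzness, is used; and one must show that $H=\overline{\{\tilde\xi\}}$ really is the full horizontal set of every blow-up $q$ (equivalently, that the lifted geodesic directions are dense there), which, without assuming regularity of $q$ a priori, requires the rigidity of homogeneous submetries between cones together with a careful check that the factorization data $H$, $V=\Pol(H)$, and $CH\to C\Sigma_{p(x)}$ are genuinely sequence-independent.
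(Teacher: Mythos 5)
The statement you are proving is, in this paper, a \emph{cited} result (Lytchak, Proposition 5.1): the paper reproduces the statement but supplies no proof of its own, so there is no in-paper argument to compare your sketch against.

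That said, your overall plan --- rescale, extract a subsequential Gromov--Hausdorff limit $q$ by Arzel\`a--Ascoli, show $q$ is a homogeneous submetry, then argue uniqueness of the limit --- is the natural one, and the first two stages are essentially fine (Alexandrov tangent cones are proper, submetries are $1$-Lipschitz and the submetry property passes to pointed GH limits). The genuine gap is exactly where you say it is, but it is not closed by the route you indicate, and there is a circularity you should worry about. You want to pin $q$ down by its values on the cone over $H := \overline{\{\tilde\xi\}}$ of lifted geodesic directions and then invoke the factorization $C\Sigma_x \to CH\times CV \to CH \to C\Sigma_{p(x)}$. Two problems. First, density of lift directions in $H_q$ does not follow from what you have written: you would need to know that every $h\in H_q$ with $q(h)$ a geodesic direction is itself the initial direction of a geodesic lift of $p$, and the tool that says this (the holonomy lemma \cite[Lemma~5.4]{Lytchak}, recalled in Section~\ref{S: Holonomy of submetries}) is stated in terms of the differential $dp_x$, i.e.\ presupposes exactly the differentiability you are trying to establish. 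Second, the factorization only determines $q$ once $H_q$ and $V_q$ are known to be sequence-independent, and you derive $V=\Pol(H)$ from an $H$ that you have not yet shown equals $H_q$; as written the argument goes in a circle $H\Rightarrow V\Rightarrow$ factorization $\Rightarrow H=H_q$.

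A way to break the circle, which your sketch does not use, is to make the vertical side the anchor instead of the horizontal side: for any blow-up limit $q$ one has $q^{-1}(0)=T_x\bigl(p^{-1}(p(x))\bigr)$ (compare Proposition~\ref{P: Tangent space to leaf is cone of vertical space}), because $p$ being a submetry gives $d(y,p^{-1}(p(x)))=d(p(y),p(x))$ and hence the rescaled fiber converges to $q^{-1}(0)$; this identifies $V_q$ with a purely metric, sequence-independent object, and then $H_q=\Pol(V_q)$ is sequence-independent too, after which your computation of $q$ on lifted directions, together with the cone factorization, can close the argument. Even so, one must then still check that the fiber is a well-behaved metric space at $x$ (Theorem~\ref{T: Fibers of submetry have the same metric properties as ambient space} gives local Lipschitz equivalence of metrics on components, but the proposition as stated does not assume $p$ proper), so the uniqueness step genuinely requires care beyond what the sketch provides. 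In short: right plan, honestly flagged obstacle, but the uniqueness argument as proposed is circular and incomplete.
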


Let $p\colon X\to Y$ be a submetry from an Alexandrov space $X$ to $Y$ a metric space. A vector $v\in T_xX$ is \emph{vertical} if ${\rm d}f_x(v)=0$. We call $v$ horizontal if $|{\rm d}f_x(v)|=|v|$. From the homogeneity of the differential, one may show that the set of vertical vectors forms a subcone $CV_x$ of $T_xX$, where $V_x\subset \Sigma_xX\subset T_xX$. Similarly, the set of horizontal vectors forms a subcone $CH_x$ of $T_xX$, where $H_x\subset \Sigma_xX\subset T_xX$.

A submetry $p\colon X\to Y$ between arbitrary Alexandrov spaces is \emph{regular} at a point $x\in X$ if the homogeneous submetry ${\rm d}p_x\colon T_xX \to T_{p(x)}Y$ is regular. We call the leaf $L_x$ a \emph{regular leaf}. Similarly, $p\colon X\to Y$ is \emph{$($almost$)$ spanning} at $x\in X$ if ${\rm d}p_x\colon T_xX \to T_{p(x)}Y$ is (almost) spanning. A point $x\in X$ is \emph{round} if $\Sigma_x(X)$ is a round space. The following corollary follows immediately from Proposition~\ref{L: Base space of homogenous submetry is a disk, then p is regular}.

\begin{cor}\label{L: submetry over a manifold point is regular}
A submetry $p\colon X \to Y$ from an Alexandrov space $X$ is regular over each round point $y\in Y$.
\end{cor}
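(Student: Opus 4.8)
The plan is to reduce the statement to the infinitesimal picture and apply Proposition~\ref{L: Base space of homogenous submetry is a disk, then p is regular}\,(2). First I would fix a round point $y\in Y$ and an arbitrary preimage $x\in p^{-1}(y)$. Since $p\colon X\to Y$ is a submetry from an Alexandrov space, by the proposition following Proposition~\ref{P: if X alex then Y alex} the map $p$ is differentiable at $x$ and its differential $dp_x\colon T_xX\to T_yY$ is a homogeneous submetry between the euclidean cones $T_xX = C\Sigma_x(X)$ and $T_yY = C\Sigma_y(Y)$. By definition, $p$ is regular at $x$ precisely when this homogeneous submetry is regular in the sense of the previous subsection, i.e.\ when the restriction of $dp_x$ to the horizontal set $H_x\subset\Sigma_x(X)$ is an isometry onto $\Sigma_y(Y)$. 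So the whole question is about the single homogeneous submetry $dp_x$.

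The key point is to identify the base cone of $dp_x$ as a round space. Since $y$ is a round point, $\Sigma_y(Y)$ is by definition a round Alexandrov space with $\curv\ge 1$, and hence $T_yY = C\Sigma_y(Y)$ is the euclidean cone over a round space. Now I apply Proposition~\ref{L: Base space of homogenous submetry is a disk, then p is regular}\,(2) to the homogeneous submetry $dp_x\colon C\Sigma_x(X)\to C\Sigma_y(Y)$: taking $\Tau = \Sigma_y(Y)$, which is round, the proposition gives that $dp_x$ is regular and spanning. By the definition of regularity for submetries between arbitrary Alexandrov spaces, this says exactly that $p$ is regular at $x$. Since $x\in p^{-1}(y)$ was arbitrary, $p$ is regular over $y$, and since $y$ was an arbitrary round point of $Y$, the corollary follows.

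I do not anticipate a genuine obstacle here; the statement is a direct translation of Proposition~\ref{L: Base space of homogenous submetry is a disk, then p is regular}\,(2) through the two layers of definitions (``regular submetry between Alexandrov spaces'' $=$ ``regular homogeneous submetry on differentials'', and ``round point'' $=$ ``round space of directions''). The only thing to be careful about is that ``round'' in the hypothesis refers to a \emph{point} of $Y$ whereas the cited proposition refers to the \emph{space} $\Tau$ being round; one must note that $\Tau = \Sigma_y(Y)$ is exactly the space of directions and that $\rad(\Sigma_y(Y))>\pi/2$ is the definition of $y$ being a round point. Everything else is a formal unwinding, so the proof is a single short paragraph; I would simply write: ``This is immediate from Proposition~\ref{L: Base space of homogenous submetry is a disk, then p is regular}\,(2) applied to the homogeneous submetry $dp_x\colon T_xX\to T_yY$, since $T_yY = C\Sigma_y(Y)$ is the euclidean cone over the round space $\Sigma_y(Y)$.''
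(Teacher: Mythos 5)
Your proposal is correct and matches the paper's intent exactly: the paper states only that the corollary follows immediately from Proposition~\ref{L: Base space of homogenous submetry is a disk, then p is regular}(2), and your proof is precisely the unwinding of definitions (round point $\Leftrightarrow$ $\Sigma_y(Y)$ round, regular submetry at $x$ $\Leftrightarrow$ $dp_x$ a regular homogeneous submetry) that makes that assertion precise.
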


The following theorem tells us that every Alexandrov space has a dense subset of points whose space of directions is a unit round sphere, and are therefore round points (in the sense defined before Proposition~\ref{P: Horizontal and Vertical spaces of space of directions of submetry}).

\begin{thm}[\protect{\cite[Theorem~10.8.5]{BuragoBuragoIvanov} and \cite[Corollary 10.9.13]{BuragoBuragoIvanov}}]
\label{T:geometrically.regular.points.are.dense}
If $X$ is an $n$-dimensional Alexandrov space, then
the set of points whose space of directions is a unit round sphere $($or, equivalently, whose tangent cone is Euclidean space$)$ is dense in $X$.
\end{thm}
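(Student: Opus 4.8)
The plan is to combine three standard ingredients: density of $(n,\delta)$-strained points for every $\delta>0$; the Baire category theorem, to produce a single point that is $(n,\delta)$-strained for all $\delta$ simultaneously; and a compactness-plus-rigidity argument showing that such a point has space of directions isometric to $\Sp^{n-1}$ (equivalently, Euclidean tangent cone). Assume $n\geq1$, the case $n=0$ being trivial. Recall that $p\in X$ is \emph{$(n,\delta)$-strained} if there are $n$ pairs of points $(a_i,b_i)$ with $\measuredangle(a_i,p,b_i)>\pi-\delta$ and, for all $i\neq j$, each of $\measuredangle(a_i,p,a_j)$, $\measuredangle(a_i,p,b_j)$, $\measuredangle(b_i,p,b_j)$ greater than $\pi/2-\delta$ (if the lower curvature bound of $X$ is $\kappa\neq0$ one uses the $\kappa$-comparison angle, which only strengthens these conditions). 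Since comparison angles depend continuously on the base point, the set $X_\delta$ of $(n,\delta)$-strained points is open, and it is classical that $X_\delta$ is dense: this is a repackaging of the fact that $X$ has Hausdorff dimension $n$, proved by an induction establishing density of the set of $(k,\delta)$-strained points for $k=0,1,\dots,n$, the point being that on an open set to which no further almost-orthogonal direction can be adjoined the space would be a countable union of subsets of Hausdorff dimension $\leq k<n$. Applying the Baire category theorem to the complete space $X$, the set $R=\bigcap_{m\geq1}X_{1/m}$ of points that are $(n,1/m)$-strained for every $m$ is dense, so it suffices to show that every $p\in R$ has $\Sigma_p$ isometric to $\Sp^{n-1}$.

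Fix $p\in R$. Its space of directions $\Sigma_p$ is a compact Alexandrov space with $\curv\geq1$ and $\dim\Sigma_p=n-1$. For each $m$ choose a strainer $(a_i^m,b_i^m)$ witnessing $(n,1/m)$-strainedness, and let $\xi_i^m,\eta_i^m\in\Sigma_p$ be the directions at $p$ of minimizing geodesics to $a_i^m$ and $b_i^m$. Since the angle at $p$ between two geodesics equals the distance of their directions in $\Sigma_p$ and dominates the corresponding comparison angle, we get $d_{\Sigma_p}(\xi_i^m,\eta_i^m)>\pi-1/m$, while $d_{\Sigma_p}(\xi_i^m,\xi_j^m)$, $d_{\Sigma_p}(\xi_i^m,\eta_j^m)$, $d_{\Sigma_p}(\eta_i^m,\eta_j^m)>\pi/2-1/m$ for $i\neq j$. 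By compactness of $\Sigma_p$ and a diagonal argument, pass to a subsequence so that $\xi_i^m\to\xi_i$ and $\eta_i^m\to\eta_i$ for all $i$; then $d_{\Sigma_p}(\xi_i,\eta_i)=\pi$ (it is $\geq\pi$ and $\leq\diam\Sigma_p\leq\pi$) and every cross-distance among the $\xi_i,\eta_i$ is $\geq\pi/2$.

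I would finish with the following rigidity statement, proved by induction on $m\geq1$: an $(m-1)$-dimensional Alexandrov space $\Sigma$ with $\curv\geq1$ containing points $\xi_1,\dots,\xi_m,\eta_1,\dots,\eta_m$ with $d(\xi_i,\eta_i)=\pi$ and $d(\xi_i,\xi_j),d(\xi_i,\eta_j),d(\eta_i,\eta_j)\geq\pi/2$ for $i\neq j$ is isometric to $\Sp^{m-1}$. For $m=1$ a $0$-dimensional space with $\curv\geq1$ containing two points at distance $\pi$ is $\Sp^0$. For $m\geq2$, since $d(\xi_1,\eta_1)=\pi$ the rigidity part of the maximal diameter theorem for Alexandrov spaces gives a spherical suspension splitting $\Sigma\cong\Sp^0\ast\Sigma'$ with poles $\xi_1,\eta_1$, where $\Sigma'$ has $\curv\geq1$, $\dim\Sigma'=m-2$, and every point of $\Sigma$ lies on a minimizing meridian from $\xi_1$ to $\eta_1$. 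For $j\geq2$ this forces $d(\xi_1,\xi_j)+d(\eta_1,\xi_j)=\pi$, and together with $d(\xi_1,\xi_j),d(\eta_1,\xi_j)\geq\pi/2$ it gives $d(\xi_1,\xi_j)=\pi/2$; hence $\xi_j$, and likewise each $\eta_j$ with $j\geq2$, lies on the equator $\{\pi/2\}\times\Sigma'$, which is isometric to $\Sigma'$. These $2(m-1)$ equatorial points satisfy the hypotheses of the statement in $\Sigma'$, so by induction $\Sigma'\cong\Sp^{m-2}$ and $\Sigma\cong\Sp^0\ast\Sp^{m-2}=\Sp^{m-1}$. Applying this with $m=n$ to $\Sigma_p$ and the limit configuration yields $\Sigma_p\cong\Sp^{n-1}$, equivalently $T_pX=C\Sigma_p\cong\R^n$; since $R$ is dense, the set of such $p$ is dense, which is the assertion.

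The substantive input is the density of $(n,\delta)$-strained points, which is essentially the dimension theory of Alexandrov spaces; in a self-contained treatment this would be the main obstacle, with the rigidity form of the maximal diameter theorem a close second. Granting these two classical facts, the Baire-category and compactness steps and the short induction above are routine. One point of care throughout: $\Sigma_p$ and $T_pX$ are infinitesimal objects, carrying $\curv\geq1$ and $\curv\geq0$ respectively, so the comparison-angle estimates are most safely organized at that infinitesimal scale, or via the $\kappa$-comparison angle, rather than with Euclidean comparison angles when the global lower curvature bound $\kappa$ is negative.
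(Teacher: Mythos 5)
The paper does not prove this statement; it is quoted directly from Burago--Burago--Ivanov (Theorem 10.8.5 and Corollary 10.9.13), so there is no in-paper proof to compare against. Your argument is correct and reconstructs the textbook route underlying those citations: openness and density of the sets $X_\delta$ of $(n,\delta)$-strained points, the Baire category theorem to extract a dense set of points that are $(n,\delta)$-strained for every $\delta>0$, compactness of $\Sigma_p$ to produce a limit configuration of $n$ antipodal pairs with mutual distances at least $\pi/2$, and then the rigidity form of the maximal-diameter theorem applied inductively to conclude $\Sigma_p\cong\Sp^{n-1}$. The inductive step is carried out cleanly: the diameter-$\pi$ pair forces a spherical suspension, the identity $d(\xi_1,\xi_j)+d(\eta_1,\xi_j)=\pi$ together with both summands being at least $\pi/2$ pins the remaining directions to the equator, and the equator inherits the hypotheses with the index decreased by one. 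Your closing caveat about using $\kappa$-comparison angles rather than Euclidean ones when the lower bound $\kappa$ is negative correctly flags the only genuine pitfall, and all of the cited inputs (density of strainers, suspension rigidity, the $0$-dimensional base case) are standard and correctly deployed.
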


Recall that, given a submetry $p\colon X\to Y$ between metric spaces, the intrinsic metric on the fiber $L = p^{-1}(y)$ is locally Lipschitz-equivalent to the induced metric on $L$ (see Theorem~\ref{T: Fibers of submetry have the same metric properties as ambient space}).

The following proposition allows us to identify, for any $x\in X$, the space of vertical directions~${V_x\subset \Sigma_xX}$ with $\Sigma_x(L)$, the space of di\-rections of the leaf $L$ through $x$.

\begin{prop}[\protect{\cite[Proposition~5.2]{Lytchak}}]
\label{P: Tangent space to leaf is cone of vertical space}
Let $p\colon X\to Y$ be a submetry between Alexandrov spaces. Then, for any fiber $L = p^{-1}(y)$ and any point $x\in L$, the tangent cone $T_x L $ is the cone~$CV_x$.
\end{prop}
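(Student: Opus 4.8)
The plan is to realize the asserted identification $T_xL=CV_x$ as a Gromov--Hausdorff limit of the obvious rescaling, splitting the work into a ``soft'' identification of the underlying cones and one genuinely delicate metric point. First recall the ingredients in place: since $p$ is a proper submetry and $X$ is Alexandrov, the fibre $L=p^{-1}(y)$, equipped with its intrinsic length metric $d_L$, is itself an Alexandrov space (by Theorem~\ref{T: Fibers of submetry have the same metric properties as ambient space} together with the observation preceding the present statement), so $T_xL=C\Sigma_x(L)$ is defined and, by \cite[Theorem~7.8.1]{BuragoGromovPerelman1992}, equals the pointed Gromov--Hausdorff limit of $\bigl(\tfrac1r(L,d_L),x\bigr)$ as $r\to 0$. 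Moreover $p$ is differentiable at $x$ (\cite[Proposition~5.1]{Lytchak}), with differential $dp_x\colon T_xX\to T_{p(x)}Y$ a homogeneous submetry whose fibre over the vertex is, by definition of vertical vectors, the subcone $CV_x$, and $CV_x$ is \emph{totally convex} in $T_xX$ by Proposition~\ref{P: Horizontal and Vertical spaces of space of directions of submetry}. Finally I will repeatedly use the elementary identity $d(z,L)=d(p(z),y)$, valid for every $z\in X$: since $p\bigl(B_r(z)\bigr)=B_r(p(z))$, the ball $B_r(z)$ meets $L=p^{-1}(y)$ exactly when $d(p(z),y)\le r$.

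The first, soft step is to identify the rescalings of $L$ \emph{with the induced (extrinsic) metric}. Viewing $\tfrac1rL$ as a subset of $\tfrac1rX$ and using $\bigl(\tfrac1rX,x\bigr)\to(T_xX,0)$, the sets $\tfrac1rL$ Hausdorff-converge inside $T_xX$ to a closed subset $L_\infty$, and I claim $L_\infty=CV_x$. For $L_\infty\subseteq CV_x$: if a sequence of points of $\tfrac1{r_k}L$ converges to $w\in T_xX$, then, because the rescaled maps $p\colon\tfrac1{r_k}X\to\tfrac1{r_k}Y$ converge to $dp_x$ and $p$ is constantly $y$ on $L$ (so the images rescale to the vertex of $T_{p(x)}Y$), one gets $dp_x(w)=0$, i.e. $w\in CV_x$. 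For the reverse inclusion: given $v\in CV_x$, approximate it by points $v_r\in\tfrac1rX$; then by the distance identity $d_{\frac1rX}\bigl(v_r,\tfrac1rL\bigr)=d_{\frac1rY}\bigl(p(v_r),y\bigr)\to|dp_x(v)|=0$, so $v$ is a limit of points of $\tfrac1rL$. Hence $L_\infty=CV_x$, and since $CV_x$ is totally convex in $T_xX$ its induced and intrinsic metrics coincide; therefore $\bigl(\tfrac1r(L,d|_L),x\bigr)$ converges in the pointed Gromov--Hausdorff sense to $(CV_x,d_{CV_x})$.

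The remaining step — which I expect to be the main obstacle — is to replace $d|_L$ by the intrinsic metric $d_L$ in the limit just obtained; equivalently, to show that $d_L$ and $d|_L$ are \emph{infinitesimally isometric at $x$}, i.e. $d_L(x,q)/d(x,q)\to 1$ as $q\to x$ in $L$. Theorem~\ref{T: Fibers of submetry have the same metric properties as ambient space}~(2) only provides local bi-Lipschitz equivalence, which by itself yields $T_xL=CV_x$ merely as a homogeneous bi-Lipschitz homeomorphism, not as an isometry (a bi-Lipschitz homogeneous map of cones may genuinely distort radial distances). The mechanism that forces equality is that a minimizing $X$-geodesic joining two nearby points of $L$ stays $o(r)$-close to $L$ at scale $r$: after rescaling it converges to a minimizing geodesic of $T_xX$ joining two points of the totally convex cone $CV_x=dp_x^{-1}(0)$, hence lies entirely in $CV_x=L_\infty$; feeding this back through the distance identity and the fine fibre structure of submetries (the refinement of Theorem~\ref{T: Fibers of submetry have the same metric properties as ambient space} in \cite[\S7]{Lytchak}) upgrades the bi-Lipschitz comparison to an infinitesimal isometry. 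Granting this, the pointed Gromov--Hausdorff limit $T_xL$ of $\tfrac1r(L,d_L)$ equals $(CV_x,d_{CV_x})$; passing to unit vectors then identifies $\Sigma_x(L)$ with $V_x$, as asserted. Everything preceding the infinitesimal-isometry step is formal manipulation of Gromov--Hausdorff limits and of the submetry distance identity; that last upgrade from a bi-Lipschitz to an isometric identification is where the real content lies.
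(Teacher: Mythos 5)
This proposition is cited in the paper from \cite[Proposition~5.2]{Lytchak} without proof, so there is no in-paper argument to compare against; I can only assess your attempt on its own terms. The ``soft'' half is sound: the distance identity $d(z,L)=d(p(z),y)$ follows at once from the submetry condition, and combined with the convergence of the rescaled submetries $p\colon\tfrac1r X\to\tfrac1r Y$ to $dp_x$, it correctly identifies the Hausdorff limit of $\tfrac1r L$ inside $T_xX$ with $CV_x$; total convexity of $CV_x$ (Proposition~\ref{P: Horizontal and Vertical spaces of space of directions of submetry}) then gives that the metric induced on $CV_x$ from $T_xX$ is the cone metric, so $\bigl(\tfrac1r(L,d|_L),x\bigr)\to(CV_x,d_{CV_x})$.

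The gap is exactly where you flag it: upgrading from the extrinsic limit to the intrinsic one, i.e.\ showing $d_L(x,q)/d(x,q)\to 1$ as $q\to x$ in $L$. Your mechanism --- that a minimizing $X$-geodesic $\gamma$ from $x$ to $q$ stays $o(r)$-close to $L$ at scale $r=d(x,q)$, because its rescaled limit is a geodesic between two points of the totally convex $CV_x$ --- is a true statement, but it does not by itself deliver the estimate. Subdividing $\gamma$ and projecting the subdivision points onto $L$ produces, via Theorem~\ref{T: Fibers of submetry have the same metric properties as ambient space}~(2), only $d_L(x,q)\le C\,d(x,q)+o(d(x,q))$ where $C$ is the local Lipschitz constant; this merely reproduces the bi-Lipschitz bound, not the sharp $1+o(1)$. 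What is missing is the argument that the Lipschitz comparison between $d_L$ and $d|_L$ tightens to $1$ at small scale (an ``infinitesimal flatness'' of $L$ in $X$), and your appeal to ``the fine fibre structure in \cite[\S 7]{Lytchak}'' is precisely where that content would have to come from, yet it is invoked without being made explicit. As written, then, the proposal is a reduction of Lytchak's proposition to another unstated result of the same source, not an independent proof; it is incomplete rather than wrong.
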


Let $I$ be a closed interval. Given $f\colon X	\to Y$ and $\alpha\colon I\to Y$, a curve $\tilde{\alpha}\colon I\to X$ is a \emph{lift of $\alpha$} if $f\circ\tilde{\alpha}= \alpha$.

\begin{lem}[\protect{existence of geodesic lifts: \cite[Lemma~1]{GuijarroWalschap2011} and \cite[Lemma~2.1]{BeretovskiiGuijarro2000}}]
\label{L: Existence and uniqueness of lifts of geodesics}
Let $f\colon X \to Y$ be a submetry between Alexandrov spaces. For any geodesic $\alpha\colon [0, a] \to Y$, and any $x \in f^{-1}(\alpha(0))$, there is a geodesic lift $\tilde{\alpha}\colon [0, a] \to X$ of $\alpha$ starting at $x$, with length equal to that of $\alpha$. Moreover, if the geodesic $\alpha$ between $\alpha(0)$ and $\alpha(a)$ is unique, then the lift $\tilde{\alpha}$ is unique.
\end{lem}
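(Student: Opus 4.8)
The statement to prove is \th\ref{L: Existence and uniqueness of lifts of geodesics}: existence of geodesic lifts for a submetry $f\colon X \to Y$ between Alexandrov spaces.

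\medskip

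\noindent\textbf{Proof proposal.}

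The plan is to construct the lift by a limiting/compactness argument using the defining property of a submetry, namely that $f$ maps closed balls onto closed balls. First I would reduce to the case where $\alpha$ is a unit-speed geodesic $\alpha\colon[0,a]\to Y$ and fix $x\in f^{-1}(\alpha(0))$. For each $n$, I would partition $[0,a]$ into $n$ equal subintervals $0=t_0<t_1<\dots<t_n=a$ and build a polygonal path inductively: having chosen $\tilde x_0=x$ and points $\tilde x_1,\dots,\tilde x_j$ with $f(\tilde x_i)=\alpha(t_i)$ and $d(\tilde x_{i-1},\tilde x_i)=d(\alpha(t_{i-1}),\alpha(t_i))=t_i-t_{i-1}$, use the submetry property: since $\alpha(t_{j+1})\in B_{t_{j+1}-t_j}(\alpha(t_j))=f(B_{t_{j+1}-t_j}(\tilde x_j))$, there is $\tilde x_{j+1}\in B_{t_{j+1}-t_j}(\tilde x_j)$ with $f(\tilde x_{j+1})=\alpha(t_{j+1})$. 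Connecting consecutive points by shortest paths (which exist because $X$ is a complete, locally compact length space, hence proper by Hopf--Rinow for Alexandrov spaces) yields a curve $\tilde\alpha_n\colon[0,a]\to X$ of length $\le a$; but since $f$ is $1$-Lipschitz and $f\circ\tilde\alpha_n$ joins $\alpha(0)$ to $\alpha(a)$ through all the $\alpha(t_i)$ at the correct parameter-distances, the length of $\tilde\alpha_n$ is in fact exactly $a$, so each $\tilde\alpha_n$ is a unit-speed path.

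Next I would extract a limit: all the curves $\tilde\alpha_n$ are $1$-Lipschitz maps $[0,a]\to X$ with image contained in the compact ball $\bar B_a(x)$, so by Arzelà--Ascoli a subsequence converges uniformly to a $1$-Lipschitz curve $\tilde\alpha\colon[0,a]\to X$. By continuity of $f$, $f\circ\tilde\alpha=\alpha$, so $\tilde\alpha$ is a lift; also $\tilde\alpha(0)=x$ and $d(\tilde\alpha(0),\tilde\alpha(a))=d(\alpha(0),\alpha(a))=a\ge \mathrm{length}(\tilde\alpha)$, forcing $\tilde\alpha$ to be a unit-speed shortest path, i.e.\ a geodesic of length $a=\mathrm{length}(\alpha)$. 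Moreover $f$ restricted to $\tilde\alpha$ is an isometry onto $\alpha$ (a $1$-Lipschitz surjection between intervals preserving endpoint distance). For the uniqueness statement: if the shortest path $\alpha$ from $\alpha(0)$ to $\alpha(a)$ is the unique one in $Y$, and $\tilde\beta$ is any geodesic lift starting at $x$, then $f\circ\tilde\beta$ is a path from $\alpha(0)$ to $\alpha(a)$ of length $\le\mathrm{length}(\tilde\beta)=a$, hence equals $\alpha$; then near each $s$, the lift is forced: given $\tilde\beta(s)$, the point $\tilde\beta(s')$ for $s'$ close to $s$ must be the unique point near $\tilde\beta(s)$ lying over $\alpha(s')$ at distance $|s'-s|$ — this local uniqueness comes from the fact that a horizontal shortest path realizing the distance between nearby leaves is unique when the corresponding geodesic downstairs is unique, which one extracts from the local geometry of horizontal lifts discussed in the excerpt (\emph{near points} and horizontal geodesics).

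\medskip

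\noindent The main obstacle I anticipate is the uniqueness half: establishing that the lift is globally unique from the uniqueness of $\alpha$ downstairs requires more than just $f\circ\tilde\beta=\alpha$, since a priori distinct horizontal lifts of the same geodesic segment could emanate from the same point. The cleanest route is to show local uniqueness of horizontal lifts — i.e.\ for $s'$ near $s$, among all points over $\alpha(s')$ within distance $|s-s'|$ of $\tilde\beta(s)$ there is only one — and then propagate it along $[0,a]$ by a standard connectedness argument (the set of parameters where two lifts agree is nonempty, closed, and open). For the local uniqueness one uses that the points $\tilde\beta(s)$ and $\tilde\beta(s')$ are near with respect to $f$, so the segment between them is horizontal, and horizontality together with the uniqueness of the short geodesic between $\alpha(s)$ and $\alpha(s')$ in $Y$ pins down the endpoint; this is exactly the content invoked from \cite{GuijarroWalschap2011} and \cite{BeretovskiiGuijarro2000}, so I would cite the local-to-global mechanism there rather than reprove it. The existence half, by contrast, is a routine compactness argument once properness of $X$ (guaranteed by $X$ being a finite-dimensional Alexandrov space) is in hand.
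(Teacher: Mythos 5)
Your existence argument is correct and is essentially the standard one: lift a fine partition of $\alpha$ using the ball-preservation property of the submetry, join consecutive lifted points by shortest segments (which exist since a finite-dimensional complete Alexandrov space is locally compact, hence proper by Hopf--Rinow), observe that the lengths telescope to exactly $a$, extract a uniform subsequential limit by Arzel\`a--Ascoli, and conclude the limit is a geodesic lift because $f$ is $1$-Lipschitz and preserves the endpoint distance. The paper itself gives no proof but cites \cite{GuijarroWalschap2011,BeretovskiiGuijarro2000}, so this half is fine.

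The uniqueness half has a genuine gap, and it sits exactly where you flag "the main obstacle." The local uniqueness you propose --- that for $s'$ near $s$, among points of $f^{-1}(\alpha(s'))$ at distance $|s'-s|$ from $\tilde\beta(s)$ there is only one --- is false at a point where $df$ does not map horizontal directions injectively onto $\Sigma_{f(\cdot)}Y$, i.e.\ at a non-regular point of the submetry. The simplest instance is $f\colon\mathbb{R}^2\to[0,\infty)$, $f(v)=|v|$, with $\alpha(t)=t$ on $[0,1]$ (the unique geodesic from $0$ to $1$) and $x$ the origin: every unit ray is a geodesic lift of $\alpha$ from $x$, and every point of the circle $|v|=s'$ is over $\alpha(s')$ at distance $s'$ from $x$. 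Saying that the segment from $\tilde\beta(s)$ to $\tilde\beta(s')$ is horizontal and that $\alpha|_{[s,s']}$ is unique downstairs does not pin down $\tilde\beta(s')$: horizontality is automatic for near points, and uniqueness downstairs only forces $f\circ\tilde\beta=\alpha$, which you already had. The missing hypothesis is that the initial direction of $\alpha$ has a \emph{unique} horizontal lift in $\Sigma_x X$ (equivalently, that $df_x|_{H_x}$ is an isometry, i.e.\ $\pi$ is regular at $x$, or that $T_xX$ splits as $CH_x\times CV_x$). That is precisely the hypothesis the paper supplies every time it actually uses the uniqueness: Proposition~\ref{L: holonomy maps are surjective and continuous} assumes "its initial direction has a unique lift in $H_x$," and the proof of Theorem~\ref{T:MAIN_THM}~\ref{T:MAIN_THM_iii} first establishes the splitting $C(\Sigma_x(X))=C(H_x)\times C(V_x)$ before invoking this lemma. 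With that hypothesis in hand, your local-to-global scheme does close: uniqueness of the horizontal initial direction together with \cite[Lemma~5.4]{Lytchak} (each horizontal direction determines a unique horizontal geodesic lift) gives local uniqueness, and the open/closed argument propagates it along $[0,a]$. Without it, the uniqueness claim as you have tried to prove it is simply false.
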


\subsection{Holonomy maps of a submetry between Alexandrov spaces}\label{S: Holonomy of submetries}

Let $f\colon X\to Y$ be a submetry between Alexandrov spaces. Assume we have a geodesic $\alpha\colon [0,b]\to Y$ with $\alpha(0)=y$, $\alpha(b)=z$, and initial direction $w\in \Sigma_y Y$. By \cite[Lemma 5.4]{Lytchak} for $x\in f^{-1}(y)$ fixed, for each $h\in H_x$ with ${\rm d} f_x(h)=w$, there exists a unique geodesic $\tilde{\alpha}_{h}\colon [0,b]\to X$ starting at~$x$ with initial direction $h$, such that $f\circ\tilde{\alpha}_{h} = \alpha$.
If there exists a unique direction $\tilde{w}\in H_x$ such that $d f_x(\tilde{w})=w$, then we have a well-defined map $p_\alpha\colon f^{-1}(y)\to f^{-1}(z)$ given by
$
p_\alpha(x) = \tilde{\alpha}_{\tilde{w}}(b)$.
This map is called the \emph{holonomy map of $\alpha$}.

Consider a geodesic $\alpha\colon [0,b]\to Y$ such that, for each $x\in f^{-1}(\alpha(0))$ the initial direction of $\alpha$ has a unique lift in $H_x$ and $p_\alpha$ is well defined. As observed in \cite[Section 7.3]{Lytchak} for $\tilde{z}\in f^{-1}(\alpha(b))$, the set $p^{-1}_\alpha(\tilde{z})$ corresponds to the endpoints of all geodesics of $X$ starting at $\tilde{z}$ which are horizontal lifts of the geodesic $\alpha^{-1}(t)=\alpha(b-t)$ and whose initial direction is a horizontal lift of the initial direction of $\alpha^{-1}$.

\begin{prop}\label{L: holonomy maps are surjective and continuous}
Let $f\colon X\to Y$ be a submetry between Alexandrov spaces and
suppose $\alpha\colon [0,b]\to Y$ is a geodesic such that for each $x\in f^{-1}(\alpha(0))$ its initial direction has a unique lift in $H_x$, and the holonomy map $p_\alpha$ is well defined. Then the holonomy map $p_\alpha$ is continuous and surjective.
\end{prop}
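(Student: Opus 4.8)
The plan is to prove continuity and surjectivity of the holonomy map $p_\alpha$ separately, using the description of fibers of $p_\alpha$ via horizontal geodesic lifts recalled just before the statement.

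For \emph{surjectivity}, fix $\tilde z\in f^{-1}(\alpha(b))$. As noted after \cite[Section 7.3]{Lytchak}, one wants to realize $\tilde z$ as $p_\alpha(x)$ for some $x\in f^{-1}(\alpha(0))$; equivalently, one needs a horizontal geodesic lift of the reversed geodesic $\alpha^{-1}(t)=\alpha(b-t)$ starting at $\tilde z$, whose initial direction is a horizontal lift of the initial direction of $\alpha^{-1}$. The existence of such a lift is exactly Lemma~\ref{L: Existence and uniqueness of lifts of geodesics} (existence of geodesic lifts) applied to the geodesic $\alpha^{-1}\colon[0,b]\to Y$ and the point $\tilde z\in f^{-1}(\alpha^{-1}(0))$: we obtain a geodesic lift $\widetilde{\alpha^{-1}}$ starting at $\tilde z$ with $f\circ\widetilde{\alpha^{-1}}=\alpha^{-1}$ and length equal to that of $\alpha^{-1}$, so this lift is horizontal and its initial direction lies in $H_{\tilde z}$ over the initial direction of $\alpha^{-1}$. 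Setting $x=\widetilde{\alpha^{-1}}(b)\in f^{-1}(\alpha(0))$, the reversed curve $t\mapsto \widetilde{\alpha^{-1}}(b-t)$ is the unique horizontal lift $\tilde\alpha_{\tilde w}$ of $\alpha$ through $x$ with initial direction the unique horizontal lift $\tilde w\in H_x$ of $\alpha$'s initial direction (uniqueness is by hypothesis), hence $p_\alpha(x)=\tilde\alpha_{\tilde w}(b)=\tilde z$. This gives surjectivity.

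For \emph{continuity}, take a sequence $x_i\to x$ in $f^{-1}(\alpha(0))$; we must show $p_\alpha(x_i)\to p_\alpha(x)$. Let $\tilde w_i\in H_{x_i}$ and $\tilde w\in H_x$ be the (unique) horizontal lifts of $\alpha$'s initial direction at $x_i$ and $x$. Each horizontal lift $\tilde\alpha_{\tilde w_i}$ is a geodesic in $X$ of length $b$ emanating from $x_i$, with $f\circ\tilde\alpha_{\tilde w_i}=\alpha$. Since $X$ is proper (Alexandrov spaces are complete length spaces of finite dimension; fibers are compact by properness of the relevant arguments, but in fact we only need local compactness of $X$ here), the family $\{\tilde\alpha_{\tilde w_i}\}$ is equicontinuous and has images in a bounded set, so by Arzelà--Ascoli a subsequence converges uniformly to a geodesic $\beta\colon[0,b]\to X$ with $\beta(0)=x$. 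By continuity of $f$, $f\circ\beta=\alpha$, so $\beta$ is a horizontal lift of $\alpha$ through $x$. Its initial direction $\beta'(0)$ is horizontal and satisfies $df_x(\beta'(0))=\alpha'(0)$; by the uniqueness hypothesis $\beta'(0)=\tilde w$, and then by the uniqueness clause in \cite[Lemma 5.4]{Lytchak} (or equivalently the uniqueness part of Lemma~\ref{L: Existence and uniqueness of lifts of geodesics} along the horizontal direction) we conclude $\beta=\tilde\alpha_{\tilde w}$. Since every subsequential limit is $\tilde\alpha_{\tilde w}$, the full sequence $\tilde\alpha_{\tilde w_i}$ converges uniformly to $\tilde\alpha_{\tilde w}$; evaluating at $b$ gives $p_\alpha(x_i)=\tilde\alpha_{\tilde w_i}(b)\to\tilde\alpha_{\tilde w}(b)=p_\alpha(x)$, as desired.

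The main obstacle is the continuity argument: one must ensure that the limiting geodesic $\beta$ obtained from Arzelà--Ascoli genuinely has the right initial direction, so that the uniqueness hypothesis forces $\beta=\tilde\alpha_{\tilde w}$. This requires knowing that $df_x$ is continuous enough to pass $df_{x_i}(\tilde w_i)=\alpha'(0)$ to the limit, and that convergence of geodesics in $X$ implies convergence of their initial directions in $\Sigma_x X$ — a standard but slightly delicate fact in Alexandrov geometry (it follows from the first variation / angle semicontinuity, cf. \cite{BuragoBuragoIvanov}). An alternative, cleaner route avoiding initial-direction bookkeeping is to use the fiber description: $p_\alpha^{-1}(p_\alpha(x))$ is closed (endpoints of horizontal lifts of $\alpha^{-1}$), combine with openness of $p_\alpha$ coming from the submetry property of $f$ restricted along horizontal lifts, and deduce continuity from the closed-graph-type argument; but the Arzelà--Ascoli approach above is the most direct.
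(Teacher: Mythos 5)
Your proof is correct and follows essentially the same route as the paper's: surjectivity by lifting the reversed geodesic $\alpha^{-1}$ from $\tilde{z}$ and reversing that lift, and continuity by passing to a limit of the horizontal lifts $\tilde{\alpha}_i$ and invoking the uniqueness hypothesis. Your explicit Arzel\`a--Ascoli step is a small improvement in rigor: the paper's displayed chain of distance equalities tacitly presupposes that $\lim_i\tilde{\alpha}_i(t)$ exists, whereas you extract a subsequential limit from compactness, identify it uniquely, and then deduce full convergence.

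One reassurance about the concern you raise in your final paragraph: it is unnecessary. Once Arzel\`a--Ascoli yields a geodesic $\beta\colon[0,b]\to X$ of length $b$ with $\beta(0)=x$ and $f\circ\beta=\alpha$, the equalities $d_X(\beta(s),\beta(t))=|s-t|=d_Y(\alpha(s),\alpha(t))=d_Y(f(\beta(s)),f(\beta(t)))$ show that $\beta$ is mapped isometrically onto $\alpha$ by $f$, hence $\beta$ is a horizontal geodesic and so $\beta'(0)\in H_x$ with $df_x(\beta'(0))=\alpha'(0)$ automatically. The uniqueness hypothesis applied at the single point $x$ then forces $\beta'(0)=\tilde{w}$, and the uniqueness of a horizontal geodesic with prescribed initial direction (\cite[Lemma~5.4]{Lytchak}) gives $\beta=\tilde{\alpha}_{\tilde{w}}$. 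One never needs to pass $df_{x_i}(\tilde{w}_i)$ to a limit, nor to invoke convergence of initial directions in $\Sigma_x(X)$; the argument only uses properties of the single limit curve $\beta$ at the fixed point $x$. (Also note that the uniqueness clause of Lemma~\ref{L: Existence and uniqueness of lifts of geodesics} concerns a different situation, namely when $\alpha$ is the unique geodesic between its endpoints, so it is not interchangeable with \cite[Lemma~5.4]{Lytchak} here.)
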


\begin{proof}

We first prove the surjectivity of the holonomy map $p_\alpha$. Consider $z\in f^{-1}(\alpha(b))$. For the geodesic $\alpha^{-1}(t)= \alpha(b-t)$, there exists a horizontal lift \smash{$\widetilde{\alpha^{-1}}$} in $X$ with \smash{$\widetilde{\alpha^{-1}}(0)=z$}. Set \smash{$x=\widetilde{\alpha^{-1}}(b)$}, and observe that $f(x) = \alpha(0)$ by construction. The geodesic $\smash{\bigl(\widetilde{\alpha^{-1}}\bigr)^{-1}(t)} = \smash{\widetilde{\alpha^{-1}}(b-t)}$ is a horizontal lift of $\alpha$ starting at $x$. Since the horizontal lifts of $\alpha$ are unique by hypothesis, we conclude that \smash{$p_\alpha(x) = \widetilde{\alpha^{-1}}(0) = z$}.

We now prove that $p_\alpha$ is continuous. Fix $x\in f^{-1}(\alpha(0))$ and let $\{x_i\}_{i\in \N}\subset f^{-1}(\alpha(0))$ be a~sequence converging to $x$ in the intrinsic metric of the fiber. By Theorem~\ref{T: Fibers of submetry have the same metric properties as ambient space}, the intrinsic and the induced metric are locally Lipschitz equivalent on the fiber, hence also $d_X(x_i,x)\to 0$ as~${i\to \infty}$.
Let $\tilde{\alpha}_i\colon [0,b]\to X$ be the unique horizontal lift of $\alpha$ with $\tilde{\alpha}_i(0) = x_i$. Let $\tilde{\alpha}\colon [0,b]\to X$ be the unique lift of $\alpha$ with $\tilde{\alpha}(0) = x$. To simplify the notation, set $y_i = p_{\alpha}(x_i) = \tilde{\alpha}_i(b)$ and~${y = p_\alpha(x) = \tilde{\alpha}(b)}$. We will show that $y_i\to y$.

For each $t\in [0,b]$,
\[
d_X(\tilde{\alpha}_i(t),x)\leq d_X(\tilde{\alpha}_i(t),x_i)+d_X(x_i,x) \leq b+ d_X(x,x_i).
\]
Thus, for all sufficiently large $i$, we have that $\tilde{\alpha}_i([0,b])$ is contained in the closed ball $B_{b+1}(x)$, which is a compact subset of $X$. In particular, $\{y_i\}_{i\in\mathbb{N}}$ is contained in a compact subset of $X$.

Let $\{y_{i_j}\}_{i\in\mathbb{N}}$ be an arbitrary convergent subsequence with $y_{i_j}\to y'$ in $X$. Let us show that~${y' = y}$. Since the geodesics $\tilde{\alpha}_{i_j}\colon [0,b]\to B_{b+1}(x)\subset X$ have the same length and are contained in a compact subset of $X$, the Arzel\`a--Ascoli theorem (see \cite[Theorem 2.5.14]{BuragoBuragoIvanov}) implies that there exists a sub-subsequence (which we will not relabel) $\tilde{\alpha}_{i_j}$ that converges uniformly to a~curve $\beta\colon [0,b]\to X$, which must necessarily be a geodesic (see \cite[Proposition 2.5.17]{BuragoBuragoIvanov}). By the continuity of $f$,
\[
f\circ \beta = \lim_{j\to \infty}f\circ \tilde{\alpha}_{i_j} = \lim_{j\to\infty}\alpha = \alpha.
\]
Hence, $\beta$ is a horizontal lift of $\alpha$. Observe now that
\[
\beta(0) = \lim_{j\to\infty}\tilde{\alpha}_{i_j}(0) = \lim_{j\to\infty} x_{i_j} = x.
\]
Since we have assumed that lifts starting at the same point are unique, we must have $\beta = \tilde{\alpha}$. Taking endpoints, we get
\[
y' = \lim_{j\to\infty}y_{i_j} = \lim_{j\to \infty}\tilde{\alpha}_{i_j}(b) = \beta(b) = \tilde{\alpha}(b) = y.
\]
Therefore, every convergent subsequence of $\{y_i\}_{i\in \mathbb{N}}$ converges to $y$. Since $\{y_i\}_{i\in \mathbb{N}}$ is contained in a compact subset of $X$, it follows that $y_i\to y$, i.e., $p_{\alpha}(x_i) \to p_\alpha(x)$. Thus, $p_\alpha$ is continuous.
\end{proof}

\begin{cor}\label{L: holonomy maps over regular leaves are homeomorphisms}
Let $f\colon X\to Y$ be a submetry between Alexandrov spaces. If $\alpha\colon [0,b]\to Y$ is a geodesic whose endpoints are regular points, then the holonomy map $p_\alpha$ is well defined and is a homeomorphism.
\end{cor}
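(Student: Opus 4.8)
The plan is to reduce everything to Proposition~\ref{L: holonomy maps are surjective and continuous} plus a symmetry argument. First I would check that the holonomy map $p_\alpha$ is well defined. The endpoint $y=\alpha(0)$ is a regular point, so by definition the homogeneous submetry $dp_x\colon T_xX\to T_yY$ is regular for every $x\in f^{-1}(y)$, i.e.\ the restriction $dp_x\colon H_x\to \Sigma_y Y$ is an isometry (after passing to spaces of directions). In particular, the initial direction $w\in\Sigma_yY$ of $\alpha$ has a \emph{unique} horizontal lift $\tilde w\in H_x$. Combined with the geodesic-lifting statement from \cite[Lemma~5.4]{Lytchak} recalled before Proposition~\ref{L: holonomy maps are surjective and continuous}, this gives a unique horizontal lift $\tilde\alpha_{\tilde w}$ of $\alpha$ starting at $x$, so $p_\alpha(x)=\tilde\alpha_{\tilde w}(b)$ is unambiguously defined; the same regularity at the other endpoint $z=\alpha(b)$ will be used below. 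Hence the hypotheses of Proposition~\ref{L: holonomy maps are surjective and continuous} are met, and $p_\alpha$ is continuous and surjective.

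Next I would produce a continuous inverse. Consider the reversed geodesic $\alpha^{-1}(t)=\alpha(b-t)$, which runs from the regular point $z$ to the regular point $y$. Since $z$ is regular, the same argument as above shows that the initial direction of $\alpha^{-1}$ has a unique horizontal lift in $H_{\tilde z}$ for every $\tilde z\in f^{-1}(z)$, and the holonomy map $p_{\alpha^{-1}}\colon f^{-1}(z)\to f^{-1}(y)$ is well defined; by Proposition~\ref{L: holonomy maps are surjective and continuous} it is continuous and surjective as well. It remains to see that $p_{\alpha^{-1}}$ and $p_\alpha$ are mutually inverse. This is exactly the content of the description recalled from \cite[Section~7.3]{Lytchak}: given $x\in f^{-1}(y)$, the lift $\tilde\alpha$ of $\alpha$ through $x$ is also, read backwards, a horizontal lift of $\alpha^{-1}$ starting at $p_\alpha(x)$ whose initial direction is a horizontal lift of the initial direction of $\alpha^{-1}$; by uniqueness of such lifts this backwards geodesic is the one computing $p_{\alpha^{-1}}$, so $p_{\alpha^{-1}}(p_\alpha(x))=\tilde\alpha(0)=x$. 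The symmetric computation gives $p_\alpha(p_{\alpha^{-1}}(\tilde z))=\tilde z$ for $\tilde z\in f^{-1}(z)$. Therefore $p_\alpha$ is a continuous bijection with continuous inverse $p_{\alpha^{-1}}$, i.e.\ a homeomorphism.

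The only genuinely delicate point is the bookkeeping about \emph{which} horizontal lift computes the inverse holonomy: one must make sure that the initial direction of the backwards geodesic $\tilde\alpha(b-t)$ is indeed the \emph{unique} element of $H_{p_\alpha(x)}$ mapping to the initial direction of $\alpha^{-1}$, rather than merely \emph{a} horizontal lift. This follows because a horizontal geodesic maps isometrically onto $\alpha$, hence its reverse maps isometrically onto $\alpha^{-1}$ and is therefore horizontal, and its initial direction is horizontal; regularity at $z$ then forces uniqueness. I expect this verification, together with citing the lifting lemmas correctly, to be the main (and essentially the only) obstacle; the rest is a formal consequence of Proposition~\ref{L: holonomy maps are surjective and continuous}.
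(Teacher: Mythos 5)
Your proposal is correct and follows essentially the same route as the paper: use regularity at both endpoints to define $p_\alpha$ and $p_{\alpha^{-1}}$, observe they are mutually inverse, and invoke Proposition~\ref{L: holonomy maps are surjective and continuous} for continuity. You spell out the ``mutually inverse'' step more carefully than the paper (which dismisses it with ``by construction''), which is a small but genuine improvement in exposition.
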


\begin{proof}
Let $y = \alpha(0)$ and $z=\alpha(b)$. By definition, for $x\in f^{-1}(y)$ we have $df_x\colon H_x\to \Sigma_y Y$ is an isometry. Thus, the holonomy map $p_\alpha$ is well defined. Moreover, for $\tilde{z}\in f^{-1}(z)$, the holonomy map $p_{\alpha^{-1}}$ is also well defined for the geodesic $\alpha^{-1}(t)= \alpha(b-t)$. Additionally, $p_{\alpha^{-1}}(p_{\alpha}(x))=x$ by construction. By Proposition~\ref{L: holonomy maps are surjective and continuous}, both $p_\alpha$ and $p_{\alpha^{-1}}$ are continuous. Thus, the conclusion follows.
\end{proof}

The following assertion is now an immediate consequence of Corollary~\ref{L: holonomy maps over regular leaves are homeomorphisms}.

 \begin{cor}\label{C: regular fibers are homeomorphic}
 Let $f\colon X\to Y$ be a submetry between Alexandrov spaces. Then the regular fibers of $f$ are homeomorphic.
 \end{cor}

\subsection{Isometry groups of Alexandrov spaces}
\label{S: Isometry group of an Alexandrov space}
As for Riemannian manifolds, the isometry group of an Alexandrov space admits a Lie group structure.
Additionally, there is an optimal upper bound on the dimension of the isometry group and a rigidity result holds when this bound is attained.

	\begin{thm}[\protect{\cite[Theorem 1.1]{FukayaYamaguchi1994} and \cite[Theorems 3.1 and 4.1]{GalazGarciaGuijarro2013}}]
	\label{T: Dimension of Isometry group}
Let $(X,d)$ be an $n$-dimensional Alexandrov space. Then the following assertions hold:
\begin{enumerate}[label=$(\arabic*)$]\itemsep=0pt
\item The group $G$ of isometries of $X$ is a Lie group and is compact if $X$ is compact.\label{T: Dimension of Isometry group i}
\item The dimension of $G$ satisfies $\dim (G) \leq n(n+1)/2$.
\item If $\dim (G) = n(n+1)/2$, then $X$ is a Riemannian manifold isometric to one of the following spaces of constant sectional curvature: the $n$-dimensional Euclidean space, an $n$-dimen\-sional round sphere, an $n$-dimensional round real projective space, or an $n$-dimensional hyperbolic space with constant negative sectional curvature.
		\end{enumerate}
\end{thm}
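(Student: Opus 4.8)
\emph{Proof proposal.} The plan is to handle the three assertions in order, in each case transferring the question from $X$ to the isotropy action on a space of directions, and for the rigidity statement reducing further to the classical Riemannian situation.

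For (1): I would first record that an $n$-dimensional Alexandrov space $X$ is proper (complete, locally compact and geodesic, hence proper by Hopf--Rinow--Cohn--Vossen) and has topological dimension equal to $n$ (Burago--Gromov--Perelman), and is locally contractible, hence locally connected. Since isometries are distance preserving, $\Iso(X)$ is an equicontinuous family of maps, so Arzel\`a--Ascoli shows that in the compact-open topology $\Iso(X)$ is a locally compact topological group acting effectively on $X$ --- and compact when $X$ is compact. The remaining point is that a locally compact group acting effectively on a connected, locally compact, locally connected space of finite topological dimension must be a Lie group; this is the Montgomery--Zippin theorem, a consequence of the Gleason--Yamabe structure theory that resolves Hilbert's fifth problem.

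For (2): set $G=\Iso(X)$ and, using Theorem~\ref{T:geometrically.regular.points.are.dense}, pick $x\in X$ whose space of directions $\Sigma_x X$ is the round unit sphere $\Sp^{n-1}$. As $X$ is proper, $G$ acts properly, so the orbit $G\cdot x$ is an embedded submanifold and $\dim(G\cdot x)\le\dim X=n$. The isotropy group $G_x$ maps to $\Iso(\Sigma_x X)=\Iso(\Sp^{n-1})=\mathrm{O}(n)$ by $g\mapsto dg_x$; its kernel consists of isometries fixing $x$ with trivial differential there, and the key infinitesimal rigidity lemma --- such an isometry fixes a whole neighborhood of $x$, hence all of $X$ since $X$ is connected --- makes this homomorphism injective, so $\dim G_x\le\dim\mathrm{O}(n)=n(n-1)/2$. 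Then $\dim G=\dim(G\cdot x)+\dim G_x\le n+n(n-1)/2=n(n+1)/2$.

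For (3): if $\dim G=n(n+1)/2$ then both inequalities above are forced to be equalities. Hence $\dim(G\cdot x)=n$, which (with $X$ connected and the action proper) forces $G$ to act transitively, so $X\cong G/G_x$ is homogeneous; and $\dim G_x=n(n-1)/2$, so the image of $G_x\hookrightarrow\mathrm{O}(n)$ has full dimension and $(G_x)^0=\mathrm{SO}(n)$ acts on $\Sigma_x X=\Sp^{n-1}$ standardly, in particular transitively on directions. By homogeneity every space of directions is then a round $\Sp^{n-1}$, so $X$ is a topological manifold; transitivity of the isotropy on unit tangent spheres (with the Montgomery--Samelson--Wang classification of transitive actions on spheres) upgrades $X$ to an isotropic Riemannian manifold, which must have constant sectional curvature. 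The complete such manifolds with $\dim\Iso=n(n+1)/2$ are exactly Euclidean $\R^n$, the round sphere $\Sp^n$, the round real projective space $\RP^n$, and hyperbolic $n$-space, giving the stated list. I expect the hard part to be this last assertion: proving the infinitesimal rigidity lemma (which in the Alexandrov setting requires controlling the local geometry near a fixed point via the blow-up to the tangent cone), extracting genuine transitivity of $G$ from the dimension equality rather than merely a top-dimensional orbit, and making the passage from a homogeneous Alexandrov manifold with maximal isotropy to a smooth Riemannian space form rigorous; assertions (1) and (2) are comparatively routine given the structural facts about Alexandrov spaces recalled above.
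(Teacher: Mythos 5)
The statement is recorded in the paper purely as a background result, with explicit citations to Fukaya--Yamaguchi and Galaz-Garc\'ia--Guijarro; the paper offers no proof of its own, so there is nothing internal to compare your sketch against. I will therefore assess the sketch on its own terms.

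Parts (2) and (3) of your outline are on the right track and follow the expected route: pick a round point, use injectivity of the isotropy representation (via the infinitesimal rigidity lemma) together with the orbit dimension to get the bound, and in the equality case deduce transitivity, round spaces of directions, isotropy, and constant curvature. There is, however, a genuine gap in part (1). You assert that a locally compact group acting effectively on a connected, locally compact, locally connected, finite-dimensional space is automatically a Lie group, attributing this to Montgomery--Zippin and the Gleason--Yamabe theory. This is not a theorem: even restricted to actions on topological manifolds, this is precisely the Hilbert--Smith conjecture, which remains open in general (it is known only in low dimensions and for restricted classes of homeomorphisms). The Montgomery--Zippin theorem you seem to have in mind concerns \emph{transitive} actions, which is not what one has here a priori. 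What Fukaya--Yamaguchi actually establish is the weaker --- and sufficient --- assertion that $\Iso(X)$ has no small subgroups, and this is where the Alexandrov (metric) structure is essential: roughly, the same infinitesimal rigidity you invoke in part (2) (an isometry fixing a point with trivial differential there is the identity, cf. Lemma~\ref{L: Isotropy action acts effectivelly}) is used to show that a sufficiently small neighborhood of the identity in $\Iso(X)$ contains no nontrivial subgroup. Only then does the Gleason--Yamabe structure theorem deliver the Lie group conclusion. As written, your step (1) proves too much and omits the place where the metric structure enters.
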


\begin{rem}\label{R: isometry group of local. Alexandrov space is Lie}
The conclusions of Theorem~\ref{T: Dimension of Isometry group}\,\ref{T: Dimension of Isometry group i} also hold for a complete length space $(X,d)$ which is \emph{locally an Alexandrov space}. That is, for each $x\in X$, there exist a neighborhood~${U_x\subset X}$ and a real number $k_x$ such that $(U_x,\restr{d}{U_x})$ is an Alexandrov space with $\mathrm{curv}\geq k$.
\end{rem}

Let $g\colon X\to X$ be an isometry of an Alexandrov space $X$, and suppose that there exists~${x\in X}$ such that $g(x) = x$. Then the differential ${\rm d}g_x\colon T_x X\to T_x X$ induces an isometry on the space of directions $\Sigma_x(X)\subset T_xX = C\Sigma_x(X)$. Let $G\subset \Iso(X)$ be a group acting on $X$ isometrically. Given $x\in X$, we define the \emph{orbit of $G$ through $x$} as $G(x) = \{g\cdot x\mid g\in G\}$. The \emph{isotropy subgroup at $x$} is the group $G_x = \{g\in G\mid g\cdot x = x\}$. The \emph{isotropy representation} of $G_x$ into $\Iso(\Sigma_x(X))$ is given by setting $g\cdot v = {\rm d} g_x(v)$ for $g\in G_x$ and $v\in \Sigma_x(X)$.
The action of $G$ on~$X$ is \emph{effective} if $\bigcap_{x\in X} G_x = \{e\}$. The following lemma implies that, if $G$ acts effectively on~$X$, then for any~${x\in X}$ the action of $G_x$ on $\Sigma_x(X)$ given by the isotropy representation is also effective.

\begin{lem}[\protect{\cite[Lemma~3.2]{GalazGarciaGuijarro2013}}]
\label{L: Isotropy action acts effectivelly}
Let $X$ be an Alexandrov space and let $g\colon X\to X$ an isometry. If there is some $x_0\in X$ such that $g(x_0)=x_0$ and ${\rm d}g_{x_0}\colon \Sigma_{x_0}(X)\to \Sigma_{x_0}(X)$ is the identity, then~${g(x)=x}$ for all $x\in X$.
\end{lem}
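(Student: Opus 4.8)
The statement to prove is: if $g\colon X\to X$ is an isometry of an Alexandrov space, $g(x_0)=x_0$, and $dg_{x_0}=\Id$ on $\Sigma_{x_0}(X)$, then $g=\Id_X$. I want to reconstruct the proof of this local-rigidity fact.

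**Approach.** The plan is to show that the fixed-point set $\Fix(g)=\{x\in X\mid g(x)=x\}$ is simultaneously closed, nonempty, and open in $X$; connectedness of $X$ then forces $\Fix(g)=X$. Closedness is immediate from continuity of $g$, and nonemptiness holds since $x_0\in\Fix(g)$. So the whole content is the openness statement, which I would localize to a single point: it suffices to prove that if $g(x)=x$ and $dg_x=\Id$ on $\Sigma_x(X)$, then $g$ is the identity on a neighborhood of $x$, and moreover that $dg_y=\Id$ at every nearby fixed point $y$ (so that the argument propagates). The second bookkeeping point is handled by noting that if $\gamma$ is a minimizing geodesic from $x$ to a point $y$ fixed by $g$ that is close enough to be joined by a unique such geodesic, then $g\circ\gamma$ is again a minimizing geodesic from $x$ to $y$; since $dg_x=\Id$ the geodesic $g\circ\gamma$ has the same initial direction as $\gamma$, and geodesics in an Alexandrov space are determined near their start by their starting point and initial direction, so $g\circ\gamma=\gamma$ and in particular $g$ fixes every point of $\gamma$. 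Taking directions at $y$ of the images of geodesics through $y$ then shows $dg_y$ fixes a dense set of directions in $\Sigma_y(X)$, hence $dg_y=\Id$ by continuity.

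**Key steps, in order.**
First, record that $\Fix(g)$ is closed and contains $x_0$.
Second, fix $x\in\Fix(g)$ with $dg_x=\Id_{\Sigma_x(X)}$, and choose $r>0$ small enough that every point $y\in B_r(x)$ is joined to $x$ by a minimizing geodesic, and that $\exp_x$-type uniqueness of short geodesics from $x$ with a given initial direction holds on this scale (this is the standard fact that in an Alexandrov space two minimizing geodesics from $x$ with the same direction coincide on a neighborhood of $x$; use that the angle between them is $0$ and the comparison inequality). For an arbitrary $y\in B_r(x)$, let $\gamma\colon[0,\ell]\to X$ be a minimizing geodesic from $x$ to $y$. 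Then $g\circ\gamma$ is a minimizing geodesic from $g(x)=x$ to $g(y)$, of the same length, and its initial direction is $dg_x(\gamma'(0))=\gamma'(0)$.
Third, conclude $g\circ\gamma=\gamma$ on a subinterval $[0,\varepsilon]$, hence on all of $[0,\ell]$ by a standard connectedness-along-the-geodesic argument (the set of $t$ with $g\circ\gamma\equiv\gamma$ on $[0,t]$ is closed and open in $[0,\ell]$, using at an interior point $t$ that $dg_{\gamma(t)}$ must be the identity on the direction $\gamma'(t)$ and its reverse, together with the uniqueness of geodesic extensions). In particular $g(y)=\gamma(\ell)=y$, so $B_r(x)\subset\Fix(g)$, proving openness — but to keep the induction going I also need $dg_y=\Id$ for $y\in\Fix(g)\cap B_r(x)$.
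Fourth, for such $y$: the geodesics $\gamma$ from $x$ through $y$, and more generally short minimizing geodesics emanating from $y$ that I can reach as above, have directions at $y$ that form a dense subset of $\Sigma_y(X)$ (density of directions of minimizing geodesics is a standard Alexandrov fact), and $dg_y$ fixes each such direction because $g$ fixes the geodesic pointwise; by continuity of the isometry $dg_y$ of $\Sigma_y(X)$, $dg_y=\Id$.
Fifth, assemble: $\Fix(g)$ is nonempty, closed, and — by the above applied at each of its points, all of which satisfy the differential hypothesis — open; $X$ connected gives $\Fix(g)=X$.

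**Main obstacle.** The delicate point is Step 3: making rigorous the claim that a geodesic in an Alexandrov space is locally determined by its starting point and initial direction, and that this persists along the geodesic. In a Riemannian manifold this is just ODE uniqueness for the geodesic equation, but in an Alexandrov space one must instead argue via the first variation / comparison: if two minimizing geodesics issue from $x$ with angle $0$ between them, the comparison inequality forces them to agree on a short initial segment, and one propagates this using that the space of directions at interior points is itself Alexandrov of curvature $\ge 1$ and the "no branching of geodesics near a point with the same direction" phenomenon. The subtlety is that geodesics in Alexandrov spaces can branch in general, so one must be careful to work only at the scale where minimizing geodesics from a fixed point with prescribed direction are unique, and to verify that the fixed-point set picks up whole geodesic segments rather than isolated points. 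I would cite the relevant local structure results (e.g. from \cite{BuragoBuragoIvanov} or \cite{BuragoGromovPerelman1992}) for the uniqueness-of-short-geodesics and density-of-geodesic-directions facts rather than reprove them.
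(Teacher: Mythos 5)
The lemma is cited from~\cite[Lemma~3.2]{GalazGarciaGuijarro2013} and the present paper contains no proof of it, so there is no ``paper's own proof'' to compare against; what follows is an assessment on the merits and against the standard argument in the cited reference.

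Your core idea is the right one: a minimizing geodesic $\gamma$ from a fixed point $x$ is carried by $g$ to a minimizing geodesic from $x$ with the same initial direction, and uniqueness of minimizing geodesics with prescribed starting point and initial direction then forces $g\circ\gamma=\gamma$. But you wrap this in an open--closed argument on $\Fix(g)$ together with an induction that propagates the condition $dg_y=\Id$, and this is both unnecessary and not fully closed up. In Step~5 you assert that all points of $\Fix(g)$ ``satisfy the differential hypothesis,'' but your argument only establishes $dg_y=\Id$ for $y$ in the ball $B_r(x_0)$ (where $g$ has already been shown to be the identity); it does not show this for an arbitrary $z\in\Fix(g)$, which is what the open--closed scheme actually needs (one would have to prove that the set $S=\{x:g(x)=x,\ dg_x=\Id\}$ is closed, and that step is missing). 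The ``Main obstacle'' you flag — that geodesics in Alexandrov spaces ``can branch in general'' — is a misconception: the lower curvature bound precludes branching. More precisely, the monotonicity of comparison angles implies that two minimizing geodesics issuing from $x$ with angle $0$ between them coincide on their \emph{entire} common domain, not merely on an initial segment; and geodesics also do not branch at interior points. This removes the need for any localization or propagation.

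Once this is noted, the proof collapses to a direct global argument with no open--closed bookkeeping: a finite-dimensional Alexandrov space is a proper geodesic space (Hopf--Rinow), so for any $y\in X$ choose a minimizing geodesic $\gamma\colon[0,\ell]\to X$ from $x_0$ to $y$. Since $g$ is an isometry fixing $x_0$, the curve $g\circ\gamma$ is a minimizing geodesic from $x_0$ to $g(y)$, and its initial direction is $dg_{x_0}(\gamma'(0))=\gamma'(0)$. By the rigidity of the angle comparison for spaces with curvature bounded below, two minimizing geodesics from $x_0$ with angle $0$ at $x_0$ coincide wherever both are defined, so $g\circ\gamma=\gamma$ and in particular $g(y)=\gamma(\ell)=y$. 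This is the argument in~\cite{GalazGarciaGuijarro2013}, and it subsumes Steps~2--5 of your plan in a single stroke. Your proposal is salvageable — Steps~3 and~4 can be tightened to match the scheme above — but as written Step~5 has a gap, and the entire induction can be discarded in favor of the one-line global comparison.
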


Let $Z$ be a connected locally compact metric space (in particular, any Alexandrov space).
The action of $\Iso(Z)$ on $Z$ is proper when $\Iso(Z)$ is equipped with the topology of pointwise convergence (see \cite[Proposition in Section 4, p.\ 11]{ManoussosStrantzalos2003}).
This topology agrees with the compact-open topology, which in turn is equivalent to the topology of uniform convergence over compact subsets (see, for example, \cite{Manoussos2010}).
Hence, the action is proper for the compact-open topology as well.
We record this fact for use in the proof of Theorem~\ref{T:MAIN_THM}, and note that one may also prove it for Alexandrov spaces by adapting the Riemannian proof (see, for example, \cite[Proposition~3.62]{AlexandrinoBettiol2015} and cf.\ \cite{DiazRamos2008a}).

\begin{prop}\label{P: Isometry group of Alex space acts properly}
Let $X$ be a connected Alexandrov space. Then the action of $\Iso(X)$ $($with the compact-open topology$)$ on $X$ is proper.
\end{prop}

\subsection{Foliated maps}
\label{s:foliated.maps}
To conclude this section, let us recall some results on foliated homeomorphisms (cf.\ \cite[Section~3]{MaysmenkoPolulyakh}). Let $p\colon X\to Y$ be a quotient map between two topological spaces $X$ and $Y$. Denote by~${\fol = \big\{p^{-1}(y)\mid y\in Y\big\}}$ the partition of $X$ induced by the pointwise preimages of $p$.
A continuous map $h\colon X\to X$ is \emph{foliated} if, for any $L\in \fol$, we have $h(L)\subset L'$ for some $L'\in \fol$.
Thus, every foliated map $h\colon X\to X$ induces a well-defined map $\Psi(h)\colon Y\to Y$
 given by $\Psi(h)(p(x)) = p(h(x))$
making the following diagram commute
\[
	\begin{tikzcd}
	X \arrow[d, "p", labels = left] \arrow[r,"h"]& X \arrow[d,"p"]\\
	Y \arrow[r,"\Psi(h)"] & Y.
	\end{tikzcd}
\]

We denote by $\Home(X,\fol)$ the group of all \emph{foliated homeomorphisms} (i.e., all foliated maps with an inverse map which is also foliated). We let $\Home(Y)$ be the group of all self-homeomorphisms of $Y$. We have a group homomorphism $\Psi\colon \Home(X,\fol)\to \Home(Y)$ given by $f\mapsto \Psi(f)$. The map $p\colon X\to Y$ \emph{admits local cross-sections} if, for any $y\in Y$, there exists an open neighborhood $V\subset Y$ and a continuous map $\sigma\colon V\to X$ such that $p\circ \sigma = \Id_{V}$.
When $Y$ is locally compact and Hausdorff, the following theorem guarantees the continuity of $\Psi$ when we equip both $\Home(X,\fol)$ and $\Home(Y)$ with the compact-open topology.
	
\begin{thm}[\protect{\cite[Corollary~3.4]{MaysmenkoPolulyakh}}]\label{T: Continuity of psi}
Let $X$ and $Y$ be topological spaces with $Y$ locally compact and Hausdorff. If $p\colon X \to Y$ is a quotient map, then each of the following conditions implies that the homomorphism $\Psi\colon \Home(X,\fol)\to \Home(Y)$ is continuous:
		\begin{enumerate}\itemsep=0pt
		 \item[$(1)$] The map $p$ is proper.
		 \item[$(2)$] The map $p$ is open and admits local cross-sections.
		 \item[$(3)$] The map $p$ is a locally trivial fibration.
		\end{enumerate}
	\end{thm}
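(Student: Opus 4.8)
The plan is to verify continuity of $\psi$ directly on a subbasis for the compact--open topology on $\Home(Y)$. Such a subbasis consists of the sets $N(K,U)=\{\varphi\in\Home(Y)\mid\varphi(K)\subset U\}$ with $K\subset Y$ compact and $U\subset Y$ open; correspondingly $\Home(X,\fol)$ carries the subbasic open sets $N(C,W)=\{g\in\Home(X,\fol)\mid g(C)\subset W\}$ with $C\subset X$ compact and $W\subset X$ open. So it is enough to show that $\psi^{-1}(N(K,U))$ is open for every such $K,U$. The identity driving everything is that $p\circ g=\psi(g)\circ p$ for every $g\in\Home(X,\fol)$; together with surjectivity of the quotient map $p$ this yields, for every $g$,
\[
\psi(g)(K)\subset U\iff g\bigl(p^{-1}(K)\bigr)\subset p^{-1}(U).
\]
Thus controlling $\psi(g)$ on $K$ is the same as controlling $g$ on the saturated set $p^{-1}(K)$, and the only obstruction is that $p^{-1}(K)$ need not be compact.

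Under hypothesis (1) the obstruction vanishes: $p^{-1}(K)$ is compact because $p$ is proper, and $p^{-1}(U)$ is open because $p$ is continuous, so for any $h\in\psi^{-1}(N(K,U))$ the set $N\bigl(p^{-1}(K),p^{-1}(U)\bigr)$ is a subbasic open neighbourhood of $h$ in $\Home(X,\fol)$, and by the displayed equivalence it is contained in $\psi^{-1}(N(K,U))$; hence the latter is open.

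Under hypothesis (2), fix $h\in\psi^{-1}(N(K,U))$, so $\psi(h)(K)\subset U$. For each $y\in K$ I would make three choices: an open $O_y\ni y$ with $\psi(h)(O_y)\subset U$ (continuity of the homeomorphism $\psi(h)$, using $\psi(h)(y)\in U$); an open $W_y\ni y$ carrying a continuous local section $\sigma_y\colon W_y\to X$ of $p$; and, since $Y$ is locally compact and Hausdorff, a compact neighbourhood $C_y\subset O_y\cap W_y$ of $y$. The interiors $\mathrm{int}(C_y)$ cover the compact set $K$, so finitely many do, with centres $y_1,\dots,y_m$; set $C_j=C_{y_j}$, $\sigma_j=\sigma_{y_j}$, $D_j=\sigma_j(C_j)\subset X$ (compact, as $\sigma_j$ is continuous), and $\mathcal{N}=\bigcap_{j=1}^m N\bigl(D_j,p^{-1}(U)\bigr)$. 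Then $h\in\mathcal{N}$: for $x=\sigma_j(y)\in D_j$ one has $p(h(x))=\psi(h)(p(x))=\psi(h)(y)\in\psi(h)(C_j)\subset\psi(h)(O_{y_j})\subset U$, so $h(x)\in p^{-1}(U)$. And $\mathcal{N}\subset\psi^{-1}(N(K,U))$: given $g\in\mathcal{N}$ and $y\in K$, pick $j$ with $y\in\mathrm{int}(C_j)$; then $\sigma_j(y)\in D_j$, so $g(\sigma_j(y))\in p^{-1}(U)$, hence $\psi(g)(y)=p\bigl(g(\sigma_j(y))\bigr)\in U$. Thus $\mathcal{N}$ is an open neighbourhood of $h$ inside $\psi^{-1}(N(K,U))$, which is therefore open. (Openness of $p$ plays no role in this argument; the existence of local sections does all the work.) Finally, hypothesis (3) reduces to (2): a locally trivial fibration is an open map, and a local trivialization $p^{-1}(V)\cong V\times F$ together with a choice of point of $F$ furnishes a continuous local section over $V$.

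The step I expect to be the crux is case (2). Without properness one cannot use $p^{-1}(K)$ as a test compactum in $X$, so one must fabricate finitely many compact sets $D_j\subset X$ that are simultaneously (a) large enough that their $p$-images cover a neighbourhood of $K$ and (b) small enough that $\psi(h)$ sends their $p$-images into $U$. Reconciling (a) and (b) is precisely what forces the triple choice of $O_y$, $W_y$, $C_y$ above; the remainder is the routine bookkeeping of the compact--open topology.
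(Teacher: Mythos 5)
The paper does not prove this statement: it is quoted verbatim from \cite[Corollary~3.4]{MaysmenkoPolulyakh} and used as a black box, so there is no in-paper proof to compare against. Judged on its own, your argument is correct and self-contained. Case (1) hinges on the equivalence $\psi(g)(K)\subset U\iff g(p^{-1}(K))\subset p^{-1}(U)$, which follows from $p\circ g=\psi(g)\circ p$ together with surjectivity of $p$, and then $N(p^{-1}(K),p^{-1}(U))$ is a legitimate subbasic open neighbourhood because properness makes $p^{-1}(K)$ compact. Case (2) is the genuine work: the three-fold choice of $O_y$ (from continuity of $\psi(h)$), $W_y$ (from the local section), and a compact neighbourhood $C_y\subset O_y\cap W_y$ (from local compactness and Hausdorffness of $Y$) gives the finitely many compacta $D_j=\sigma_j(C_j)$ needed to trap $\psi(g)$ on all of $K$, and both inclusions $h\in\mathcal N$ and $\mathcal N\subset\psi^{-1}(N(K,U))$ check out. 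Case (3) correctly reduces to (2). Your observation that openness of $p$ is not actually used in (2) — only the existence of continuous local sections — is accurate; your proof therefore establishes a slightly stronger statement than the one quoted, which is fine.
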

	
Suppose now that $X$ is a metric space, the fibers of the quotient map $p\colon X\to Y$ are closed and equidistant (i.e., $d_{L_i}$ is constant on $L_j$, for $i, j = 1, 2$), and the quotient space $Y=X/\fol$ is equipped with the quotient metric
defined by
\[
d_Y(p(L_1),p(L_2))=\inf\{d_X(x_1,x_2)\mid x_i\in L_i\}
\]
for fibers $L_1,L_2\in\fol$. Denote by $\Iso(X,\fol)$ the group of \emph{foliated isometries}, i.e., isometries of~$X$ sending leaves to leaves.
Then the map $\Psi(f)\in \Home(Y)$ is an isometry of $Y$. Indeed, let~${y_i=p(L_i)}$ with leaves $L_i\in \fol$, and fix $x_i\in L_i$, $i=1,2$.
Then
\begin{align*}
d_Y\bigl(\Psi(f)(y_1),\Psi(f)(y_2)\bigr)
 &= d_Y\bigl(p(f(x_1)),p(f(x_2))\bigr)= d_X\bigl(f(L_1),f(L_2)\bigr) \\
 &= d_X(L_1,L_2) = d_Y\bigl(p(L_1),p(L_2)\bigr)
 = d_Y(y_1,y_2).
\end{align*}
Thus, $\Psi(f)\in \Iso(Y)$.
Consequently, $\Psi\colon\Iso(X,\fol)\to \Iso(Y)$ is a group homomorphism.

We conclude this section with the following general facts, which we will use in the proofs of Theorems~\ref{T:MAIN_THM_SRF} and \ref{T:MAIN_THM}.

\begin{prop}
\label{P: Iso(Y) closed implies Iso(X,F) closed}
Let $p\colon X\to Y$ be a continuous surjective map between metric spaces
and set~${\fol = \big\{p^{-1}(y)\mid y\in Y\big\}}$. Then the group $\Iso (X,\fol)$ of foliated isometries is closed in $\Iso(X)$.
\end{prop}

\begin{proof}

Recall that $\Iso(X)$ is equipped with the compact-open topology.
For each ordered pair $(x,x')\in X\times X$ with $p(x)=p(x')$, define
\[
\Phi_{x,x'}\colon \ \Iso(X) \to Y\times Y,
\qquad
g \mapsto \bigl(p(gx),\,p(gx')\bigr).
\]
In the compact-open topology, for each point $x\in X$, the \emph{Myers--Steenrod} map (i.e., the evaluation map)
\[
\mu_x\colon \ \Iso(X)\to X, \qquad g\mapsto gx
\]
is continuous (cf.~\cite{GalazGarciaGuijarro2013,GuijarroSantos2018}). Therefore, the map
$
\Phi_{x,x'} = (p\circ \mu_x) \times (p\circ \mu_{x'})
$
is continuous.

Since $Y$ is a metric space (hence Hausdorff), the diagonal
$
\Delta_Y = \{(y,y) \in Y\times Y\}
$
is closed in~${Y\times Y}$.
An isometry $g\in \Iso(X)$ is foliated if and only if, for all $x,x' \in X$,
$p(x)=p(x')$ implies that $p(gx)=p(gx')$,
i.e., if and only if $\Phi_{x,x'}(g)\in \Delta_Y$ for every pair $(x,x')$ with~${p(x)=p(x')}$. Therefore,
\[
\Iso(X,\fol) = \bigcap\Phi_{x,x'}^{-1}(\Delta_Y),
\]
where the intersection is taken over the set of all pairs $(x,x')\in X\times X$ with $p(x)=p(x')$. Since~$\Delta_Y$ is closed and $\Phi_{x,x'}$ is continuous, each preimage $\Phi_{x,x'}^{-1}(\Delta_Y)$ is closed in $\Iso(X)$. Hence, $\Iso(X,\fol)$ is a closed subgroup of $\Iso(X)$.
\end{proof}

Since a closed subgroup of a Lie group is itself a Lie group, Proposition~\ref{P: Iso(Y) closed implies Iso(X,F) closed} yields the following corollary.

\begin{cor}\label{C: Lie group Iso(Y) and closed implies Lie group Iso(M,F)}
Let $p\colon X\to Y$ be a continuous surjective map between metric spaces
and set~${\fol = \{p^{-1}(y)\mid y\in Y\}}$. If $\Iso(X)$ admits a Lie group structure with respect to the compact-open topology, then $\Iso (X,\fol)$ is a Lie group.
\end{cor}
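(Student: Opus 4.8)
The plan is to exhibit $\Iso(X,\fol)$ as a closed subgroup of the Lie group $\Iso(X)$ and then appeal to the fact that a closed subgroup of a Lie group inherits a Lie group structure.

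First I would check that $\Iso(X,\fol)$ is a subgroup of $\Iso(X)$, not merely a subset. Each foliated isometry $f$ permutes the leaves of $\fol$ and therefore induces a map $f^\ast\in\Iso(Y)$ with $p\circ f = f^\ast\circ p$; hence for a leaf $L = p^{-1}(y)$ one has $f(L) = p^{-1}(f^\ast(y))$ and $f^{-1}(L) = p^{-1}((f^\ast)^{-1}(y))$, and for two foliated isometries $f,g$ we get $(f\circ g)(L) = p^{-1}((f^\ast g^\ast)(y))$, again a leaf. Thus $f\circ g$ and $f^{-1}$ are foliated isometries, so $\Iso(X,\fol) < \Iso(X)$.

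Next, since $X$ is a length space, $Y$ is a metric space, and $p$ is a proper submetry, Proposition~\ref{P: Iso(Y) closed implies Iso(X,F) closed} applies: the hypothesis that $\Iso(Y)$ is closed gives that $\Iso(X,\fol)$ is a closed subset of $\Iso(X)$ in the compact-open topology. Combined with the previous step, $\Iso(X,\fol)$ is a closed subgroup of $\Iso(X)$, and its topology as a subspace of $\Iso(X)$ is precisely the compact-open topology.

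Finally, by hypothesis $\Iso(X)$ carries a Lie group structure compatible with the compact-open topology. Since a closed subgroup of a Lie group is itself an embedded Lie subgroup, and hence a Lie group, we conclude that $\Iso(X,\fol)$ is a Lie group. There is no genuine obstacle here: the assertion is a formal consequence of Proposition~\ref{P: Iso(Y) closed implies Iso(X,F) closed} together with the closed subgroup theorem, the only point to keep track of being the compatibility of the topologies, which holds because all the groups in sight are equipped with the compact-open topology.
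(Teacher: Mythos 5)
Your proposal is correct and matches the paper's argument exactly: the paper derives this corollary from Proposition~\ref{P: Iso(Y) closed implies Iso(X,F) closed} together with the closed subgroup theorem. Your additional verification that $\Iso(X,\fol)$ is in fact a subgroup of $\Iso(X)$ is a reasonable bit of bookkeeping that the paper leaves implicit.
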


\section[Proof of Theorem A]{Proof of Theorem~\ref{T:MAIN_THM_SRF}}\label{S: proof of Main corollary}
We will prove each item in Theorem~\ref{T:MAIN_THM_SRF} separately.

\subsection[Proof of Theorem A (i)]{Proof of Theorem~\ref{T:MAIN_THM_SRF}\,\ref{C:MAIN_THM_i}}
By the classical Myers--Steenrod theorem, $\Iso(M)$
is a Lie group and is compact if $M$ is compact. The conclusion now follows from Corollary~\ref{C: Lie group Iso(Y) and closed implies Lie group Iso(M,F)}.

\subsection[Proof of Theorem A (ii)]{Proof of Theorem~\ref{T:MAIN_THM_SRF}\,\ref{C:MAIN_THM_ii}}

Recall that $M_\prin\subset M$, the set of points in principal leaves, is an open and dense subset of~$M$ (see Section~\ref{S: Singular Riemannian foliations}).
Hence, $M_\prin$ is a Riemannian manifold foliated by the principal leaves of~$\fol$, and the quotient $M_\prin/\fol$ is also a Riemannian manifold.
Moreover, the leaf-projection map~${\pi\colon M_\prin\to M_\prin/\fol}$ is surjective and continuous.
By the classical Myers--Steenrod theorem, $\Iso(M_\prin)$ is a Lie group with respect to the compact-open topology. Hence, by Corollary~\ref{C: Lie group Iso(Y) and closed implies Lie group Iso(M,F)}, $\Iso(M_\prin,\fol)$ is a Lie group.

Consider the group homomorphism
\[
\Rho\colon \ \Iso(M,\fol)\to \Iso(M_{\prin},\fol)
\]
given by $\Rho(f) = f|_{M_{\prin}}$.
Note that $\Rho(f)\in \Iso(M_\prin,\fol)$, since foliated isometries send principal leaves to principal leaves.
Furthermore, $\Rho$ is continuous and, therefore, smooth by \cite[Corollary~3.50]{Hall2015}. Hence, $\Rho$ is a Lie group homomorphism.

Since $M_\prin/\fol$ is a Riemannian manifold, $\Iso(M_\prin/\fol)$ is a Lie group, by the classical Myers--Steenrod theorem.
Consider the group homomorphism
\[
\Psi_\prin\colon \ \Iso (M_\prin,\fol)\to\Iso (M_\prin/\fol),
\]
where $\Psi_\prin(f)$ is given by
$\Psi_\prin(f)(\pi(x)) = \pi(f(x))$ (cf.\ Section~\ref{s:foliated.maps}). The leaf-projection map~${\pi\colon M_{\prin}\to M_{\prin}/\fol}$ is a fiber bundle with $(n-k)$-dimensional fibers (see \cite[Theorem~A]{MendesRadeschi2019} and \cite[p.~41]{Corro}).
By Theorem~\ref{T: Continuity of psi}, $\Psi_\prin$ is continuous and, therefore, smooth. Hence $\Psi_\prin$ is a~Lie group homomorphism.

Consider now the Lie group homomorphism
\[
\Phi = \Psi_\prin\circ \Rho \colon \ \Iso(M,\fol) \to \Iso(M_\prin/\fol).
\]
By the first isomorphism theorem for Lie groups,
\begin{equation}
\label{eq:rank.nullity.Phi}
\dim(\Iso(M,\fol)) = \dim(\ker(\Phi)) + \dim(\Phi(\Iso(M,\fol)).
\end{equation}

Since $\Phi(\Iso(M,\fol))$ is a Lie subgroup of $\Iso(M_\prin/\fol)$ and $M_\prin/\fol$ is a $k$-dimensional Riemannian manifold, by \cite[Chapter~II, Theorem~3.1]{Kobayashi},
\begin{equation}
\label{eq:bound.dim.im.phi}
\dim(\Phi(\Iso(M,\fol)))\leq \dim(\Iso(M_\prin/\fol)) \leq \frac{k(k+1)}{2}.
\end{equation}

Let $G = \ker(\Phi)\leq \Iso(M,\fol)$ and observe that $g\in G$ if and only if $g(L) = L$ for any principal leaf $L$.
Fix $p\in M_\prin$ and write $T_pM = V_p\oplus H_p$, where $V_p=T_pL$ is the vertical space and $H_p = V_p^\perp$ is the horizontal space.
For any $g\in G$, we have $\pi\circ g = \pi$, which implies that~${{\rm d}\pi_p\circ {\rm d}g_p = {\rm d}\pi_p}$.
Since $\pi\colon M_\prin\to M_\prin/\fol$ is a Riemannian submersion, ${\rm d}\pi_p|_{H_p}$ is a linear isomorphism onto $T_{\pi(p)}(M_\prin/\fol$), and therefore ${\rm d}g_p|_{H_p}=\mathrm{Id}$. Hence, $G_p$, the isotropy group at~$p$, acts trivially on $H_p$ and orthogonally on $V_p$.
Therefore,
\begin{equation}
\label{eq:bound.dim.G_p}
\dim(G_p)\leq \dim(\mathrm{O}(V_p)) \leq \frac{(n-k)(n-k-1)}{2}.
\end{equation}
Since the orbit $G(p)$ of $G$ through $p$ is contained in the principal leaf containing $p$,
\begin{equation}
\label{eq:bound.dim.G(p)}
\dim(G(p))\leq n-k.
\end{equation}
Since $\dim(G) = \dim(G(p)) + \dim(G_p)$, inequalities \eqref{eq:bound.dim.G_p} and \eqref{eq:bound.dim.G(p)} imply that
 \begin{align}
 \label{eq:bound.dim.ker.phi}
 \dim(\ker (\Phi)) = \dim(G)
 \leq n-k +\frac{(n-k)(n-k-1)}{2} = \frac{(n-k)(n-k+1)}{2}.
 \end{align}

Combining equation~\eqref{eq:rank.nullity.Phi} with inequalities~\eqref{eq:bound.dim.im.phi} and \eqref{eq:bound.dim.ker.phi}, we obtain
\begin{equation}
\label{eq:bound.dim.Iso(M,F)}
\dim(\Iso(M,\fol)) \leq \frac{k(k+1)}{2} + \frac{(n-k)(n-k+1)}{2},
\end{equation}
thus verifying inequality \eqref{EQ:DIM_BOUND}.

\subsection[Proof of Theorem A (iii)]{Proof of Theorem~\ref{T:MAIN_THM_SRF}\,\ref{C:MAIN_THM_iii}}

Assume equality holds in inequality \eqref{EQ:DIM_BOUND}, i.e.,
\[
\dim(\Iso(M,\fol)) = \frac{k(k+1)}{2} + \frac{(n-k)(n-k+1)}{2}.
\]
Then, by equation~\eqref{eq:rank.nullity.Phi} and inequalities~\eqref{eq:bound.dim.im.phi} and \eqref{eq:bound.dim.ker.phi},
we must have
\begin{equation}
\label{eq:equality.dim.im.Phi}
\dim(\Phi(\Iso(M,\fol))) = \frac{k(k+1)}{2},
\end{equation}
and
\begin{equation}
\label{eq:equality.dim.ker.Phi}
\dim(\ker (\Phi))=\frac{(n-k)(n-k+1)}{2}.
\end{equation}

Inequality~\eqref{eq:bound.dim.im.phi} and equation~\eqref{eq:equality.dim.im.Phi} imply that
$
\dim(\Iso(M_\prin/\fol)) = k(k+1)/2$.
Since $M_\prin/\fol$ is a $k$-dimensional Riemannian manifold, by \cite[Chapter~II, Theorem 3.1]{Kobayashi}, $M_\prin/\fol$ is isometric to one of the following $k$-dimensional space forms: Euclidean space, hyperbolic space, a round sphere, or round real projective space. In particular, $M_\prin/\fol$ is complete. Since $M/\fol$ is the metric completion of $M_\prin/\fol$, it follows that $M/\fol = M_\prin/\fol$. Therefore, the foliation is regular and all the leaves are principal.

Since $\ker(\Phi)$ acts effectively by isometries on any leaf and leaves are $(n-k)$-dimensional, equation~\eqref{eq:equality.dim.ker.Phi} implies that the isometry group of any leaf has maximal possible dimension. Hence, by \cite[Chapter~II, Theorem~3.1]{Kobayashi}, that all the leaves are isometric to a round $\Sp^{n-k}$, a~round~$\RP^{n-k}$, Euclidean space $\R^{n-k}$, or a hyperbolic space $\mathbb{H}^{n-k}$.

Recall that the leaf-projection map $\pi\colon M_{\prin}\to M_{\prin}/\fol$ is a smooth Riemannian submersion.
Since all the leaves are principal, $\pi\colon M\to M/\fol$ is a fiber bundle, and thus the desired conclusion follows.

\section[Proof of Theorem B]{Proof of Theorem~\ref{T:MAIN_THM}}\label{S:PROOF_MAIN_THM}
	
We will prove each item in Theorem~\ref{T:MAIN_THM} separately. Throughout this section, we let $\pi\colon X\to Y$ be a~submetry between Alexandrov spaces and set $\fol = \big\{\p^{-1}(y)\mid y\in Y\big\}$. In the proofs of items~\ref{T:MAIN_THM_ii} and \ref{T:MAIN_THM_iii} we assume $\pi$ is proper.

\subsection[Proof of Theorem B (i)]{Proof of Theorem~\ref{T:MAIN_THM}\,\ref{T:MAIN_THM_i}}
By Theorem~\ref{T: Dimension of Isometry group}\,\ref{T: Dimension of Isometry group i}, $\Iso(X)$ is a Lie group and is compact if $X$ is compact. The conclusion now follows from Corollary~\ref{C: Lie group Iso(Y) and closed implies Lie group Iso(M,F)}.

\subsection[Proof of Theorem B (ii)]{Proof of Theorem~\ref{T:MAIN_THM}\,\ref{T:MAIN_THM_ii}}
We now compute the upper bound on the dimension of the Lie group $\Iso(X,\fol)$. Recall that from now on and in the rest of the section we assume that the submetry $\pi\colon X\to Y$ is proper.
Suppose that $X$ has dimension $n\geq 1$ and $Y$ has dimension $0\leq k \leq n$, so that the regular leaves of the foliation $\fol$ have dimension $n-k$. We will show that
\[
 	\dim(\Iso(X,\fol)) \leq \frac{k(k+1)}{2}+\frac{(n-k)(n-k+1)}{2}.
\]

Recall that a point $y\in Y$ is \emph{regular with respect to $\pi$} if $\pi$ is regular at each $x\in \pi^{-1}(y)$ (see~\cite[Definition 6.5]{Lytchak}).
The set of regular points
\[
Y_{\mathrm{reg}}=\{y\in Y\mid y \mbox{ is regular with respect to }\pi\}
\]
contains the open and dense subset of thick points (see \cite[end of Section~6]{Lytchak}).
Consequently, the set
$
X_{\mathrm{reg}}= \{x\in X\mid x \mbox{ is regular}\}
$
contains an open and dense subset.
Let
\[
R_X = \big\{x\in X\mid \Sigma_x X \mbox{ is isometric to the unit round } \Sp^{n-1}\big\}.
\]
By Theorem \ref{T:geometrically.regular.points.are.dense}, $R_X$ is dense. Hence, $R_X\cap X_{\mathrm{reg}}\neq \varnothing$.

Fix $\bar{x}\in R_X\cap X_{\mathrm{reg}}$.
We may choose $\bar{x}$ so that \smash{$\Sigma_{\pi(\bar{x})} Y$} is isometric to the unit round $\Sp^{k-1}$, since the set of points in $Y$ whose space of directions is isometric to the unit round $\Sp^{k-1}$ is dense in $Y$ and $Y_{\mathrm{reg}}$ contains an open subset.
Since $\bar{x}$ is regular,
\[
{\rm d}_{\bar{x}} \pi|_{H_{\bar{x}}}\colon\ H_{\bar{x}}\to \Sigma_{\pi(\bar{x})} Y
\]
is an isometry (see Section~\ref{SS:submetries.between.alexandrov.spaces}). Moreover, since $\Sigma_{\pi(\bar{x})} Y$ is round, there exists an isometry between $\Sigma_{\bar{x}} X$ and the spherical join $H_{\bar{x}}* V_{\bar{x}}$. Since $\Sigma_{\bar{x}} X$ is isometric to the unit round $\Sp^{n-1}$ and~$H_{\bar{x}}$ is isometric to the unit round $\Sp^{k-1}$, we conclude that $V_{\bar{x}}$ is isometric to the unit round~$S^{n-k-1}$.

Since $Y$ is an Alexandrov space, $\Iso(Y)$ is a Lie group, by Theorem~\ref{T: Dimension of Isometry group}. Consider the group homomorphism
$
\Psi\colon \Iso (X,\fol) \to \Iso (Y)$,
where $\Psi(f)$ is given by
$\Psi(f)(\pi(x)) = \pi(f(x))$ (cf.\ Section~\ref{s:foliated.maps}). We assume $\pi\colon X\to Y$ is proper. Hence, Theorem~\ref{T: Continuity of psi} implies that $\Psi$ is continuous, and thus it is a Lie group homomorphism.

Set $G = \ker(\Psi)$. Given $f\in G$, we have $f(\bar{x})\in \pi^{-1}(\pi(\bar{x}))$, allowing us to identify the differential ${\rm d}_{\bar{x}} f \colon \Sigma_{\bar{x}} X\to \Sigma_{f(\bar{x})} X$ (which is an isometry) with an isometry
\[
({\rm d}_{\bar{x}}f)_1 * ({\rm d}_{\bar{x}}f)_2\colon\ H_{\bar{x}}* V_{\bar{x}}\to H_{f(\bar{x})}* V_{f(\bar{x})}.
\]
Since $G$ acts on $\pi^{-1}(\pi(\bar{x}))$, we may consider the isotropy group $G_{\bar{x}}$.
For $f\in G_{\bar{x}}$, we have then that $({\rm d}_{\bar{x}}f)_1=\mathrm{Id}_{H_{\bar{x}}}$ and $({\rm d}_{\bar{x}}f)_2\colon V_{\bar{x}}\to V_{\bar{x}}$ is an isometry.
Thus, $G_{\bar{x}}$ acts by isometries on the unit round $S^{n-k-1}$.
Moreover, this action is effective.
Indeed, if $({\rm d}_{\bar{x}}f)_2 =\mathrm{Id}_{V_{\bar{x}}}$, then ${\rm d}_{\bar{x}}f = \mathrm{Id}_{\Sigma_{\bar{x}} X}$.
Since $f(\bar{x}) = \bar{x}$ and $f$ is an isometry of the Alexandrov space $X$, then \cite[Lemma~3.2]{GalazGarciaGuijarro2013} implies that $f=\mathrm{Id}_X$.
Hence, by the Myers--Steenrod theorem,
\[
 \dim (G_{\bar{x}})\leq \frac{(n-k-1)(n-k)}{2}.
\]
Now we consider the orbit $G(\bar{x})\subset \pi^{-1}(\pi(\bar{x}))$.
By \cite[Korollar 7.5]{Lytchak}, the Hausdorff dimension of~$\pi^{-1}(\pi(\bar{x}))$ is $n-k$.
Thus, we conclude that the dimension of $G(\bar{x})= G/G_{\bar{x}}$ is bounded above by $n-k$. It follows that
\begin{align}
\dim(G)= \dim(G(\bar{x}))+\dim(G_{\bar{x}}) \leq n-k+\frac{(n\hspace{-0.5pt}-\hspace{-0.5pt}k\hspace{-0.5pt}-\hspace{-0.5pt}1)(n\hspace{-0.5pt}-\hspace{-0.5pt}k)}{2}
= \frac{(n\hspace{-0.5pt}-\hspace{-0.5pt}k)(n\hspace{-0.5pt}-\hspace{-0.5pt}k\hspace{-0.5pt}+\hspace{-0.5pt}1)}{2}.\!\!\!\!\label{eq:bound.alex.1}
\end{align}

Next, observe that $\Psi(\Iso(X,\fol))$ is a Lie subgroup of $\Iso(Y)$. Since $Y$ is an Alexandrov space of dimension $k$, Theorem~\ref{T: Dimension of Isometry group} implies that
\begin{align}\label{eq:bound.alex.2}
\dim(\Psi(\Iso(X,\fol))) \leq k(k+1)/2.
\end{align}
Then, by the first isomorphism theorem for Lie groups, inequalities~\eqref{eq:bound.alex.1} and \eqref{eq:bound.alex.2}, and recalling that $G=\ker(\Psi)$,
\begin{align}
\dim ( \Iso (X,\fol) )  = \dim (G) + \dim (\Psi(\Iso(X,\fol)) ) \leq \frac{(n\hspace{-0.5pt}-\hspace{-0.5pt}k)(n\hspace{-0.5pt}-\hspace{-0.5pt}k\hspace{-0.5pt}+\hspace{-0.5pt}1)}{2} +\frac{k(k\hspace{-0.5pt}+\hspace{-0.5pt}1)}{2}.\!\!\!\label{eq:ineq.bound.dim.iso.Alex.proof}
\end{align}

\begin{rem}
The conclusions of Theorem~\ref{T:MAIN_THM_SRF}\,\ref{C:MAIN_THM_ii} do not follow directly from the conclusions of Theorem~\ref{T:MAIN_THM}\,\ref{T:MAIN_THM_ii}, since the leaf space of a closed singular Riemannian foliation $\fol$ on a complete manifold $M$ is only locally an Alexandrov space of bounded curvature (see \cite[p.~119]{LytchakThorbergsson2010}). This is because the local lower curvature bound on $M/\fol$ is given by a local lower sectional curvature bound on $M$. When $M$ is not compact, it may happen that $M$ does not have a global lower sectional curvature bound, and thus in this case $M/\fol$ does not have a lower curvature bound in the sense of Alexandrov.
\end{rem}

\begin{rem}
Due to the lack of a slice theorem for submetries between Alexandrov spaces, we need properness of $\pi\colon X\to Y$ to guarantee that $\Psi\colon \Iso(X,\fol)\to \Iso(Y)$ is continuous, and thus a Lie group homeomorphism, to be able to say that $\dim(\Iso(X,\fol))=\dim(\ker\Psi)+\dim(\img(\Psi))$.
\end{rem}

\subsection[Proof of Theorem B]{Proof of Theorem~\ref{T:MAIN_THM}\,\ref{T:MAIN_THM_iii}}Recall our standing assumption that $\pi\colon X\to Y$ is a proper submetry between Alexandrov spaces.
Assume that equality holds in~\eqref{eq:ineq.bound.dim.iso.Alex.proof}, i.e.,
\[
\dim(\Iso(X,\fol)) = \frac{k(k+1)}{2}+\frac{(n-k)(n-k+1)}{2},
\]	
where $\dim(X) = n$ and $\dim(Y) = k$ with $0\leq k\leq n$. We first determine the topology of the base space $Y$.

\begin{lem}\label{L: equality implies base is a manifold isometric to model spaces}
The base space $Y$ is isometric to $k$-dimensional Euclidean space, hyperbolic space with constant negative sectional curvature, a round real projective space, or a round sphere.
\end{lem}

\begin{proof}
Since $Y$ is $k$-dimensional, Theorem~\ref{T: Dimension of Isometry group} implies that $\dim(\Iso(Y))\leq k(k+1)/2$.
Since $\dim(\Iso(X,\fol))$ is maximal, it follows from the proof of Theorem~\ref{T:MAIN_THM}\,\ref{T:MAIN_THM_ii} that the subgroup~${\Psi(\Iso(X,\fol))\subset \Iso(Y)}$ has dimension $k(k+1)/2$. Thus, $\Iso(Y)$ has dimension $k(k+1)/2$ and the conclusion follows from Theorem~\ref{T: Dimension of Isometry group}.
\end{proof}

We now determine the topology of the fibers. Consider the Lie group homomorphism
$
\Psi\colon  \Iso(X,\fol)\to \Iso(Y)
$
with $\Psi(f)$ given by $\Psi(f)(\pi(x)) = \pi(f(x))$ and set $G=\ker(\Psi)\subset \Iso(X,\fol)$.

\begin{lem}\label{L: fibers contain homogeneous subsets}
For any $x\in X$, the orbit $G(x)\subset \pi^{-1}(\pi(x))$ is a compact subset.
\end{lem}

\begin{proof}Fix $x\in X$ and let $d_I$ be the intrinsic metric on $L_{x}=\pi^{-1}(\pi(x))$ induced by $d|_{L_{x}\times L_{x}}$.
We first prove that $G\subset\Iso(L_{x},d_I)$.
Let $\gamma\colon [0,1]\to L_x$ be a rectifiable curve.
Since $G\subset \Iso(X)$, for any $g\in G$,
$L(g\circ \gamma)=L(\gamma)<\infty$,
where $L(\cdot)$ denotes length.
By definition,
\[
d_I(x,y)=\inf\{L(\gamma)\mid \gamma\colon[0,1]\to L_x \text{ is rectifiable with } \gamma(0)=x,\, \gamma(1)=y\}.
\]
Thus, $d_I(g(x),g(y))\leq d_I(x,y)$. Since $g$ is arbitrary,
\[
d_I(x,y)=d_I\bigl(g^{-1}(g(x)),g^{-1}(g(y))\bigr)\leq d_I(g(x),g(y)).
\]
Thus, $G\subset \Iso(L_{x},d_I)$.

By construction, $G(x)\subset L_{x}$.
We now show that $G(x)$ is closed.
Consider $\{g_i\}_{i\in \N}\subset G$ with~${\lim_{i\to\infty}g_i(x)=y\in X}$.
By Proposition~\ref{P: Isometry group of Alex space acts properly}, the action of $\Iso(X)$ on $X$ is proper. Then, there exists a subsequence $\{g_{i_k}\}_{k\in \N}$ converging to some $g\in \Iso(X)$ (see \cite[Proposition~21.5]{Lee}, whose proof is purely topological, and cf.\ \cite[Proposition~2.2]{CorroKordass2019} or \cite{DiazRamos2008a}).
Recall that the map $\Psi$ is continuous. Hence, $G=\ker(\Psi)$ is closed in $\Iso(X,\fol)$, and $\Iso(X,\fol)$ is closed in $\Iso(X)$.
Therefore, $g\in G$.
By the continuity of the action of $G$ on $X$, $y=g(x)\in G(x)$.
Thus, the $G$-orbits are closed in $X$. Since $\pi\colon X\to Y$ is proper, $L_x$ is compact, and since $G(x)\subset L_x$ is closed, $G(x)$ is also compact.
\end{proof}

\subsection*{Connected fiber case} We first consider the case where $\pi$ has connected fibers.

\begin{lem}\label{L: fibers are homogeneous Riemannian manifolds}
If all fibers of $\pi\colon X\to Y$ are connected, then $($with the intrinsic metric$)$ each fiber is isometric to a round sphere or a round real projective space.
\end{lem}

\begin{proof}

As stated in the proof of Theorem~\ref{T:MAIN_THM}\,\ref{T:MAIN_THM_ii}, we may fix $\bar{x}\in X$ such that $\Sigma_{\bar{x}}(X)$ is isometric to a unit round $S^{n-1}$ and $\Sigma_{\pi(\bar{x})}(Y)$ is isometric to a unit round $S^{k-1}$.
From our hypotheses and the proof of Theorem~\ref{T:MAIN_THM}\,\ref{T:MAIN_THM_ii}, we observe that
$
\dim_H(G(\bar{x})) = n-k = \dim_H(L_{\bar{x}})$,
where $L_{\bar{x}} = \pi^{-1}(\pi(\bar{x}))$ is the fiber containing $\bar{x}$ and $\dim_H(\cdot)$ denotes Hausdorff dimension.

Endow the orbit space $X/G$ with the orbital distance $d^\ast$ and consider the submetry
$
\pi_G\colon (X,\allowbreak d_X)\to (X/G,d^\ast)$.
Since the orbits of $G$ are contained in the fibers of $\pi$ \big(i.e., $G(x)\subset \pi^{-1}(\pi(x))$ for any $x\in X$\big), we obtain a well-defined map
$\pi_D\colon (X/G,d^\ast)\to (Y,d_Y)$
with $\pi = \pi_D\circ \pi_G$. Since~$\pi$ and $\pi_G$ are submetries, $\pi_D$ is also a submetry.
Hence,
$\dim_H(X/G) = n-(n-k) = k$.
This implies that the submetry $\pi_D$ has discrete fibers, and thus $\pi_D^{-1}(\pi(x))= \bigsqcup_{i\in \N} \{x^\ast_i\}$. Since~${\pi = \pi_D\circ \pi_G}$, it follows that
$
L_{\bar{x}} = \bigsqcup_{i\in \N} G(x_{i})$.
By \cite[Theorem 7.2]{Lytchak}, the connected components of each fiber of $\pi$ have positive distance to one another.
By hypothesis, $L_{\bar{x}}$ is connected. Hence, for $G^0\subset G$, the connected component of the identity, we have that $G(\bar{x})=G^0(\bar{x})$, $\pi=\pi_G$, and
$
L_{\bar{x}}=G^0(\bar{x})$.
Thus, $L_{\bar{x}}$ with the intrinsic metric is a compact homogeneous inner metric space.

{\samepage
Since $\Iso(X)$ is a Lie group, $G^0$ is a Lie subgroup. Hence, $L_{\bar{x}}=G^0(\bar{x}) = G^0/\bigl(G^0\bigr)_{\bar{x}}$ is a~manifold and, in particular, locally compact and locally contractible. We also have
$\dim_{\mathrm{top}}L_{\bar{x}} = \dim_HL_{\bar{x}} = n-k$.
By \cite{BerestovskiiVershik1992} (cf.\ \cite[Theorem 13]{Berestovskii2014}), $(L_{\bar{x}},d_I)$ is a homogeneous Finsler manifold. Moreover, at $\bar{x}$, the space of directions $\Sigma_{\bar{x}}L_{\bar{x}}$ is isometric to the unit round $\Sp^{n-k-1}$ and the metric tangent cone is isometric to the $(n-k)$-dimensional Euclidean space. Since $G^0$ acts transitively and by isometries on $L_{\bar{x}}$, this holds at every point in $L_{\bar{x}}$. Therefore, the Finsler norm is Euclidean, and it follows that $(L_{\bar{x}},d_I)$ is a homogeneous Riemannian manifold (see also~\cite[Theorem~7]{Berestovskii1989}).

}

Therefore, $L_{\bar{x}}$ is a union of Riemannian manifolds and, hence, a compact Riemannian manifold with finitely many connected components.
By hypothesis, the isometry group of $L_{\bar{x}}$ has maximal dimension $(n-k)(n-k+1)/2$.
Hence, by \cite[Chapter~II, Theorem 3.1]{Kobayashi}, $L_{\bar{x}}$ is isometric to a~round~$\Sp^{n-k}$ or a round $\RP^{n-k}$.

By Lemma~\ref{L: equality implies base is a manifold isometric to model spaces}, the space $Y$ is a Riemannian manifold.
Thus, every point $y\in Y$ is a regular point of $\pi=\pi_G$.
From the definition of regular point, we conclude that given $x^\ast\in X/G$, the set of horizontal directions $H_{x^\ast}(\pi_D)$ of $\pi_D$ is isometric to $\Sigma_{\pi(x^\ast)}Y = \Sp^{k-1}$.
Moreover, $H_{x^\ast}(\pi_D) = \Sigma_{x^\ast}(X/G)$. This implies that all fibers of $\pi=\pi_G$ are regular.
By Corollary~\ref{C: regular fibers are homeomorphic}, each fiber of $\pi$ is homeomorphic to $L_{\bar{x}}=G(\bar{x})$.
That is, all the fibers are homogeneous spaces isometric to round spheres or round real projective spaces.
\end{proof}

\begin{lem}\label{L: Isom(X,F) acts transitively}
If the fibers of $\pi\colon X \to Y$ are connected, then $\Iso(X,\fol)$ acts transitively on $X$.
\end{lem}

\begin{proof}
By Theorem~\ref{T:MAIN_THM}\,\ref{T:MAIN_THM_ii} and equality in
\eqref{eq:ineq.bound.dim.iso.Alex.proof}, the image $\Psi(\Iso(X,\fol))\subset \Iso(Y)$ has the same dimension as $\Iso(Y)$.
Hence, $\Psi(\Iso(X,\fol))$ contains $\Iso(Y)^0$, the identity component of $\Iso(Y)$.
By Lemma~\ref{L: equality implies base is a manifold isometric to model spaces}, $Y$ is isometric to $\mathbb{R}^k$, $\mathbb{H}^k$, a round $S^k$ or a round $\RP^k$.
Thus, $\Iso(Y)^0$ is isomorphic to $\R^k\rtimes \mathrm{SO}(k)$ (Euclidean space), $\mathrm{SO}(k,1)$ (hyperbolic space), or ${\mathrm{SO}(k+1)}$ (sphere and real projective space).
In each case, $\Iso(Y)^0$ acts transitively on $Y$.
Therefore, $\Psi(\Iso(X,\fol))$ acts transitively on $Y$.

Let $x,\bar{x}\in X$ and choose $h\in \Iso(X,\fol)$ with
$\Psi(h)(\pi(x))=\pi(\bar{x})$.
Then, by the definition of $\Psi$, we have $h(x)\in\pi^{-1}(\pi(\bar{x}))$.
Since $G\subset \Iso(X,\fol)$ acts transitively on $\pi^{-1}(\pi(\bar{x}))$ by the proof of Lemma~\ref{L: fibers are homogeneous Riemannian manifolds}, there exists $g\in G=\ker(\Psi)$ with $g(h(x))=\bar{x}$.
Thus, we conclude that~$\Iso(X,\fol)$ acts transitively on $X$.
\end{proof}

\begin{lem}
\label{L:connected.case.submetry.is.a.riemannian.submersion}
If all fibers of $\pi\colon X\to Y$ are connected, then $X$ and $Y$ are Riemannian manifolds, and $\pi\colon X\to Y$ is a smooth Riemannian submersion.
\end{lem}

\begin{proof}
By Lemma~\ref{L: Isom(X,F) acts transitively}, $X$ is a homogeneous Alexandrov space. Hence, by \cite[Theorem 7]{Berestovskii1989}, $X$ is isometric to a homogeneous Riemannian manifold $(M,g)$.
By Lemma~\ref{L: fibers are homogeneous Riemannian manifolds}, each fiber is an orbit $G^0(x)$ of $G=\ker(\Psi)\subset \Iso(X,\fol)$.
Since $X$ is now Riemannian and $G\subset \Iso(X)$ acts by isometries, each fiber is a smooth embedded submanifold of $X$.
Finally, by \cite[Lemma 13.1]{LytchakWilking2024} implies that $\pi\colon X\to Y$ is a smooth map.
\end{proof}

\subsection{Disconnected fiber case} Suppose now that the fibers of $\pi\colon X\to Y$ have one or more connected components.
Recall our standing assumption that the submetry $\pi\colon X\to Y$ is proper and equality in~\eqref{eq:ineq.bound.dim.iso.Alex.proof} holds. Recall that by Lemma~\ref{L: equality implies base is a manifold isometric to model spaces}, $Y$ is a $k$-dimensional Riemannian manifold isometric to Euclidean space, hyperbolic space, a round sphere, or a round real projective space. We distinguish two cases, depending on whether $Y$ is compact.

\begin{lem}
\label{L:disconnected.fibers.Y.is.Rk}
If the fibers of $\pi\colon X\to Y$ are possibly disconnected and $Y$ is isometric to Euclidean or hyperbolic space, then $X$ is homeomorphic to $Y\times F$ with $F$ either a round sphere or a round real projective space.
\end{lem}

\begin{proof}

By the factorization of submetries between Alexandrov spaces \cite[Theorem 10.1]{Lytchak}, there exist submetries
$\tilde{\pi}\colon X\to Z$ with connected fibers and $\pi_D\colon Z\to Y$ with discrete fibers such that~${\pi = \pi_D\circ\tilde{\pi}}$.

Set
$\tilde{\fol} =\big\{\tilde{\pi}^{-1}(\tilde{\pi}(x))\mid x\in X\big\}$.
Let us verify that $\Iso(X,\fol)\subset \Iso(X,\tilde{\fol})$.
Fix $y\in Y$ and let $f\in \Iso(X,\fol)$.
Since $f$ preserves $\pi^{-1}(y)$ and
\[
\pi^{-1}(y) = \bigsqcup_{z\in \pi^{-1}_D(y)}\tilde{\pi}^{-1}(z)
\]
with each $\tilde{\pi}^{-1}(z)$ a connected component of $\pi^{-1}(y)$, $f$ must map $\tilde{\pi}^{-1}(z)$ to some $\tilde{\pi}^{-1}(z')$.
Hence, $f\in\Iso(X,\tilde{\fol})$.

Since
$\dim(Z) = \dim(Y) = k$,
we have
\begin{align*}
\frac{k(k+1)}{2}+\frac{(n-k)(n-k+1)}{2}=\dim(\Iso(X,\fol))&\leq \dim\bigl(\Iso(X,\tilde{\fol})\bigr)\\
&\leq \frac{k(k+1)}{2}+\frac{(n-k)(n-k+1)}{2},
\end{align*}
where the upper bound follows from Theorem~\ref{T:MAIN_THM}\,\ref{T:MAIN_THM_ii}.
Thus, $\Iso(X,\tilde{\fol})$ has maximal dimension.

By Lemma~\ref{L: fibers are homogeneous Riemannian manifolds}, each fiber of $\tilde{\fol}$ (with the intrinsic metric) is isometric to a round sphere or a round real projective space. Therefore, each fiber of $\pi$ is a disjoint union of round spheres or real projective spaces.
Since $\pi$ is proper, each fiber $\pi^{-1}(y)$ is compact, hence consists of finitely many connected components.

By Lemma~\ref{L:connected.case.submetry.is.a.riemannian.submersion}, $\tilde{\pi}\colon X\to Z$ is a smooth Riemannian submersion.
Since $\pi_D\colon Z\to Y$ has discrete fibers, $\pi_D$ is a (Riemannian) covering map (see \cite[Theorem 1.2]{Lange2020}). Since $Y$ is simply-connected, the covering $\pi_D\colon Z\to Y$ is trivial. Thus, $Z = Y$ and $\pi_D$ is an isometry. Hence, $\pi = \tilde{\pi}$ is a smooth Riemannian submersion with connected fibers (see also \cite{LytchakWilking2024}). In particular, $\pi\colon X\to Y$ is a fiber bundle with fiber $F$ a single round sphere or real projective space.
Since $Y$ is contractible, the bundle is trivial and $X$ must be homeomorphic to $Y\times F$.
\end{proof}

\begin{lem}
If the fibers of $\pi\colon X\to Y$ are possibly disconnected and $Y$ is $S^k$ or $\RP^k$, then for~${k\geq 2}$ either each fiber has exactly two connected components and $Y$ is isometric to a~round~$\RP^{k}$, or each fiber is connected and $Y$ is isometric to a round $\Sp^k$ or a round $\RP^k$. For~${k=1}$, the fibers can have $m\geq 1$ connected components.
\end{lem}

\begin{proof}
Factor $\pi=\pi_D\circ\tilde{\pi}$ with $\tilde{\pi}\colon X\to Z$ a submetry with connected fibers, and $\pi_D\colon Z\to Y$ a~submetry with discrete fibers.
Arguing as in the proof of Lemma~\ref{L:disconnected.fibers.Y.is.Rk}, each fiber of $\tilde{\pi}$ is isometric to a round sphere or a round real projective space and $\pi_D\colon Z\to Y$ is a Riemannian covering map with $Y$ isometric to a round $S^k$ or a round $\RP^k$.
As $Y$ is connected, the number of sheets of the covering~$\pi_D$ is constant. Therefore, the number of connected components of the fibers of~$\pi$ is constant on $Y$.

If $k\geq 2$, then either $Z=Y = S^k$ and $\pi_D$ is an isometry, or $Z=S^k$ and $Y=\RP^k$ with $\pi_D$ the standard two-fold covering.
If $\pi_D$ is an isometry, then $\pi = \tilde{\pi}$ and the fibers are connected.
Hence, $\pi = \tilde{\pi}$ is a smooth Riemannian submersion. In particular, $\pi\colon X\to Y$ is a fiber bundle, with fiber a round sphere or real projective space and base a sphere. If $\pi_D$ is the two-fold cover $S^k\to \RP^k$, then for each $y\in Y$, $\pi^{-1}_D$ consists of two points and $\pi^{-1}(y)$ has exactly two connected components.

If $k=1$, in addition to the identity $S^1\to S^1$ and the two-fold covering $S^1\to \RP^1$, one must also consider the standard $m$-fold Riemannian coverings $S^1\to S^1$ given by cyclic rotation groups of order $m$.
\end{proof}

\subsection*{Acknowledgements}
We thank Alexander Lytchak and Marco Radeschi for helpful comments on a preliminary version of this article. We thank the organizers of the IV joint meeting of RSME and SMM, and the Universidad Polit\'ecnica de Valencia, for their hospitality while this manuscript was finished. We thank the anonymous referees for suggestions that improved both clarity and accuracy. In particular, we are grateful to one of the referees for observations that helped strengthen Theorems~\ref{T:MAIN_THM_SRF} and \ref{T:MAIN_THM} and simplify their proofs, for pointing out the argument in the proof of Proposition~\ref{P: Iso(Y) closed implies Iso(X,F) closed}, and for proposing the question highlighted in Remark~\ref{R:classification.problem}.
D.~Corro was supported in part by UNAM-DGAPA Postdoctoral fellowship of the Institute of Mathematics, and by the DFG (grant CO 2359/1-1, Priority Programme SPP2026 ``Geometry at Infinity''), and by a UKRI Future Leaders Fellowship [grant number MR/W01176X/1; PI: J Harvey].
F.~Galaz-Garc\'ia was supported in part by the DFG (grant GA 2050 2-1, Priority Programme SPP2026 ``Geometry at Infinity'').

\pdfbookmark[1]{References}{ref}
\LastPageEnding

\end{document}